\begin{document}
\theoremstyle{plain}
\newtheorem{thm}{Theorem}[section]
\newtheorem{prop}[thm]{Proposition}
\newtheorem{lem}[thm]{Lemma}
\newtheorem{cor}[thm]{Corollary}
\newtheorem{deft}[thm]{Definition}
\newtheorem{hyp}{Assumption}
\newtheorem*{KSU}{Theorem (Kenig, Sj\"ostrand and Uhlmann)}

\theoremstyle{definition}
\newtheorem{rem}[thm]{Remark}
\numberwithin{equation}{section}
\newcommand{\eps}{\varepsilon}
\renewcommand{\d}{\partial}
\newcommand{\dd}{\mathrm{d}}
\newcommand{\e}{\mathrm{e}}
\newcommand{\re}{\mathop{\rm Re} }
\newcommand{\im}{\mathop{\rm Im}}
\newcommand{\ch}{\mathop{\rm ch}}
\newcommand{\R}{\mathbf{R}}
\newcommand{\C}{\mathbf{C}}
\renewcommand{\H}{\mathbf{H}} 
\newcommand{\N}{\mathbf{N}} 
\newcommand{\D}{\mathcal{C}^{\infty}_0} 
\newcommand{\supp}{\mathop{\rm supp}}

\newcommand{\tre}{\textcolor{red}}
\newcommand{\tbl}{\textcolor{blue}}
\hyphenation{pa-ra-met-ri-zed}

\title[]{Stability estimates for the Magnetic Schr\"odinger operator with partial measurements}
\author[]{Leyter Potenciano-Machado, Alberto Ruiz \and Leo Tzou}
\address{University of Jyv\"askyl\"a, Department of Mathematics and Statistics, PO Box 35, 40014, Finland}
\email{leyter.m.potenciano@jyu.fi \qquad leymath@gmail.com}
\address{Departamento de Matem\'aticas, Universidad Aut\'onoma de Madrid, Campus de Cantoblanco, 28049 Madrid, Spain}
\email{alberto.ruiz@uam.es}
\address{School of Mathematics and Statistics, University of Sydney, NSW 2006, Australia}
\email{leo.tzou@gmail.com}

\begin{abstract}
In this article, we study stability estimates when recovering magnetic fields and electric potentials in a simply connected open subset in $\mathbb{R}^n$ with $n\geq 3$, from measurements on open subsets of its boundary. This inverse problem is associated with a magnetic Schr\"odinger operator. Our estimates are quantitative versions of the uniqueness results obtained by D.~Dos Santos Ferreira,  C.~E.~Kenig, J.~Sj\"ostrand and G.~Uhlmann in \cite{DSFKSjU}. The moduli of continuity are of logarithmic type.
\end{abstract}
\maketitle
\setcounter{tocdepth}{1} 
\tableofcontents

\section{Introduction}

Let $\Omega\subset \mathbb{R}^n$ ($n\geq 3$) be a bounded open and connected set with smooth boundary $\partial\Omega$. We consider the following magnetic Schr\"odinger operator:
\[
\mathcal{L}_{A,q}: = D^2 + A\cdot D + D\cdot A + A^2 +q,
\]
where  $A^2=A\cdot A$, $D=-i \nabla$, $D^2=D\cdot D$, and $A=\left(  A_j \right)_{j=1}^{n}\in W^{1,\infty}\left(\Omega; \mathbb{C}^n \right)$ and $q\in L^{\infty}\left( \Omega; \mathbb{C} \right) $ denote the magnetic and electric potentials, respectively. Throughout this manuscript, we consider $A$ as a $1$-form and then $dA$ as a $2$-form, more precisely: 
\[
A=\sum_{j=1}^{n}A_jdx_j \quad \text{and}\quad dA= \sum_{1\leq j<k\leq n}\left( \partial_{x_j}A_k-\partial_{x_k}A_j  \right) d_{x_j}\wedge d_{x_k}.
\]

The inverse boundary value problem (IBVP) under consideration in this article is to recover information (in $\Omega$) about the magnetic field $dA$ and the electric potential $q$ from voltage and current measurements on open subsets of $\partial \Omega$. To describe our results, we denote by $F$ and $B$ two arbitrary and nonempty subsets of $\partial\Omega$. The local boundary measurements are captured in the partial Dirichlet-Neumann (DN) map:
\[
\begin{matrix}
 \Lambda_{A,q}^{B\rightarrow F}:&H^{1/2}(B) &\rightarrow&H^{-1/2}(\partial\Omega) \\ 
 &f& \mapsto&  (\nu\cdot (\nabla+ i\, A)u_f)|_{F}, 
\end{matrix}
\]
where $\nu(x)$ is the outer unit normal of $x\in \partial\Omega$, the set $H^{1/2}(B)$ denotes the space consisting of all $f\in H^{1/2}(\partial\Omega)$ such that $\supp(f)\subset B$ and $u_f\in H^1(\Omega)$ solves the equation $\mathcal{L}_{A,q}\,u=0$ in $\Omega$ with $u|_{\partial\Omega}=f$. The existence and the uniqueness of solutions is ensured if, for instance, we assume $0$ to be not a Dirichlet $L^2(\Omega)$-eigenvalue of $\mathcal{L}_{A,q}$. 
According to the choice of the sets $F$ and $B$, we can mainly distinguish two classes of IBVPs:

\begin{itemize}
\item Full data. When $F=B=\partial\Omega$.
\item Partial data. Either, $F\neq \partial\Omega$ or $B\neq  \partial \Omega$ are nonempty sets.
\end{itemize}

The DN map associated to full data cases has a gauge invariance \cite{Sun}. In fact, if $\varphi\in C^1(\overline{\Omega})$ is a real-valued function with $\varphi |_{\partial\Omega}=0$, then one has $\Lambda^{\partial\Omega\rightarrow \partial\Omega}_{A,q}= \Lambda^{\partial\Omega\rightarrow \partial\Omega}_{A+\nabla \varphi, q}$. It shows that the DN map does not distinguish perturbations in gradient forms of the magnetic potentials. Hence we only  expect to recover $dA$ and $q$, which is what actually happens, see for instance \cite{Sun}, \cite{Tz}.\\

The framework of our work is contained in the category of IBVPs with partial data. Throughout this manuscript, we consider $B=\partial \Omega$ and $F$ being an open neighborhood of the illuminated boundary region of $\partial \Omega$ defined by
\begin{equation}\label{cset}
 \partial \Omega_{-,0}(x_0):= \left \{    x\in \partial\Omega  \; : \; \left \langle x-x_0, \nu(x) \right \rangle \leq 0 \right \},
\end{equation}
where $x_0\in \mathbb{R}^n\setminus ch(\overline{\Omega})$. Here $ch(\overline{\Omega})$ stands for the convex hull of $\overline{\Omega}$. Notice that if $\Omega$ is  strictly convex then $F $ could be arbitrarily small. We also consider the shadowed boundary region:
\begin{equation}\label{cset1}
 \partial \Omega_{+,0}(x_0):= \left \{    x\in \partial\Omega  \; : \; \left \langle x-x_0, \nu(x) \right \rangle \geq 0 \right \}.
\end{equation}

To define an appropriate DN map we introduce a boundary cutoff function $\chi_F$ supported on $F$ such that it equals to $1$ on $\partial \Omega_{-,0}(x_0)$. Thus, the partial DN map $\Lambda_{A,q}^{\partial\Omega\rightarrow F}:=\Lambda_{A,q}^{\sharp}$ is defined as \begin{equation}\label{DNmapsz1}
\begin{matrix}

\Lambda_{A,q}^{\sharp}:&H^{1/2}(\partial \Omega)  &\rightarrow&H^{-\frac{1}{2}}(\partial \Omega) \\ 
 &f& \mapsto&\chi_{F}\, \Lambda^{\partial\Omega\rightarrow \partial\Omega}_{A,q} f.
\end{matrix}
\end{equation}

Our first result states that the magnetic field and the electric potential are wholly determined (in $\Omega$) by $\Lambda_{A, q}^{\sharp}$, like to the full data case.
\begin{thm}\label{unique_our_result}
Let $A_j\in C^{1}(\Omega; \mathbb{C}^n)$ and $q_j\in L^{\infty}(\Omega)$ for $j=1,2$. Assume that $0$ is not a Dirichlet eigenvalue in $L^2(\Omega)$ of $\mathcal{L}_{A_j, q_j}$. If
\[
\Lambda_{A_1, q_1}^{\sharp}f= \Lambda_{A_2, q_2}^{\sharp}f\quad \text{for all}\; f\in H^{1/2}(\partial\Omega),
\]
then $dA_1=dA_2$ and $q_1=q_2$ in $\Omega$.
\end{thm}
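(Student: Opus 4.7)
My plan is to follow the Carleman weight / CGO strategy of \cite{DSFKSjU}, adapted to the partial-data geometry via a boundary Carleman estimate in the spirit of Bukhgeim--Uhlmann. First I would derive the standard integral identity: using $\Lambda^\sharp_{A_1,q_1} = \Lambda^\sharp_{A_2,q_2}$ together with the fact that $\chi_F \equiv 1$ on the lit face $\partial\Omega_{-,0}(x_0)$, for solutions $u_j$ of $\mathcal{L}_{A_j,q_j} u_j = 0$ ($j=1,2$) one obtains a bulk--boundary identity schematically of the form
\[
\int_\Omega \bigl[(A_1 - A_2)\cdot \mathbf{J}(u_1,u_2) + (A_1^2 - A_2^2 + q_1 - q_2)\, u_1 \overline{u_2}\bigr]\, dx = \mathrm{BT},
\]
where $\mathbf{J}(u_1,u_2)$ is a sesquilinear current in $u_1$, $u_2$ and their gradients, and $\mathrm{BT}$ is a boundary term supported on (a neighborhood of) the shadowed face $\partial\Omega_{+,0}(x_0) = \{x \in \partial\Omega : \langle x-x_0,\nu\rangle \geq 0\}$.

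Next, I would construct complex geometric optics solutions based on the limiting Carleman weight $\varphi(x) = \log|x-x_0|$, which is admissible precisely because $x_0 \notin \ch(\overline{\Omega})$. These take the form $u_j = e^{(\mp\varphi + i\psi)/h}(a_j + h r_j)$, with $\psi$ a conjugate weight, $a_j$ smooth amplitudes solving a $\bar\partial$-type transport equation along geodesics of the conformally rescaled metric $|x-x_0|^{-2}|dx|^2$ (i.e., circular arcs through $x_0$), and remainders $r_j$ controlled in $L^2$ via an interior Carleman estimate with gain of one derivative.

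The pivotal step is a boundary Carleman estimate attached to $\varphi$, exploiting the sign of $\partial_\nu \varphi = \langle x - x_0, \nu\rangle / |x-x_0|^2$ on $\partial\Omega_{+,0}(x_0)$; this provides enough smallness in $h$ to absorb $\mathrm{BT}$ after inserting the CGOs. Letting $h \to 0$, the interior identity reduces at leading order to the vanishing of an attenuated non-Abelian ray transform of $A_1 - A_2$ and $q_1 - q_2$ along circular arcs through $x_0$; one then inverts this transform as in \cite{DSFKSjU}, using the $\bar\partial$-amplitude equation together with the freedom to vary $x_0$ over an open subset of $\mathbb{R}^n \setminus \ch(\overline{\Omega})$, obtaining $d(A_1 - A_2) = 0$ and, after gauge fixing, $q_1 = q_2$. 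The principal obstacle I anticipate is making the boundary Carleman estimate robust enough to absorb $\mathrm{BT}$ despite the degeneracy of $\partial_\nu \varphi$ at the grazing locus $\{\langle x-x_0,\nu\rangle = 0\}$ separating $F$ from its complement, while still retaining CGO amplitudes whose asymptotics couple cleanly to an \emph{injective} ray transform on spheres through $x_0$ and are compatible with recovering only the gauge-invariant quantity $d(A_1 - A_2)$.
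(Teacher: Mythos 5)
Your proposal is sound, but it takes a genuinely different route from the paper's own proof of Theorem \ref{unique_our_result}: in the paper, uniqueness is obtained in one line as an immediate corollary of the quantitative results, Theorems \ref{SMP} and \ref{SEP}, since the logarithmic right-hand sides tend to zero as $\left\|\Lambda_1^\sharp-\Lambda_2^\sharp\right\|\to 0$. What you outline is essentially the original qualitative argument of \cite{DSFKSjU} (and \cite{KSU} when $A\equiv 0$): the integral identity, CGO solutions built on the limiting Carleman weight $\log|x-x_0|$, the boundary Carleman estimate to absorb the contribution of $\partial\Omega_{+,0}(x_0)$, and injectivity of a small-attenuation geodesic ray transform. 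This is exactly the machinery the paper develops, in quantitative form, in Sections 3--5 and the Appendix, so the overlap is substantial; the trade-off is that your direct route needs neither the a priori admissibility classes, nor the boundary conditions $A_1=A_2$ and $\partial_\nu A_1=\partial_\nu A_2$ on $\partial\Omega$, nor the quantitative unique-continuation step (Lemma \ref{lemma_estension_caro_ruiz_DSF}), because qualitatively the vanishing of the transform for small attenuation extends to all $\lambda$ by analyticity of the Fourier transform of compactly supported functions; in particular it genuinely covers the $C^1$/$L^\infty$ statement as written, whereas the corollary-from-stability route strictly yields it within the admissible classes. The paper's route, conversely, gets uniqueness for free once the (harder) stability theorems are in place, which is the actual point of the article. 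Three caveats on your sketch: (i) the object that arises is the scalar attenuated geodesic ray transform on hemispheres of $S^{n-1}$ acting on a function plus a $1$-form (the data being organized by two-planes through $x_0$ together with a Fourier transform in $\log|x-x_0|$), not a non-Abelian transform along circular arcs through $x_0$, and a single fixed $x_0$ suffices --- no variation of $x_0$ is needed; (ii) the magnetic amplitude enters through the factor $e^{\Phi_1+\overline{\Phi}_2}$ solving a $\overline{\partial}$-equation, and removing this exponential is a separate, nontrivial holomorphic-extension argument (this is precisely what \cite{DSFKSjU} do and what Theorem \ref{remov_esti_phi_1} quantifies here), which your plan should state explicitly rather than fold into ``inverting the transform as in \cite{DSFKSjU}''; (iii) recovering $q_1=q_2$ requires first exploiting $dA_1=dA_2$ together with the gauge invariance of the DN map via a Hodge decomposition, which is what your ``gauge fixing'' must amount to. None of these is a fatal gap, since each step is supplied by the reference you invoke, but they are the places where the schematic plan hides the real work.
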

This result is not really new. It was proved in \cite{DSFKSjU}-\cite{Ch} by assuming $A$ to be a $C^2$ vector field and $q$ bounded, and when $A\in W^{1,n}\cap L^\infty$ and $q\in L^n$ in \cite{KS}. It is also the aim of this work to provide the corresponding quantitative versions of Theorem \ref{unique_our_result}. To derive stability results, one needs \textit{a priori} bounds on the potentials to control their high oscillations. According to \cite[Proposition 3.6]{SiW}, see also \cite[Lemma 1.1]{FRo}, it is known that the characteristic function of $\Omega$ (denoted by $\chi_\Omega$) belongs to $H^{\sigma}(\mathbb{R}^n)$ with $\sigma\in (0,1/2)$, whenever $\partial \Omega$ is smooth enough, as in our case. Motivated by this fact, we consider the class of admissible potentials. 
\begin{deft} \label{adclmp} Given $M>0$ and $\sigma\in\left( 0,1/2\right)$, we define the {\bf class of admissible magnetic potentials} $\mathscr{A}(\Omega, M, \sigma)$ by
\[
\mathscr{A}(\Omega, M, \sigma)= \left \{ W \in  C^{1+\sigma}(\mathbb{R}^n; \mathbb{C}^n) : \supp{W}\subset \overline{\Omega},  \left \| W \right \|_{C^{1+\sigma}} \leq M \right \}.
\]
\end{deft}

\begin{deft} \label{adclmp1} Given $M>0$ and $\sigma\in\left( 0,1/2\right)$, we define the {\bf class of admissible electric potentials} $\mathscr{Q}(\Omega, M, \sigma)$ by 
\[
\mathscr{Q}(\Omega, M, \sigma)= \left \{ V \in  L^\infty \cap H^\sigma (\mathbb{R}^n; \mathbb{C})  : \supp{V}\subset \overline{\Omega},  \left \| V \right \|_{L^\infty} + \left \|  V \right \|_{H^\sigma}\leq M \right \}.
\]
\end{deft}

 For a function $h:\Omega\rightarrow \mathbb{C}$ (or $\mathbb{C}^n$), we denote by $\chi_{\Omega} h$ its extension by zero out of $\Omega$. Throughout this paper, we write $a\lesssim b$ whenever $a$ and $b$ are non-negative quantities that satisfy $a\leq C b$ for a constant $C > 0$ only depending on $\Omega$ and a-priori assumptions on the potentials. 
 
\begin{thm}\label{SMP}
Let $M>0$, $\sigma\in (0,1/2)$ and $j=1,2$. Assume that $0$ is not a Dirichlet eigenvalue in $L^2(\Omega)$ of $\mathcal{L}_{A_j, q_j}$. Then there exists $C>0$ (depending on $n, \Omega, M, \sigma$) such that the following estimate 
\[
\left \|d  A_1-d A_2   \right \|_{L^2(\Omega)} \leq C \left | \log \left | \log  \left \|\Lambda^{\sharp}_{1}- \Lambda^{\sharp}_{2}    \right \| \right |   \right |^{-\frac{\sigma}{3(1+\sigma)}}
\]
holds for all $q_j\in L^{\infty}(\Omega)$ and for all $\chi_{\Omega}A_j\in \mathscr{A}(\Omega, M, \sigma)$ satisfying $A_1=A_2$ and $\partial_\nu A_1=\partial_\nu A_2$ on $\partial\Omega$.
\end{thm}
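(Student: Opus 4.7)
The plan is to make quantitative the Complex Geometric Optics (CGO) argument of Dos Santos Ferreira--Kenig--Sj\"ostrand--Uhlmann \cite{DSFKSjU} underpinning Theorem \ref{unique_our_result}. The starting point is the standard Alessandrini--type identity obtained by pairing two solutions $u_j \in H^1(\Omega)$ of $\mathcal{L}_{A_j,q_j} u_j = 0$: integration by parts yields
\begin{equation*}
\int_\Omega \bigl[(A_1 - A_2)\cdot(u_1 D\bar u_2 + \bar u_2 D u_1) + (A_1^2 - A_2^2 + q_1 - q_2) u_1 \bar u_2\bigr]\,dx = \bigl\langle(\Lambda^\sharp_1 - \Lambda^\sharp_2) f_1, \bar u_2\bigr\rangle + \mathcal{B}(u_1, u_2),
\end{equation*}
where $\mathcal{B}(u_1,u_2)$ is a boundary pairing supported on the unobservable part $\partial\Omega\setminus F$, which lies inside (a neighborhood of) the shadowed face $\partial\Omega_{+,0}(x_0)$. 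The first step is to absorb $\mathcal{B}$ via a boundary Carleman estimate with logarithmic weight $\varphi(x) = \log|x-x_0|$, the natural limiting Carleman weight associated to $x_0\notin\mathrm{ch}(\overline\Omega)$; the sign of $\partial_\nu\varphi$ on $\partial\Omega_{+,0}$ makes the boundary term absorbable, at the cost of an exponential factor $e^{C\tau}$ multiplying $\|\Lambda^\sharp_1-\Lambda^\sharp_2\|$ in the final bound.

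For the bulk term, the next step is to construct CGO solutions attached to $\varphi$. Working in the M\"obius/hyperbolic coordinates that normalise $\varphi$, one selects a harmonic conjugate $\psi$ and builds solutions of the form
\begin{equation*}
u_1 = e^{\tau(\varphi+i\psi)}\bigl(a_1(x;\xi') + \tau^{-1}r_1\bigr),\qquad u_2 = e^{-\tau(\varphi-i\psi)}\bigl(a_2(x;\xi') + \tau^{-1}r_2\bigr),
\end{equation*}
with explicit control $\|r_j\|_{L^2(\Omega)} = O(1)$ and amplitudes $a_j$ carrying a transverse oscillation $e^{\pm i\xi'\cdot y}$ along the level sets of $\varphi$. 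Inserted into the identity, the Carleman factors cancel and the leading bulk term becomes a Fourier/ray--transform pairing with $A_1 - A_2$ and $q_1 - q_2$. An integration by parts, justified by the boundary hypothesis $A_1=A_2,\ \partial_\nu A_1=\partial_\nu A_2$ (so no boundary contributions arise), converts this into a pairing with $dA_1 - dA_2$. Combined with the boundary control above, this produces a pointwise frequency estimate of the schematic form
\begin{equation*}
\bigl|\mathcal{F}\bigl(\chi_\Omega(dA_1-dA_2)\bigr)(\xi)\bigr| \lesssim e^{C\tau}\,\|\Lambda^\sharp_1-\Lambda^\sharp_2\| + \tau^{-\alpha},
\end{equation*}
valid for $\xi$ in a set whose radius is only logarithmic in $\tau$: because $\varphi$ is bounded on $\overline\Omega$, the effective phase available in the amplitude construction grows like $\log\tau$ rather than $\tau$.

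With this estimate in hand, recovering $dA_1 - dA_2$ in $L^2(\Omega)$ is a frequency--split argument. On $|\xi|\le R$ the previous display is used; on $|\xi|>R$ the a priori inclusion $\chi_\Omega A_j \in \mathscr{A}(\Omega,M,\sigma) \subset C^{1+\sigma}$ gives an $H^{1+\sigma}$ bound on $\chi_\Omega(dA_1-dA_2)$ and thus a high--frequency tail bounded by $R^{-2(1+\sigma)}M^2$. Choosing $R = c\log\tau$ (the admissible range from the CGO construction) and then optimising $\tau$ against $\varepsilon := \|\Lambda^\sharp_1-\Lambda^\sharp_2\|$ by setting $\tau \sim \log(1/\varepsilon)$ yields the stated double--logarithmic rate, the exponent $\sigma/3(1+\sigma)$ absorbing the $C^{1+\sigma}$--to--$H^{1+\sigma}$ interpolation loss together with the cubic dependence inherited from the two successive logarithmic optimisations (inner log from the Carleman/frequency trade, outer log from the $R=\log\tau$ constraint).

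The main obstacle is the boundary Carleman estimate absorbing $\mathcal{B}$ on the shadowed face: because $\varphi$ is bounded on $\overline\Omega$, the exponential gap between observable and unobservable regions is only uniformly bounded, so one must work with a convexified weight $\varphi_\lambda = \varphi + \lambda\varphi^2/2$ and carefully track the interplay between the convexification parameter $\lambda$, the Carleman parameter $\tau$, the cutoff $\chi_F$, and the unavoidable loss coming from extending $f$ from $B=\partial\Omega$ to a class of CGO boundary traces. It is exactly this step that forces the effective large parameter in the Fourier variable to be $\log\tau$ rather than $\tau$, thereby degrading the single--logarithmic stability familiar from the full--data case into the double--logarithmic bound of Theorem \ref{SMP}.
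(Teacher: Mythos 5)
Your overall scaffolding (Alessandrini identity, boundary Carleman estimate with the limiting weight $\log|x-x_0|$ to control the shadowed face at the cost of a factor $e^{C\tau}$, CGO solutions $e^{\tau(\varphi+i\psi)}(a+r)$, and the choice $\tau\sim\log(1/\|\Lambda_1^\sharp-\Lambda_2^\sharp\|)$) matches the paper, but the core analytic step is misidentified and the argument as written has a genuine gap. The CGO construction attached to the logarithmic weight does \emph{not} deliver a pointwise bound on $\mathcal{F}\bigl(\chi_\Omega(dA_1-dA_2)\bigr)(\xi)$ on a ball of radius $\log\tau$, and you give no mechanism for such an estimate beyond the heuristic that $\varphi$ is bounded. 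What the construction actually yields, after passing to the coordinates $x=e^t\gamma_{y,\eta}(\theta)$ and choosing $a_0=e^{i\lambda z}$, is the \emph{attenuated geodesic ray transform} on $S^{n-1}_{>\beta'}$ of a function-plus-one-form pair built from the partial Fourier transform (in $t=\log|x|$) of $A_1-A_2$. This transform has a nontrivial gauge kernel ($T_\lambda(-\lambda\ss+d\ss)=0$), so one must pass to a solenoidal Hodge decomposition and use ellipticity of the normal operator $T_\lambda^*T_\lambda$ to get an $H^{-1}$ bound, and that stability is only available for \emph{small} attenuations $|\lambda|\le\lambda_0$ with $\lambda_0$ fixed independently of $\tau$ (Theorem \ref{stability_estimates_st_ra_tr}). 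The second logarithm in the theorem then comes from quantitative unique continuation of the Fourier transform in the attenuation variable, extending the information from $[-\lambda_0,\lambda_0]$ to all of $\mathbb{R}$ (Lemma \ref{lemma_estension_caro_ruiz_DSF}), followed by interpolation with the a priori $H^\sigma$ bounds; it is not produced by a frequency-splitting at $R=c\log\tau$, and your proposed split has nothing to invert on the low-frequency region.

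A second, magnetic-specific obstruction is entirely absent from your sketch: the transport equation for the amplitudes is a $\overline\partial$-equation involving $A_j$, so the amplitudes carry an unknown-dependent factor $e^{\Phi_1+\overline{\Phi}_2}$ (Cauchy transform of $(A_t+iA_\theta)$), and the bulk term you obtain is a pairing against $e^{i\Phi}(A_1-A_2)_{t,\theta}$, not against $A_1-A_2$ itself. Removing this exponential in a quantitative way — without the asymptotic $|z|\to\infty$ argument available for globally defined coordinates — is one of the main difficulties of the problem and occupies the paper's entire Appendix (a quantitative holomorphic-extension argument on $S^1$ via Fourier coefficient bounds, Lemma \ref{quantification_complex_argument} and Theorem \ref{remov_esti_phi_1}). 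Without this step, and without the solenoidal decomposition and small-attenuation inversion described above, the passage from your schematic frequency estimate to $\|dA_1-dA_2\|_{L^2(\Omega)}$ does not go through. (Also, minor points: the paper uses the boundary conditions $A_1=A_2$, $\partial_\nu A_1=\partial_\nu A_2$ to build a $C^{1+\sigma}$ compactly supported extension of $A_1-A_2$ to a larger ball $B$, not to kill boundary terms in an integration by parts; and no convexified weight $\varphi+\lambda\varphi^2/2$ is needed — the Carleman estimate of Proposition \ref{PCe} is used as is.)
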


 \begin{thm}\label{SEP}
Let $M>0$, $\sigma\in (0,1/2)$ and $j=1,2$. Assume that $0$ is not a Dirichlet eigenvalue in $L^2(\Omega)$ of $\mathcal{L}_{A_j, q_j}$. Then there exist $C>0$ (depending on $n, \Omega, M, \sigma$) such that the following estimate
\[
\left \| q_1-q_2  \right \|_{L^2(\Omega)} \leq C \left | \log \left |  \log \left | \log  \left \|\Lambda^{\sharp}_{1}- \Lambda^{\sharp}_{2}    \right \| \right |  \right | \right |^{-\frac{\sigma}{3(\sigma+1)}} 
\]
holds for all $\chi_{\Omega}q_j \in \mathscr{Q}(\Omega, M,\sigma)$ and for all $\chi_{\Omega}A_j \in \mathscr{A}(\Omega, M, \sigma)$ satisfying $A_1=A_2$ and $\partial_\nu A_1=\partial_\nu A_2$ on $\partial\Omega$. 
\end{thm}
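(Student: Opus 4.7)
The plan is to follow the classical Alessandrini-type strategy adapted to the partial-data setting of \cite{DSFKSjU}, using the quantitative magnetic stability of Theorem \ref{SMP} as a black box to absorb the magnetic corrections. First I would polarize the difference $\Lambda^{\sharp}_{1}-\Lambda^{\sharp}_{2}$ on suitable solutions $u_{1}, u_{2}$ of $\mathcal{L}_{A_{1},q_{1}}u_{1}=0$ and $\mathcal{L}_{\bar A_{2},\bar q_{2}}u_{2}=0$ to obtain the standard integral identity
\[
\int_{\Omega}\bigl[(q_{1}-q_{2})+(A_{1}-A_{2})\cdot(\text{1st-order terms in }u_{1},u_{2})+(A_{1}^{2}-A_{2}^{2})u_{1}\overline{u_{2}}\bigr]\,dx = \langle (\Lambda^{\sharp}_{1}-\Lambda^{\sharp}_{2})f_{1}, f_{2}\rangle + (\text{terms on }\partial\Omega\setminus F),
\]
where the unmeasured boundary contributes a term that must be suppressed by a Carleman estimate with boundary weight, exactly as in the magnetic step leading to Theorem \ref{SMP}. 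Since $A_{1}=A_{2}$ and $\partial_{\nu}A_{1}=\partial_{\nu}A_{2}$ on $\partial\Omega$ and $dA_{1}-dA_{2}$ is already controlled in $L^{2}(\Omega)$, I would construct a gauge function $\varphi$ (e.g., by solving $\Delta \varphi = \mathrm{div}(A_{1}-A_{2})$ with zero boundary data) so that $\|A_{1}-A_{2}-\nabla\varphi\|_{L^{2}}\lesssim\|d(A_{1}-A_{2})\|_{L^{2}}$ with $\varphi|_{\partial\Omega}=0$; conjugating $u_{1}$ by $e^{-i\varphi}$ then turns the $A$-differences into the smallness of $\|dA_{1}-dA_{2}\|_{L^{2}}$ already bounded by Theorem \ref{SMP}, up to a controlled commutator.

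Next I would substitute CGO solutions of the form $u_{j}=e^{(s\rho+i\psi)/h}(a_{j}+hr_{j})$ associated with the limiting Carleman weight $\rho(x)=\log|x-x_{0}|$, as in Dos Santos Ferreira--Kenig--Sj\"ostrand--Uhlmann. The amplitudes $a_{j}$ are chosen to satisfy transport equations along the integral curves of $\nabla\rho$, and the remainder $r_{j}$ is controlled in $L^{2}$ via the Carleman estimate of the appendix-level tools already used for Theorem \ref{SMP}. Choosing frequencies $\xi$ orthogonal to the span of $\nabla\rho$ and $\nabla\psi$ and letting $h\to 0$ after a stationary phase computation extracts the Fourier transform of $q_{1}-q_{2}$ against a product of exponentials; schematically,
\[
|\widehat{\chi_{\Omega}(q_{1}-q_{2})}(\xi)|\;\lesssim\;e^{C|\xi|/h}\,\|\Lambda^{\sharp}_{1}-\Lambda^{\sharp}_{2}\|\;+\;\|A_{1}-A_{2}-\nabla\varphi\|_{L^{2}}\;+\;C_{\xi}h,
\]
uniformly for $\xi$ in a suitable open cone; the cone restriction is removed, as in \cite{DSFKSjU}, by varying $x_{0}$.

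Finally, splitting $\|q_{1}-q_{2}\|^{2}_{L^{2}}$ into low and high frequency parts at threshold $R>0$, the high-frequency part is controlled by the $H^{\sigma}$ a priori bound $\|\chi_{\Omega}q_{j}\|_{H^{\sigma}}\le M$ via $R^{-2\sigma}$, while the low-frequency part is estimated by the Fourier bound above integrated over $|\xi|\le R$. Optimizing first in $h$ (balancing $e^{CR/h}\|\Lambda^{\sharp}_{1}-\Lambda^{\sharp}_{2}\|$ against $h$) and then in $R$ yields a bound of the form $\|q_{1}-q_{2}\|_{L^{2}}\lesssim |\log\|\Lambda^{\sharp}_{1}-\Lambda^{\sharp}_{2}\||^{-\alpha}+\|dA_{1}-dA_{2}\|_{L^{2}}^{\beta}$, and plugging in the double-logarithmic bound from Theorem \ref{SMP} turns the magnetic remainder into the announced triple log. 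The main obstacle is the gauge step: ensuring that the auxiliary function $\varphi$ can be produced so that $\|A_{1}-A_{2}-\nabla\varphi\|_{L^{2}}$ is genuinely bounded by $\|dA_{1}-dA_{2}\|_{L^{2}}$ and that the conjugation $u_{1}\mapsto e^{-i\varphi}u_{1}$ does not spoil the partial-data Carleman estimate, since $\varphi$ itself is only as regular as what the $C^{1+\sigma}$ class of $A$ allows and only vanishes on $\partial\Omega$, not to infinite order; the matching boundary conditions $A_{1}=A_{2}$, $\partial_{\nu}A_{1}=\partial_{\nu}A_{2}$ are exactly what makes this step work quantitatively.
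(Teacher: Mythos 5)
Your overall skeleton (integral identity, a gauge/Hodge step so that only $A_1-A_2-\nabla\omega$ and hence $\|d(A_1-A_2)\|$ enters, CGO solutions with the limiting Carleman weight $\log|x-x_0|$, boundary control via the Carleman estimate, and finally importing Theorem \ref{SMP}) is the same as the paper's. The genuine gap is in the step where you claim that a stationary phase computation on these partial-data CGOs ``extracts the Fourier transform of $q_1-q_2$'' for $\xi$ in an open cone, with the cone removed by varying $x_0$. With the weight $\varphi=\log|x-x_0|$ and $\psi=d_{S^{n-1}}(y,x/|x|)$ the product $u_1\overline{u}_2$ carries no linear phase at all: the only oscillation available comes from the holomorphic amplitude $a_0=e^{i\lambda z}$, $z=\log|x|+i\theta$, so what you actually recover is $e^{i\lambda\log|x|}e^{-\lambda\theta}$ integrated over geodesics, i.e.\ the attenuated geodesic ray transform $T_\lambda\mathcal{Q}_\lambda$ on the hemisphere (Fourier transform only in the radial log-variable, exponential attenuation in the angular variable). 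Its stable inversion (Theorem \ref{stability_estimates_dossantoscaroruiz129}) is available only for small attenuations $|\lambda|\le\lambda_0$, and passing from $|\lambda|\le\lambda_0$ to all of $\mathbb{R}$ is done through the quantitative unique continuation of the Fourier transform (Lemma \ref{lemma_estension_caro_ruiz_DSF}); this is exactly where the third logarithm in the statement comes from. Your proposed substitute --- ``remove the cone restriction by varying $x_0$'' --- is the analytic-microlocal/support-theorem step of the qualitative uniqueness proof in \cite{DSFKSjU}; making it quantitative is a nontrivial result in itself (this is the content of the restricted Radon/geodesic transform stability of \cite{CDSFR}, \cite{CDSFR1}) and cannot be invoked for free. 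As written, your scheme has no valid input for the low/high frequency splitting, and indeed its output (a single log plus a power of the double log from Theorem \ref{SMP}) is inconsistent with the triple-log mechanism, which signals that the data-processing step has been misidentified.

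A secondary, fixable issue is the gauge step. An $L^2$ bound $\|A_1-A_2-\nabla\varphi\|_{L^2}\lesssim\|d(A_1-A_2)\|_{L^2}$ is not enough: the difference $A_1-A_2-\nabla\omega$ appears (via the transport equation) inside the exponential factor $e^{(\Phi_1+\overline{\Phi}_2)\circ\Psi_y+i\omega}$ multiplying $q_1-q_2$, and removing that exponential requires smallness of $\Phi_1+\overline{\Phi}_2+i\omega$ in $L^\infty(\Omega)$, hence an $L^\infty$ (Hölder) bound on $A_1-A_2-\nabla\omega$. The paper obtains this from the $W^{1,p}$ Hodge decomposition of \cite{Tz} (Lemma \ref{hd}), Morrey's inequality, and interpolation of $\|d(A_1-A_2)\|_{L^p}$ between the $L^2$ bound of Theorem \ref{SMP} and the a priori $L^{2(p-1)}$ bound; the boundary conditions $A_1=A_2$, $\partial_\nu A_1=\partial_\nu A_2$ enter there, as you correctly anticipated, but the norm in which the gauge error must be small is stronger than the one you propose.
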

 The notation $\left \| \, \cdot\, \right \|$ in above Theorem \ref{SMP} and Theorem \ref{SEP} stand for $\left \| \, \cdot\,  \right \|_{H^{1/2}(\partial\Omega)\rightarrow H^{-1/2}(\partial\Omega)}$, which in turn is defined in a standard way by 
\begin{equation}\label{partialnorm}
\left \| \Lambda^{\sharp}_{A,q} \right \|_{H^{1/2}(\partial\Omega)\rightarrow H^{-1/2}(\partial\Omega)} :=\underset{  \left \| f \right \|_{H^{1/2}(\partial\Omega)}=1}{\sup}  \left \| \chi_F \Lambda_{A,q} f   \right \|_{H^{-1/2}(\partial\Omega)}.
\end{equation}

We now describe some earlier results. In the absence of a magnetic potential, our results are closely connected with the widely studied Calder\'on's problem \cite{C}. It consists of recovering the interior conductivity of a body from electrical measurements on its whole boundary.  A weak version of this problem was first studied in \cite{BU}, where the authors proved that it is enough to take measurements on large subsets of the boundary for recovering the electric potential. Since the IBVPs are high ill-posed, it is not surprising to get moduli of logarithmic type continuities when obtaining stability estimates \cite{A}, \cite{HW}. These moduli are the best one expect to get, at least for full data cases and by the method of complex geometric solutions, as it was showed in \cite{Ma}. We have not attempted to be exhaustive in references related to IBVPs in other settings with incomplete information in the absence of magnetic potentials, and closely related inverse problems like reconstruction methods. We recommend \cite{KS1} for a more comprehensive bibliography and for a more gentle introduction to these issues. The first uniqueness result of a full data case was obtained for small magnetic potentials \cite{Sun}. This condition was removed in \cite{NSU} by assuming $C^2$ compactly supported magnetic potentials. See also \cite{DSFKSU} for uniqueness on manifolds. Uniqueness for full data was improved in \cite{KU}, only assuming bounded potentials. In this case, stability estimates were derived in \cite{CP}. \\
  
The proof of Theorem \ref{unique_our_result} is an immediate consequence of Theorem \ref{SMP} and Theorem \ref{SEP}. Theorem \ref{unique_our_result} was first proved in \cite{KSU} when $A\equiv 0$ and stability estimates were obtained in consecutive works in \cite{CDSFR} and \cite{CDSFR1} by proving proper bounds for the Radon transform and the attenuated geodesic ray transform. In the presence of a magnetic potential, Theorem \ref{unique_our_result} was proved in \cite{DSFKSjU}, \cite{KS}, and \cite{Ch}, under smoothness assumptions on the potentials. Our stability estimates in Theorem \ref{SMP} and Theorem \ref{SEP} are the quantification of the corresponding uniqueness results derived in \cite{DSFKSjU}, \cite{KS}, and \cite{Ch}. \\

The proofs of Theorem \ref{SMP} and Theorem \ref{SEP} will be carried out by deriving first an integral identity relating the partial boundary data, i.e., the partial DN maps, with the unknown magnetic and electric potentials. After that, we construct complex geometric optics (CGO) solutions $u\in H^1(\Omega)$ to $\mathcal{L}_{A,q}u=0$ in $\Omega$, through a suitable Carleman estimate for a conjugate version of $\mathcal{L}_{A,q}$. To obtain information about the potentials, we insert the CGO solutions into the first step's integral identity. Now we have to deal with boundary terms from the shadowed boundary region $ \partial \Omega_{+,0}(x_0)$ defined by \eqref{cset1}. We use another Carleman estimate with boundary terms, which, roughly speaking, gives controllability of the boundary terms coming from the shadowed region $ \partial \Omega_{+,0}(x_0)$  by similar terms from the illuminated part  $ \partial \Omega_{-,0}(x_0)$. This step might be set aside by using another CGO solutions vanishing on the shadow region of the boundary as in \cite{Ch}, but in counterpart, we shall need to increase the regularity on the magnetic potentials (slightly bigger than $C^2$). After introducing coordinates from $\Omega$ to a compact subset of $\mathbb{C}\times S^{n-2}$, we get information on the attenuated geodesic ray transform of certain functions involving the potentials. To decouple the information from the previous step, we establish estimates for the attenuated geodesic ray transform, which are valid for small real attenuations, see Theorem \ref{stability_estimates_st_ra_tr}. To transfer the information from small attenuations to the  whole real line, we have to add one logarithm in our estimates. This fact is closely connected with the quantification of Fourier transform's unique continuation, see Lemma \ref{lemma_estension_caro_ruiz_DSF}. In most of our computations, we will only need the magnetic potentials of class $C^1$. The extra regularity $C^{1+\sigma}$ is only needed to use the Fourier transform's unique continuation, see Lemma \ref{lemma_estension_caro_ruiz_DSF}. The conditions $A_1 = A_2$ and $\partial_\nu A_1=\partial_\nu A_2$ on $\partial\Omega$ are needed to extend $A_1-A_2$ to a slightly larger open set than $\Omega$ while preserving the $C^{1+\sigma}$ regularity. The situation is a bit different when proving Theorem \ref{SEP}. In this part, we take advantage of the DN map's gauge invariance to use the already established stability estimate for the magnetic fields, utilizing a Hodge decomposition for $A_1-A_2$ derived in \cite{Tz}. This step involves two logarithms coming from Theorem \ref{SMP}, and once again, an extra logarithm has to be added to extend the Fourier transform information from small attenuations to $\mathbb{R}$. \\

On the other hand, when a fixed direction in $S^{n-1}$ determines the illuminated boundary region, uniqueness was obtained  \cite{BU} to the case $A\equiv 0$, and in the presence of a magnetic potential in \cite{Tz}. The author in \cite{Tz} also obtained stability estimates of logarithmic type. They considered partial boundary information in open sets slightly larger than the half of $\partial\Omega$. Note that in our case, the boundary information can be taken in arbitrary small subsets of $\partial\Omega$ when $\Omega$ is convex. This geometric feature makes a huge difference in studying both IBVPs. One of the most noteworthy of which are the coordinates we work. The CGO solutions in the presence of a magnetic potential involve an exponential term, whose exponent satisfies a $\overline{\partial}$-equation. This equation can be solved by using the so-called Cauchy transform. The exponential term must be adequately removed from our estimates while keeping track of the constants' dependence on all intermediate estimates. In \cite{KU}, \cite{Sa1}, and \cite{Sun}, the coordinates introduced in $\Omega$ are globally defined, so it is possible to remove the exponential term by using the asymptotic behavior at infinity of the Cauchy transform. In particular, one can take limits when the spatial variable $ | x | \to \infty$ to remove the exponential term from the intermediate estimates. This argument does not work in our case because our approach uses geodesic coordinates (natural coordinates of the attenuated geodesic ray transform), which induces a variable living in the complex plane $\mathbb{C}$. We cannot use the previous asymptotic argument since we might intersect some branches where the complex logarithm function could not be well defined when taking the complex variable $| z | \to \infty$. To overcome this difficulty, in Lemma \ref{quantification_complex_argument} we derive a quantitative version of holomorphic extensions into the complex unit ball of functions initially defined in the complex unit sphere.\\

This paper is organized as follows. In Section \ref{preli}, we introduce the attenuated geodesic ray transform and its main properties. Section  \ref{c_g_o_sulzlo}  is devoted to constructing CGO solutions to the magnetic Schr\"odinger equation by using geodesic coordinates. In Section \ref{sectio_four_st} and Section \ref{sta_electri_pot}, we prove Theorem \ref{SMP} and Theorem \ref{SEP}, respectively. Finally, the Appendix is entirely dedicated to explaining how to remove the exponential term satisfactorily when proving stability estimates for the magnetic potentials; see Theorem \ref{remov_esti_phi_1}.



\section{Attenuated geodesic ray transform on hemispheres} \label{preli}

%

In this section, we prove the attenuated geodesic ray transform's continuity properties defined on a slightly smaller open subset than the upper half hemisphere. For $\hbar \in [0,1)$, let first define
\[
S^{n-1}_{>\hbar}= \left\{ x\in S^{n-1}\, : \, x_n>\hbar\right\}\subset \mathbb{R}^n. 
\]
Now consider $0<\beta^\prime<\beta <1$ and $S^{n-1}_{>\beta^\prime}$ provided with the metric induced by the canonical Euclidean metric. The set of boundary inward-pointing unit vectors to the unit sphere bundle 
 \[
S(S^{n-1}_{>\beta^\prime}) = \left\{ (y, \eta)\in TS^{n-1}_{>\beta^\prime}: |\eta|=1\right\}
\]
is given by 
\[
\partial_+ S (S^{n-1}_{>\beta^\prime})=\left\{ (y, \eta)\in S(S^{n-1}_{>\beta^\prime}) :  y \in \partial S^{n-1}_{>\beta^\prime}, \ \left \langle \eta, \nu(y) \right \rangle <0 \right\},
\]
where $\nu$ denotes the outer unit normal vector to $\partial S^{n-1}_{>\beta^\prime}$. Let $H$ be a complex-valued function defined on $S (S^{n-1}_{>\beta^\prime})$. The geodesic ray transform with real attenuation $-\lambda<0$ of $H$ at $(y, \eta)\in \partial_+ S(S^{n-1}_{>\beta^\prime})$ is defined by
\begin{equation}\label{defi_atte_geod_ray_trans}
(T_\lambda H)(y, \eta)= \int_0^{\tau(y, \eta)} e^{-\lambda \theta} H(\gamma_{y, \eta}(\theta), \dot{\gamma}_{y, \eta}(\theta)) \, d\theta,
\end{equation}
where $\tau(y, \eta)$ denotes the first arrival time to $\partial S^{n-1}_{>\beta^\prime}$ of the unit speed geodesic $\gamma_{y, \eta}$ starting at $y$ with initial velocity $\eta$. The upper dot $\dot{}:= \frac{d}{d\theta}$ represents the derivative of the variable $\theta$. 
\begin{rem} \label{rema_finite_measure}
Following \cite[Section 2.1]{CDSFR1}, we consider the following measure on $\partial_+S(S^{n-1}_{>\beta^\prime})$  
\[
d\mu (y, \eta) =| \left \langle \eta, \nu(y)\right \rangle| dy \, d\eta
\]
and the corresponding $L^2$-norm by
\begin{align*}
\left\| F \right\|_{L^2(\partial_+S(S^{n-1}_{>\beta^\prime}))}^2&= \int_{\partial_+S(S^{n-1}_{>\beta^\prime})} |F(y, \eta)|^2 d\mu(y, \eta)\\
& = \int_{\partial S^{n-1}_{>\beta^\prime}} \int_{y^\perp \cap S^{n-1}_{>0}}  |F(y, \eta)|^2|  \left \langle \eta, \nu(y) \right \rangle | d\eta \, dy. 
\end{align*}
\end{rem}
Using Santal\'{o}'s formula, see for instance \cite[Lemma A.8]{DPSU}:
\begin{equation}\label{Santalo_formula}
\int_{S(S^{n-1}_{>\beta^\prime})} F(x, \xi) dx\, d\xi= \int_{\partial_+ S (S^{n-1}_{>\beta^\prime})} \int_{0}^{\tau(y, \eta)} F \left(\gamma_{y, \eta}(\theta), \dot{\gamma}_{y, \eta}(\theta) \right) d\theta\, d\mu(y, \eta)
\end{equation}
one can deduce the continuity of $T_\lambda$ in Sobolev spaces. In particular, if $F$ only depends on the variable $x$, then
\[
\int_{S^{n-1}_{>\beta^\prime}} F(x) dx= \dfrac{1}{|S^{n-2}|}\int_{\partial_+ S (S^{n-1}_{>\beta^\prime})} \int_{0}^{\tau(y, \eta)} F \left(\gamma_{y, \eta}(\theta) \right) d\theta\, d\mu(y, \eta).
\]

\begin{lem}\label{continuity:attenuated_ray_transform_1}
Let $\lambda\in \mathbb{R}$ and $\sigma \in [0,1]$. The attenuated geodesic ray transform $T_\lambda$ defined by \eqref{defi_atte_geod_ray_trans} is a bounded operator from $H^\sigma (S(S^{n-1}_{>\beta^\prime}))$ to $H^\sigma(\partial_+S(S^{n-1}_{>\beta^\prime}))$.
\end{lem}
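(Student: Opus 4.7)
The plan is to establish the result at the two endpoints $\sigma=0$ and $\sigma=1$ and fill in the intermediate range by real interpolation. For $\sigma=0$, applying the Cauchy--Schwarz inequality to \eqref{defi_atte_geod_ray_trans} gives the pointwise bound
\[
|(T_\lambda H)(y,\eta)|^2 \leq \tau(y,\eta)\, e^{2|\lambda|\pi}\int_0^{\tau(y,\eta)} |H(\gamma_{y,\eta}(\theta),\dot\gamma_{y,\eta}(\theta))|^2\, d\theta,
\]
using that the exit time is uniformly bounded (since great-circle geodesics inside $S^{n-1}_{>\beta^\prime}$ have length at most $\pi$). Integrating against $d\mu$ on $\partial_+ S(S^{n-1}_{>\beta^\prime})$ and invoking Santaló's formula \eqref{Santalo_formula} converts the right-hand side into $\|H\|_{L^2(S(S^{n-1}_{>\beta^\prime}))}^2$, yielding the $L^2\to L^2$ continuity.

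For $\sigma=1$, differentiate under the integral sign. If $X$ is any smooth vector field tangent to $\partial_+ S(S^{n-1}_{>\beta^\prime})$, Leibniz's rule produces
\[
X(T_\lambda H)(y,\eta) = e^{-\lambda\tau}\, H\bigl(\gamma(\tau),\dot\gamma(\tau)\bigr)\, X\tau + \int_0^{\tau} e^{-\lambda\theta}\, dH_{(\gamma,\dot\gamma)}[J_X(\theta)]\, d\theta,
\]
where $J_X(\theta)$ is the variation vector (a Jacobi-type field) measuring how $(\gamma_{y,\eta}(\theta),\dot\gamma_{y,\eta}(\theta))$ depends on the initial data. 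On the compact, non-trapping manifold $S^{n-1}_{>\beta^\prime}$ the variation fields are uniformly bounded, so the integral term is handled by the Santaló argument above applied to $\nabla H$. The boundary term is controlled by the trace theorem on the unit sphere bundle, giving $\|H|_{\partial_- S}\|_{L^2}\lesssim \|H\|_{H^1(S(S^{n-1}_{>\beta^\prime}))}$.

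The main technical obstacle is the factor $X\tau$, which blows up on the glancing set $\{\langle\eta,\nu\rangle=0\}\subset\partial_+ S$. This is controlled by the strict convexity of $\partial S^{n-1}_{>\beta^\prime}$: a great circle $\gamma(t)=\cos(t)y+\sin(t)\eta$ tangent to the boundary at $(y,\eta)$ with $\eta\perp\nu$ has $n$th coordinate $\cos(t)\beta^\prime<\beta^\prime$ for $t\ne 0$, so it exits the cap immediately. Combined with the $|\langle\eta,\nu\rangle|$-weighting built into $d\mu$, this tames the singularity of $X\tau$ in the $L^2$ norm, as is standard for ray transforms on simple manifolds (cf.\ \cite{DPSU}). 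Adding the two contributions yields the $H^1\to H^1$ bound; real interpolation between $\sigma=0$ and $\sigma=1$ then delivers the claimed $H^\sigma\to H^\sigma$ continuity for every $\sigma\in[0,1]$.
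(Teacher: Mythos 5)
Your $\sigma=0$ case and the interpolation step coincide with the paper's argument. The gap is in the $\sigma=1$ case, specifically in your treatment of the boundary term $e^{-\lambda\tau}H(\gamma(\tau),\dot\gamma(\tau))\,X\tau$. The paper never has to face this term at all: it first reduces, by density, to $H\in C^\infty_c(S(S^{n-1}_{>\beta'}))$, and for compactly supported $H$ the integrand vanishes at the exit point, so the term coming from differentiating the upper limit $\tau(\Gamma(s))$ is identically zero; the rest of the proof is then only the Jacobi-field bound for $d\Theta_\theta$ plus Cauchy--Schwarz and Santal\'o, exactly as in your "integral term". You instead keep the boundary term and dispose of it with two unproved assertions: that the trace theorem bounds $H$ restricted to the outgoing boundary, and that the weight $|\langle\eta,\nu(y)\rangle|$ in $d\mu$ "tames" the blow-up of $X\tau$ at glancing, "as is standard". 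As written this is not an estimate. The quantity you must control is $\int_{\partial_+S}|H(\gamma_{y,\eta}(\tau),\dot\gamma_{y,\eta}(\tau))|^2\,|X\tau|^2\,|\langle\eta,\nu(y)\rangle|\,dy\,d\eta$, in which the trace of $H$ is evaluated at the exit point (so one needs the scattering relation and its effect on the measure, e.g.\ that it preserves $|\langle\eta,\nu\rangle|\,dy\,d\eta$), while the weight you invoke sits at the entry point; no computation matching the claimed blow-up rate of $X\tau$ against that weight is given, and a crude count ($|X\tau|^2\sim a^{-2}$ against a weight $\sim a$) does not obviously converge.

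In fact, for this particular geometry your description of the obstacle is also off: for the cap $S^{n-1}_{>\beta'}$ one computes $\tau(y,\eta)=2\arctan(\langle\eta,e_n\rangle/\beta')$, which is smooth with bounded derivatives up to the glancing set, so $X\tau$ does not blow up at all; the real work in your route would be the change of variables through the scattering relation together with a trace bound, neither of which is carried out. The cleanest repair is the paper's: prove the estimate for $H$ compactly supported in the interior (which is all that is needed later, since $T_\lambda$ is only applied to functions supported in $S^{n-1}_{>\beta}\subset\subset S^{n-1}_{>\beta'}$), so the boundary term disappears, and then conclude by density/interpolation.
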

\begin{proof}
We only prove the result for $\sigma=0$ and $\sigma=1$. The result for intermediate values will follow by interpolation. By a density argument, it is enough to prove the result for every $H\in C^{\infty}_c(S(S^{n-1}_{>\beta^\prime}))$, and indeed we shall assume this regularity through the computations in this proof. \\

\noindent {\it Case $\sigma=0$}. Combining Cauchy-Schwarz's inequality with a direct application of Santal\'{o}'s formula \eqref{Santalo_formula} to $|H|^2$, we obtain 
\begin{align*}
\left\| T_\lambda H \right\|_{L^2(\partial_+ S(S^{n-1}_{>\beta^\prime}))}^2&=\int_{\partial_+ S(S^{n-1}_{>\beta^\prime})}| (T_\lambda H)(y, \eta)|^2 d\mu(y, \eta) \\
& \leq \pi \, e^{2|\lambda|\pi} \int_{\partial_+ S(S^{n-1}_{>\beta^\prime})}\int_0^{\tau(y, \eta)} |H (\gamma_{y, \eta}(\theta),  \dot{\gamma}_{y, \eta}(\theta) )|^2 d\theta\, d\mu(y, \eta)\\
& = \pi \, e^{2|\lambda|\pi}\int_{S(S^{n-1}_{>\beta^\prime})} |H(x, \xi)|^2 dx\, d\xi \\
& = \pi \, e^{2|\lambda|\pi} \left\| H\right\|_{L^2(S(S^{n-1}_{>\beta^\prime}))}^2,
\end{align*}
which immediately implies the $L^2$-continuity of $T_\lambda$. \\

\noindent {\it Case $\sigma=1$}. In this case, it is enough to relate the differential maps of $T_\lambda H$ and $H$ in their corresponding domains, and then use again Santal\'{o}'s formula \eqref{Santalo_formula} as in case $\sigma=0$. Consider $(y, \eta)\in \partial_+S(S^{n-1}_{>\beta^\prime})$. Recall that $dT_\lambda H(y, \eta)$ is a linear map from $T_{(y, \eta)} \,\partial_+S(S^{n-1}_{>\beta^\prime})$ to $\mathbb{C}$. To obtain a suitable bound for $d\,T_\lambda H (y, \eta)$, we consider $(y^\prime, \eta^\prime)\in T_{(y, \eta)} \,\partial_+S(S^{n-1}_{>\beta^\prime})$ and for $\epsilon_0>0$ small enough, a smooth curve $\Gamma: (-\epsilon_0, \epsilon_0)\to \partial_+ S(S^{n-1}_{>\beta^\prime})$ so that $\Gamma(0)=(y, \eta)$ and $(\frac{d}{ds} \Gamma)_{|_{s=0}}=(y^\prime, \eta^\prime)$. Writing $\Gamma(s)=(\Gamma_1(s), \Gamma_2(s))$, these conditions read as
\[
\Gamma_1(0)=y, \quad   \Gamma_2(0)=\eta, \quad  \left(\frac{d}{ds} \Gamma_1\right)_{|_{s=0}}=  y^\prime, \quad \left(\frac{d}{ds} \Gamma_2\right)_{|_{s=0}}=  \eta^\prime. 
\]
By definition, we have in $\mathbb{C}$: 
\[
\left( d T_\lambda H (y, \eta)\right) (y^\prime, \eta^\prime)= \left(\dfrac{d}{ds} (T_\lambda\, H) (\Gamma(s))\right)_{|_{s=0}}.
\]
A direct application of the chain rule yields
\begin{align*}
& \dfrac{d}{ds} (T_\lambda\, H) (\Gamma(s))\\
 & = \int_{0}^{\tau(\Gamma_1(s), \Gamma_2(s))} e^{-\lambda \theta} \dfrac{d}{ds}  \left(H \left(\gamma_{\Gamma_1(s), \Gamma_2(s)}(\theta), \dot{\gamma}_{\Gamma_1(s), \Gamma_2(s)} (\theta)  \right)\right) d\theta \\
&\qquad  + e^{-\lambda \,  \tau (\Gamma_1(s), \Gamma_2(s))}   \frac{d}{ds} \left( \tau (\Gamma_1(s), \Gamma_2(s)) \right)\\
&\qquad \qquad  \times H\left( \gamma_{  \Gamma_1(s), \Gamma_2(s)}(  \tau (\Gamma_1(s), \Gamma_2(s))),  \dot{\gamma}_{ \Gamma_1(s), \Gamma_2(s)}(  \tau (\Gamma_1(s), \Gamma_2(s))) \right).
\end{align*}
Since $H$ is compactly supported on $S(S^{n-1}_{>\beta^\prime})$, it follows that the second term on the right vanishes when evaluating at $s=0$. Hence 
\[
\left( d T_\lambda H (y, \eta)\right) (y^\prime, \eta^\prime)= \int_{0}^{\tau(y, \eta)} e^{-\lambda \theta} \left(  \dfrac{d}{ds} H \left(\gamma_{\Gamma_1(s), \Gamma_2(s)}(\theta), \dot{\gamma}_{\Gamma_1(s), \Gamma_2(s)} (\theta) \right) \right)_{|_{s=0}}d\theta. 
\]
To compute the derivative at $s=0$ in the right-hand side, we introduce the geodesic flow. For each $\theta\in (-\epsilon_0, \epsilon_0)$, consider the map $\Theta_\theta: S(S^{n-1}_{>\beta^\prime})\to S(S^{n-1}_{>\beta^\prime})$ defined by
\[
\Theta_\theta (x, \xi)= (\gamma_{x, \xi}(\theta), \dot{\gamma}_{x, \xi}(\theta) ).
\]
Note that $\Theta_0$ is the identity operator in $S(S^{n-1}_{>\beta^\prime})$, $\Theta_{\theta_1+ \theta_2}=\Theta_{\theta_1}\circ \Theta_{\theta_2}$ for every $\theta_1, \theta_2$ small enough. Moreover, the map $\Theta_{\theta}$ is a diffeomorphism for each $\theta\in (-\epsilon_0, \epsilon_0)$. It allows us to define the smooth map $\Theta:  (-\epsilon_0, \epsilon_0)\times S(S^{n-1}_{>\beta^\prime})\to S(S^{n-1}_{>\beta^\prime})$ by 
\[
\Theta (\theta; (x, \xi))= \Theta_\theta(x, \xi). 
\]
In geometric language, this map is usually called the geodesic flow associated to $S^{n-1}_{>\beta^\prime}$.  We refer the reader to \cite[Chapter 1]{GPa} and \cite[Notes 8]{IPRGJYU} for a detailed treatment of the geodesic flow's regularity properties in Riemannian manifolds. Fixing $\theta \in (0, \tau(y, \eta))$, $s\mapsto \Theta_\theta(\Gamma_1(s), \Gamma_2(s))$ is a curve in $S(S^{n-1}_{>\beta^\prime})$ with initial condition $\Theta_\theta(y, \eta)$ and initial velocity $\left( \frac{d}{ds}\Theta_\theta(\Gamma_1(s), \Gamma_2(s))\right)_{|_{s=0}}$. By definition we have
\[
\left( \frac{d}{ds}\Theta_\theta(\Gamma_1(s), \Gamma_2(s))\right)_{|_{s=0}}= (d\Theta_\theta (y, \eta))(y^\prime, \eta^\prime).
\]
Adding up all previous facts, we get
\begin{align*}
&\left(  \dfrac{d}{ds} H \left(\gamma_{\Gamma_1(s), \Gamma_2(s)}(\theta), \dot{\gamma}_{\Gamma_1(s), \Gamma_2(s)} (\theta) \right) \right)_{|_{s=0}}\\
&=\left(  \dfrac{d}{ds} H \left(\Theta_\theta (\Gamma_1(s), \Gamma_2(s)) \right)\right)_{|_{s=0}} \\
&=\left( dH(\gamma_{y, \eta}(\theta), \dot{\gamma}_{y, \eta}(\theta) ) \right) \left( (d\Theta_\theta (y, \eta))(y^\prime, \eta^\prime) \right).
\end{align*}
For our purpose, it is enough to know that $d \Theta_\theta (y, \eta)$ remains uniformly bounded with universal constant independent of $\theta$ and $(y, \eta)$. Indeed, by \cite[Lemma 1.40]{GPa}, we have
\[
d \Theta_\theta (y, \eta)(y^\prime, \eta^\prime)= J_{(y^\prime, \eta^\prime)}(\theta)+ D_\theta J_{(y^\prime, \eta^\prime)}(\theta),
\]
where $J_{(y^\prime, \eta^\prime)}$ is the Jacobi field along the geodesic $\theta \mapsto \gamma_{y, \eta}(\theta)$ with initial conditions $J_{(y^\prime, \eta^\prime)}(0)= (y^\prime, \eta^\prime)_h$ and $D_\theta J_{(y^\prime, \eta^\prime)}(0)= (y^\prime, \eta^\prime)_v$. Here $D_{\theta}$, $(y^\prime, \eta^\prime)_h$ and $(y^\prime, \eta^\prime)_v$ stand for the covariant derivative with respect to $\theta$, and the horizontal and vertical decomposition of $(y^\prime, \eta^\prime)$, respectively. Combining the above computations, we get the estimate
\[
|(dT_\lambda H)(y, \eta)|^2 \leq C \pi \, e^{2|\lambda|\pi} \int_{0}^{\tau(y, \eta)} |dH (\gamma_{y, \eta}(\theta), \dot{\gamma}_{y, \eta}(\theta))|^2 d\theta.
\]
Finally, integrating both sides in $\partial_+ S(S^{n-1}_{>\beta^\prime})$ and using Santal\'{o}'s formula \eqref{Santalo_formula}, the case $\sigma=1$ follows immediately. The proof is completed. 
\end{proof}





From now on, we restrict our study of $T_\lambda$ to a particular family of smooth functions in $S(S^{n-1}_{>\beta^\prime})$. Denote by $p=(x, \xi)$ the elements of $S(S^{n-1}_{>\beta^\prime})$, where $x=\pi(p)$ and $\xi$  its corresponding unit tangent vector. Given a function $f\in C^\infty(S^{n-1}_{>\beta^\prime})$ and  a  1-form field $ \alpha\in C^{\infty}(S^{n-1}_{>\beta^\prime}; \Lambda^1(S^{n-1}_{>\beta^\prime}))$, we can define the affine function (affine in $\xi$ for each fixed $x$) on $S(S^{n-1}_{>\beta^\prime})$ by 
\begin{equation}\label{affine_function}
H(p)= f(x)+ \alpha(x)(\xi).
\end{equation}
The operator $T_\lambda$ has a non-empty kernel in this subset of functions. Indeed,  $T_\lambda(-\lambda \ss + d\ss)=0$ for all smooth function $\ss\in C^\infty(S^{n-1}_{>\beta^\prime})$ with $\ss|_{\partial S^{n-1}_{>\beta^\prime}}=0$. In particular, Lemma \ref{continuity:attenuated_ray_transform_1} gives us bounds from above for $T_\lambda$. However, it is not obvious at all how to get similar bounds from below. It can be addressed by using the normal operator $T_\lambda^*\,T_\lambda$, which is an elliptic pseudodifferential operator of order $-1$ acting on solenoidal pairs, see for instance \cite[Proposition 1]{HS}, \cite[Proposition 4.1]{FSU} and \cite[Proposition 3.1]{YMA}. Here $T_\lambda^*$ stands for the adjoint of $T_\lambda$.

\begin{deft}\label{deft_solenoidal_partz}
We say that a pair $(f, \upalpha)\in L^2(S^{n-1}_{>\beta^\prime})\times L^2( \Lambda^1S^{n-1}_{>\beta^\prime})$ ($f$ is a function and $\upalpha$ an $1$-form on $S^{n-1}_{>\beta^\prime}$) is a {\bf solenoidal pair} if $\delta_{S^{n-1}}\upalpha=0$. Here $S^{n-1}$ is equipped with the canonical metric of $\mathbb{R}^n$ and $\delta_{S^{n-1}}$ denotes the divergence operator on $S^{n-1}$.

\end{deft}

\begin{lem}
\label{abstract estimate}
Let $X$, $Y$, and $Z$ be Banach spaces and let $A_\lambda : X\to Y$ be a family of injective linear operators depending continuously on $\lambda\in \R$ with respect to the norm topology on bounded linear operators. Suppose there is a family of compact operators $K_\lambda : X\to Z$, again depending continuously on $\lambda\in \R$ with respect to the norm topology on bounded linear operators. Suppose that there exists $\lambda_0>0$ such that for all $|\lambda|\leq \lambda_0$ we have the uniform estimate
$$\|u\|_X\leq C(\|A_\lambda u\|_{Y} + \|K_\lambda u\|_Z)$$
then there exists $C'\in\R$ such that
$$\|u\|_X \leq C' \, \|A_\lambda u\|_{Y}$$
for all $|\lambda|\leq \lambda_0$.
\end{lem}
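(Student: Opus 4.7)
The plan is a standard functional-analytic contradiction argument, exploiting compactness simultaneously in the parameter $\lambda$ and through $K_{\lambda^\star}$. Suppose the desired estimate fails; then for every $n\in\mathbb{N}$ one can find $\lambda_n\in[-\lambda_0,\lambda_0]$ and $u_n\in X$ with $\|u_n\|_X=1$ and $\|A_{\lambda_n}u_n\|_Y<1/n$. The interval $[-\lambda_0,\lambda_0]$ being compact, I pass to a subsequence (not relabeled) with $\lambda_n\to\lambda^\star\in[-\lambda_0,\lambda_0]$. Combining the norm-continuity of $\lambda\mapsto A_\lambda$ with $\|u_n\|_X=1$ yields
$$\|A_{\lambda^\star}u_n\|_Y\;\leq\;\|A_{\lambda_n}u_n\|_Y+\|A_{\lambda^\star}-A_{\lambda_n}\|\;\longrightarrow\;0.$$

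Next, since $\{u_n\}$ is bounded in $X$ and $K_{\lambda^\star}$ is compact, a further subsequence can be chosen so that $K_{\lambda^\star}u_n$ converges in $Z$, and is in particular Cauchy. Applying the hypothesis at the fixed parameter $\lambda^\star\in[-\lambda_0,\lambda_0]$ to the differences,
$$\|u_n-u_m\|_X\;\leq\; C\bigl(\|A_{\lambda^\star}(u_n-u_m)\|_Y+\|K_{\lambda^\star}(u_n-u_m)\|_Z\bigr),$$
both terms on the right tend to zero as $n,m\to\infty$. Hence $\{u_n\}$ is Cauchy in the Banach space $X$ and converges to some $u^\star\in X$ with $\|u^\star\|_X=1$. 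Continuity of $A_{\lambda^\star}$ then forces $A_{\lambda^\star}u^\star=\lim_n A_{\lambda^\star}u_n=0$, and the injectivity of $A_{\lambda^\star}$ gives $u^\star=0$, contradicting $\|u^\star\|_X=1$.

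No serious obstacle is anticipated: the argument is of Peetre/Fredholm-alternative flavour. The only technical point worth flagging is that it is convenient to \emph{freeze} the parameter at $\lambda^\star$ before invoking compactness of $K$; one can instead work with the moving operators $K_{\lambda_n}$, combining $\|K_{\lambda_n}-K_{\lambda^\star}\|\to 0$ with compactness of $K_{\lambda^\star}$ to extract a convergent subsequence of $K_{\lambda_n}u_n$, but this is a strictly longer route to the same contradiction. The norm-continuity hypotheses on $A_\lambda$ and $K_\lambda$ (rather than mere strong continuity) and the injectivity of $A_\lambda$ on the whole of $X$ are each used in an essential way, and the closedness of $[-\lambda_0,\lambda_0]$ ensures $\lambda^\star$ lies in the range where the a priori bound is available.
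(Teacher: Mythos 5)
Your proposal is correct and follows essentially the same contradiction argument as the paper: extract a convergent subsequence $\lambda_n\to\lambda^\star$, use norm-continuity of $A_\lambda$ and compactness of $K_{\lambda^\star}$ to show the normalized sequence is Cauchy via the a priori estimate at the frozen parameter, and contradict injectivity of $A_{\lambda^\star}$. The only cosmetic difference is that the paper phrases the estimate at the limit parameter as obtained "by a limiting argument," whereas you simply note $\lambda^\star\in[-\lambda_0,\lambda_0]$ so the hypothesis applies directly, which is equally valid.
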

\begin{proof}
Suppose by contradiction that there is a sequence $\lambda_j$ and $u_j$ with $\|u_j\|_{X} = 1$ such that $A_{\lambda_j} u_j \to 0$. Without loss of generality we may assume that $\lambda_j \to \hat\lambda$ and $\{K_{\hat\lambda}u_j\}$ is a Cauchy sequence. Taking a limiting argument we have that
\begin{eqnarray}
\label{hat estimate}
\|u\|_X \leq C (\|A_{\hat\lambda}u\|_Y + \|K_{\hat\lambda} u \|_Z).
\end{eqnarray}
Using the triangle inequality, we can deduce that $\|A_{\hat\lambda} u _j \|_Y \to 0$.
Insert $u_j - u_k$ into estimate \eqref{hat estimate} we get
$$ \|u_j - u_k\|_X \leq C(\|A_{\hat\lambda}(u_j - u_k)\|_Y + \|K_{\hat\lambda} (u_j - u_k)\|_{Z}).$$
This means that $\{u_j\}$ is a Cauchy sequence converging to some $\hat u$. We then have that $A_{\hat \lambda} \hat u = 0$ contradicting injectivity. 
\end{proof}
\begin{thm}\label{stability_estimates_st_ra_tr}
Let $0<\beta'<\beta<1$ be given by Lemma \ref{g_e_c_o_s}. Then there exist $\lambda_0>0$ and $C>0$ such that for all $0\leq \lambda \leq \lambda_0$ one has the following estimate
\begin{equation}\label{shifted_version}
\left\| f \right\|_{H^{-1}(S^{n-1}_{>\beta})} + \left\| \upalpha \right\|_{H^{-1}(\Lambda^1S^{n-1}_{>\beta})}  \leq C \left\| T_\lambda^*\,T_\lambda [f, \upalpha] \right\|_{{L^{2}( S(S^{n-1}_{>\beta'}))}}
\end{equation}
for all solenoidal pair $(f, \upalpha)$ which are compactly supported on $S^{n-1}_{>\beta}$. 
\end{thm}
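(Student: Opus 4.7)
My approach is to apply the abstract perturbation result Lemma~\ref{abstract estimate}. Let $X$ be the closed subspace of $H^{-1}(S^{n-1}_{>\beta})\times H^{-1}(\Lambda^1 S^{n-1}_{>\beta})$ consisting of solenoidal pairs compactly supported in $\overline{S^{n-1}_{>\beta}}$, let $Y := L^2(S(S^{n-1}_{>\beta'}))$, and let $Z$ be the analogue of $X$ with $H^{-1}$ replaced by $H^{-2}$. Under the affine-in-$\xi$ identification \eqref{affine_function}, I regard $A_\lambda := T_\lambda^*T_\lambda$ as a bounded map $X\to Y$, and take $K_\lambda : X\hookrightarrow Z$ to be the ($\lambda$-independent, compact) inclusion supplied by Rellich.

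The first step is to prove the Fredholm-type estimate
$$\|(f,\upalpha)\|_X \;\le\; C\bigl(\|T_\lambda^*T_\lambda [f,\upalpha]\|_Y + \|(f,\upalpha)\|_Z\bigr)$$
uniformly for $|\lambda|\le\lambda_0$. As recalled after Definition~\ref{deft_solenoidal_partz}, $T_\lambda^*T_\lambda$ is an elliptic pseudodifferential operator of order $-1$ acting on solenoidal pairs. The support condition $\overline{S^{n-1}_{>\beta}}\Subset S^{n-1}_{>\beta'}$ ensures that the pseudodifferential calculus on the open manifold $S^{n-1}_{>\beta'}$ applies without any boundary complications. The principal symbol of $T_\lambda^*T_\lambda$ depends smoothly on $\lambda$ (the attenuation $e^{-\lambda\theta}$ enters smoothly and reduces to $1$ at $\lambda=0$), so ellipticity is preserved uniformly on a neighbourhood of $0$ and the G\aa{}rding-type estimate above holds with a constant independent of $\lambda$ in that range.

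The second step is to verify the remaining hypotheses of Lemma~\ref{abstract estimate}. Continuity of $\lambda\mapsto A_\lambda$ in operator norm follows by expanding $e^{-\lambda\theta}$ as a power series in $\lambda$ inside \eqref{defi_atte_geod_ray_trans} and combining with the uniform bound of Lemma~\ref{continuity:attenuated_ray_transform_1}; $K_\lambda$ is $\lambda$-independent and compact. Injectivity of $A_\lambda$ on $X$ reduces to injectivity of $T_\lambda$ on the solenoidal, compactly supported class: at $\lambda=0$ this is the classical solenoidal injectivity of the geodesic ray transform on the simple Riemannian cap (simplicity being guaranteed by the choice of $\beta,\beta'$ in Lemma~\ref{g_e_c_o_s}), and it persists for small real $\lambda$ by the standard perturbation theory of attenuated ray transforms on simple manifolds. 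Lemma~\ref{abstract estimate} then removes the $Z$-term and delivers \eqref{shifted_version}.

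\textbf{Expected obstacle.} The delicate point is not the ellipticity/compactness machinery, which is standard once one has identified the right function spaces, but rather extending kernel-triviality from $\lambda=0$ to a genuine open interval while keeping the solenoidal constraint and the restricted support intact; this is what really sets the size of $\lambda_0$. A secondary technical nuisance is justifying the pseudodifferential-calculus step for the normal operator on the spherical cap when the inputs are affine-in-$\xi$, so that the ellipticity estimate in the first step truly lives in $X$ with a uniform constant.
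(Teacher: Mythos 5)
Your skeleton matches the paper's strategy in outline (normal operator, ellipticity, the abstract Lemma \ref{abstract estimate}, kernel-triviality for small $\lambda$), but there is a genuine gap at the exact point you dismiss as a ``secondary technical nuisance.'' The operator $T_\lambda^* T_\lambda$ is \emph{not} elliptic as a system on pairs $[f,\upalpha]$: its principal symbol (computed in \eqref{integrals}) only involves averages over $\omega\perp\xi$, so it annihilates the directions with $f=0$ and $\upalpha$ parallel to $\xi$. Ellipticity only holds ``transversally to the potential directions,'' i.e.\ on solenoidal pairs, and the standard parametrix construction you invoke does not take place on your subspace $X$: a parametrix for a non-elliptic system does not exist, and inserting the solenoidal projection to restore ellipticity is problematic on a cap with boundary (the projection is nonlocal, does not preserve compact support in $S^{n-1}_{>\beta}$, and is not a clean $\Psi$DO up to $\partial S^{n-1}_{>\beta'}$). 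Consequently your Step 1 estimate $\|(f,\upalpha)\|_X \le C(\|T_\lambda^*T_\lambda[f,\upalpha]\|_Y + \|(f,\upalpha)\|_Z)$, uniform in $\lambda$, is asserted rather than proved, and it is precisely the nontrivial content of the theorem. The paper resolves this by augmenting the normal operator: it works with $E_\lambda = T_\lambda^*T_\lambda + \bigl[\begin{smallmatrix}0&0\\ 0& d\langle\epsilon D\rangle^{-3/2}\langle\epsilon D\rangle^{-3/2}\delta\end{smallmatrix}\bigr]$, whose added block restores ellipticity in the missing $\xi$-parallel direction (this is where the $|{\rm proj}_\xi(\upalpha)|^2$ term in the symbol bound comes from), builds the cutoff parametrix \eqref{parametrix eq} for $E_\lambda$ on the full pair space, applies Lemma \ref{abstract estimate} to $E_\lambda$, and only at the very end restricts to solenoidal pairs, where the added block vanishes so that $E_\lambda[f,\upalpha]=T_\lambda^*T_\lambda[f,\upalpha]$ and \eqref{shifted_version} follows.

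A second, smaller issue is your injectivity step. Since elements of $X$ are only $H^{-1}$, you cannot directly pair $T_\lambda^*T_\lambda u$ with $u$ to conclude $T_\lambda u = 0$; the paper first uses the parametrix identity to show that any kernel element of $E_\lambda$ is smooth, and only then unpacks the definition to get $T_\lambda[f,\upalpha]=0$ and $\delta\upalpha=0$ (using Lemma \ref{bijection of jap bracket}). Moreover, ``injectivity persists for small $\lambda$ by standard perturbation theory'' is not a safe appeal: injectivity is not in general stable under operator-norm perturbation, and what is actually used is the precise kernel characterization of the attenuated transform for small attenuation ([DSFKSU, Theorem 7.1]: kernel elements are of the form $f=\lambda\varphi$, $\upalpha=d\varphi$ with Dirichlet $\varphi$), which combined with solenoidality gives $\Delta_g\varphi=0$, hence $\varphi=0$. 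You would need to cite and use such a result explicitly; without it, and without the ellipticity fix above, the proposal does not close.
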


\begin{rem} 
We mention the recent work \cite{YMA}, where the author proved a similar version of \eqref{shifted_version} when $T_\lambda^*\,T_\lambda$ is acting on roughly $1$-forms plus symmetric $2$-tensors in a simple manifold, instead of functions plus $1$-forms as our case, see \eqref{affine_function}. Our case is a particular case of \cite[Proposition 3.1]{YMA} because any function $f\in C^\infty(S^{n-1}_{>\beta^\prime})$ can be identified with a symmetric $2$-tensor by $f(x)\widetilde{g}_x(\xi, \xi)$, where $\widetilde{g}$ denotes the metric on $S^{n-1}_{>\beta^\prime}$. Under this identification, we could use \cite[Proposition 3.1]{YMA} to deduce Theorem \ref{stability_estimates_st_ra_tr}. However, for the convenience of the reader and for the sake of completeness, we give a proof of Theorem \ref{stability_estimates_st_ra_tr}. 
\end{rem}

\begin{proof}
Denote by $T_\lambda^*$ the adjoint operator to $T_\lambda$. The normal operator  $T_\lambda^* T_\lambda$ is then given by the following matrix of operators
\begin{eqnarray}
\label{normal op}
T_\lambda^*T_\lambda[f,\upalpha] =  {\begin{bmatrix}
 (T^0_\lambda)^* T_\lambda^0 & (T^0_\lambda)^* T^1_\lambda \\
   (T^1_\lambda)^* T_\lambda^0&  (T_\lambda^1)^* T_\lambda^1  \\
     \end{bmatrix}} \begin{bmatrix} f\\ \upalpha\end{bmatrix}
\end{eqnarray}
where $T_\lambda^j$ denotes the attenuated geodesic transforms with attenuation $e^{-\lambda t}$ on $S_{>\beta'}^{n-1}$ acting on symmetric $j$-tensors. We would like to compute the principal symbol of the operator 
\begin{eqnarray}
\label{Elambda}
 E_\lambda:=  T_\lambda^* T_\lambda  +  \begin{bmatrix} 0&0\\0& d \langle \epsilon D\rangle^{-3/2} \langle\epsilon D\rangle^{-3/2}\delta  \\ \end{bmatrix}
\end{eqnarray}
where $\langle \epsilon D\rangle^{r}$ a self-adjoint $\Psi$DO on $S^{n-1}$ with principal symbol $(1 + \epsilon^2|\xi|^2)^{r/2}$. Observe that by treating $\epsilon>0$ as a semiclassical parameter, we have the following result.
\begin{lem}
\label{bijection of jap bracket}
There exists a fixed $\epsilon_0>0$ such that for all $\epsilon >\epsilon_0$ small enough, the operator $\langle \epsilon D\rangle^r$ is a bijection from $H^{s}(S^{n-1}) \to H^{s-r}(S^{n-1})$ (though the norm of the bijection will not be uniform in $\epsilon$.)
\end{lem}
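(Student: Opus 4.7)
The plan is to define $\langle \epsilon D\rangle^r$ concretely through the functional calculus of the non-negative self-adjoint Laplace-Beltrami operator $-\Delta_{S^{n-1}}$ on the round sphere, and then read off the bijection from the spectral decomposition. Write the spectrum of $-\Delta_{S^{n-1}}$ as $\{\mu_k\}_{k\geq 0}\subset [0,\infty)$ with associated $L^2$-orthonormal eigenbasis $\{\phi_k\}$, and set
$$\langle \epsilon D\rangle^r u \;:=\; \sum_{k} (1+\epsilon^2 \mu_k)^{r/2}\, \widehat u(k)\, \phi_k, \qquad \widehat u(k) = \langle u,\phi_k\rangle_{L^2}.$$
By the Weyl law for $\Delta_{S^{n-1}}$ this is a classical $\Psi$DO whose principal symbol is $(1+\epsilon^2|\xi|^2)^{r/2}$, hence an admissible realization of the operator in the statement, and it is manifestly self-adjoint on $L^2(S^{n-1})$.

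Next I would reduce bijectivity to an elementary weight comparison by invoking the standard spectral characterization of Sobolev norms,
$$\|u\|_{H^s(S^{n-1})}^2 \;\simeq\; \sum_k (1+\mu_k)^s\, |\widehat u(k)|^2.$$
The key observation is that for every $r\in\mathbb{R}$ there exist positive constants $c(\epsilon,r), C(\epsilon,r)$ with
$$c(\epsilon,r)(1+\mu_k)^{r}\;\leq\;(1+\epsilon^2\mu_k)^{r}\;\leq\;C(\epsilon,r)(1+\mu_k)^{r}, \qquad k\geq 0.$$
This is immediate for $r\geq 0$ via the split $\mu_k\leq 1$ versus $\mu_k\geq 1$, giving $c=\epsilon^{2r}$ and $C=1$ when $0<\epsilon\leq 1$; for $r<0$ one inverts. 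From these two-sided bounds, $\langle \epsilon D\rangle^r$ and the Fourier multiplier with weights $(1+\epsilon^2\mu_k)^{-r/2}$ are simultaneously bounded between $H^s(S^{n-1})$ and $H^{s-r}(S^{n-1})$ in the two directions, and they invert each other on the dense span of the $\phi_k$'s. By continuity they extend to a mutually inverse pair on the full Sobolev scale, yielding the desired bijection.

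I foresee no real obstacle here; the only point requiring care is not to claim uniformity in $\epsilon$. Indeed, tracking the constants one sees $c(\epsilon,r)$ and $C(\epsilon,r)^{-1}$ scale like $\epsilon^{2|r|}$, so the operator norms of both the bijection and its inverse degenerate polynomially as $\epsilon\to 0^+$. This is precisely the non-uniformity flagged in the parenthetical remark of the lemma, and it is harmless for the intended use of $\langle \epsilon D\rangle^r$ in the construction of $E_\lambda$ in \eqref{Elambda}, where $\epsilon$ will eventually be fixed at a suitably small value $\epsilon_0$ chosen so that the lower-order correction $d\langle \epsilon D\rangle^{-3/2}\langle \epsilon D\rangle^{-3/2}\delta$ makes the full operator $E_\lambda$ elliptic on the solenoidal pairs treated in Theorem \ref{stability_estimates_st_ra_tr}.
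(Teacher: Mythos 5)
Your argument is correct, but it proves the lemma by a genuinely different route than the paper. The paper leaves $\langle \epsilon D\rangle^{r}$ as an unspecified self-adjoint $\Psi$DO with principal symbol $(1+\epsilon^2|\xi|^2)^{r/2}$ and argues via the semiclassical composition calculus: $\langle \epsilon D\rangle^{-r}\langle \epsilon D\rangle^{r}= I+\epsilon\,{\rm Op}_\epsilon(a)$ with $a$ of order $-1$, so for $\epsilon$ small the error is invertible by Neumann series on the $\epsilon$-dependent Sobolev scale; this works for \emph{any} quantization of the symbol but genuinely needs $\epsilon$ small. You instead fix a particular admissible realization, the spectral one $u\mapsto\sum_k(1+\epsilon^2\mu_k)^{r/2}\widehat u(k)\phi_k$, for which the inverse is the explicit multiplier with reciprocal weights; the two-sided comparison $c(\epsilon,r)(1+\mu_k)^{r}\leq(1+\epsilon^2\mu_k)^{r}\leq C(\epsilon,r)(1+\mu_k)^{r}$ then gives bijectivity $H^{s}\to H^{s-r}$ for \emph{every} $\epsilon>0$, with the non-uniformity $\sim\epsilon^{2|r|}$ made explicit, and no $\Psi$DO calculus is needed for the bijection itself. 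This is more elementary and even slightly stronger; the only point to repair is your justification that this spectral operator is an admissible realization: the relevant fact is that $(1-\epsilon^2\Delta_{S^{n-1}})^{r/2}$ is a (semiclassical) $\Psi$DO with symbol $(1+\epsilon^2|\xi|^2)^{r/2}$, which follows from the functional calculus for elliptic self-adjoint operators (Seeley's complex powers, or the Helffer--Sj\"ostrand semiclassical functional calculus), not from the Weyl law, which only counts eigenvalues. That identification is also what you need later so that the symbol computations defining $E_\lambda$ and the parametrix in \eqref{parametrix eq} apply to your realization; with that citation corrected, your proof serves the paper's purposes just as well as the original.
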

\begin{proof}
Standard semiclassical $\Psi$DO calculus states $\langle \epsilon D\rangle^{-r} \langle \epsilon D\rangle^{r} = I + \epsilon{\rm Op}_\epsilon (a)$ for some semiclassical symbol $a$ of order $-1$. By taking $\epsilon$ small enough, we get an inverse by Neumann series on semiclassical Sobolev spaces parametrized by $\epsilon$. Therefore, for each fixed $\epsilon$ sufficiently small we have that $\langle \epsilon D\rangle^{r}$ is a bijection between $H^{s}(S^{n-1}) \to H^{s-r}(S^{n-1})$.
\end{proof}

 Let $g$ be the metric on $S^{n-1}$ and denote by $m_g$ the operator $m_g : f\mapsto f g$.  The principal symbol of $E_\lambda$ can then be computed by observing that $T_\lambda^0 = T_\lambda^2 m_g$ and use the formula derived in proof of  \cite[Proposition 3.1]{YMA}. In normal coordinates centered at $x$, they are given by
\begin{equation}\label{integrals}
\begin{aligned}
& \sigma ((T_\lambda^0)^* T_{\lambda}^0)(x,\xi) f\\
&\quad  = 2\pi\varphi|\xi|^{-1}\int_{\omega \in S_x S^{n-1}_{>\beta'}} \delta(|\xi|^{-1} \xi(\omega))e^{-2\lambda \tau(x,-\omega)} d\omega\\
&\sigma ((T_\lambda^1)^* T_{\lambda}^0)(x,\xi) f \\
&\quad = 2\pi\varphi|\xi|^{-1}\left( \int_{\omega \in S_x S^{n-1}_{>\beta'}}  \omega^k \delta(|\xi|^{-1} \xi(\omega)) e^{-2\lambda \tau(x,-\omega)}d\omega \right) \delta_{j,k} dx^j\\
&\sigma ((T_\lambda^0)^* T_{\lambda}^1)(x,\xi) \upalpha  \\
&\quad = 2\pi|\xi|^{-1} \int_{\omega \in S_x S^{n-1}_{>\beta'}} \alpha_k \omega^k \delta(|\xi|^{-1} \xi(\omega)) e^{-2\lambda \tau(x,-\omega)}d\omega\\
&\sigma ((T_\lambda^1)^* T_{\lambda}^1)(x,\xi) \upalpha \\
&\quad  =  2\pi|\xi|^{-1}\left( \int_{\omega \in S_x S^{n-1}_{>\beta'}}  \alpha_k \omega^k \delta(|\xi|^{-1} \xi(\omega))e^{-2\lambda \tau(x,-\omega)} d\omega \right)\delta_{j,k}dx^j.
\end{aligned}
\end{equation}

In the above equations $\tau(x,\omega)$ denotes the time it takes for a geodesic starting at $x$ with initial velocity $\omega$ to reach the boundary $\partial S_{>\beta'}^{n-1}$. Note that we can conclude from the above that $\{E_\lambda\}_{\lambda\in \R}$ is a family of operators depending continuously on the parameter $\lambda\in \R$ with respect to the topology given by the operator norm from $H^{-1}(S_{>\beta'}^{n-1}) \to L^2(S_{>\beta'}^{n-1})$.

Let $\chi\in C^\infty_0(S^{n-1}_{>0})$ be a cutoff function which is identically 1 on $S_{>(\beta + \beta')/2}^{n-1}$ but with support in $S_{>\beta'}^{n-1}$. Use the identities of \eqref{normal op}, \eqref{Elambda}, and \eqref{integrals} we have that for all $[f,\upalpha]\in T_xS^{n-1}_{>(\beta'+\beta)/2} \oplus \C$:
\begin{align*}
&\langle [f,\upalpha], \sigma (\chi E_\lambda \chi)(x,\xi) [f,\upalpha]\rangle  \\
&=2\pi|\xi|^{-1}\chi^2(x)\left(|{\rm proj}_{\xi}(\upalpha)|^2 + \int_{\omega \in S_x S^{n-1}_{>\beta'}} |f+\alpha_k \omega^k|^2 \delta( \xi(\frac{\omega}{|\xi|})) e^{-2\lambda \tau(x,-\omega)}d\omega \right)\\
& \geq 2\pi |\xi|^{-1} \chi^2(x) \left(|{\rm proj}_{\xi}(\upalpha)|^2  +e^{c\lambda} \int_{S_xS^{n-1}_{>\beta'}} |f+ \alpha_k\omega^k|^2 \delta( \xi(\frac{\omega}{|\xi|}))d\omega\right).
\end{align*}
By using a change of variable $\omega \mapsto -\omega$, we can write
\begin{align*}
& 2\int_{S_xS^{n-1}_{>\beta'}} |f+ \alpha_k\omega^k|^2 \delta( \xi(\frac{\omega}{|\xi|}))d\omega \\
&=  \int_{S_xS^{n-1}_{>\beta'}} |f+ \alpha_k\omega^k|^2 \delta( \xi(\frac{\omega}{|\xi|}))d\omega + \int_{S_xS^{n-1}_{>\beta'}} |f- \alpha_k\omega^k|^2 \delta( \xi(\frac{\omega}{|\xi|}))d\omega  \\
&  = 2  \int_{S_xS^{n-1}_{>\beta'}}f^2+ (\alpha_k\omega^k)^2 \delta( \xi(\frac{\omega}{|\xi|}))d\omega.
\end{align*}
This means that
\begin{align*}
& \langle [f,\upalpha], \sigma (\chi E_\lambda \chi)(x,\xi) [f,\upalpha]\rangle \\
&  \geq C_\lambda|\xi|^{-1} \chi^2(x) \left( {\rm proj}_{\xi}(\upalpha)|^2 + f^2+ \int_{S_xS^{n-1}_{>\beta'}} (\alpha_k\omega^k)^2 \delta( \xi(\frac{\omega}{|\xi|}))d\omega  \right)
\end{align*}
for some $C_\lambda$ which is uniformly bounded away from zero on a bounded sets of $\lambda\in \R$. Let us choose coordinates so that $\xi = |\xi| (1,0,\dots)$ and $\upalpha = (\alpha_1 ,\alpha')$ with $\alpha' = |\alpha'|(0,1,\dots)$. We then have that 
\begin{align*}
& \langle [f,\upalpha], \sigma (\chi E_\lambda \chi)(x,\xi) [f,\upalpha]\rangle \\
&  \geq C_\lambda |\xi|^{-1} \chi^2(x)\left( (\alpha_1)^2  +f^2 + |\alpha'|^2  \right) = C_\lambda |\xi|^{-1}\chi^2(x) (|\upalpha|_g^2 + f^2)
\end{align*}
with $C_\lambda$ uniformly bounded away from zero on bounded sets of $\lambda \in \R$. Therefore as long as $\lambda$ is restricted to a bounded set, $\chi E_\lambda \chi$ is a family of order $-1$ pseudodifferential operators which are uniformly elliptic on $S^{n-1}_{>(\beta'+\beta)/2}$.

Let $\tilde\chi\in C^\infty_0(S^{n-1}_{>0})$ such that $\tilde\chi =1$ on $S^{n-1}_{>\beta}$ with support contained in $S^{n-1}_{>(\beta'+\beta)/2}$. By ellipticity, there exists a family of order $1$ pseudodifferential operators $\tilde\chi P_\lambda \tilde\chi$ with principal symbol $\tilde\chi^2(x) \sigma(\chi E_\lambda\chi)^{-1}$ such that 
\begin{eqnarray}
\label{parametrix eq}
\tilde\chi P_\lambda \tilde \chi \chi E_\lambda \chi = \tilde\chi^2 + \tilde\chi K_\lambda\chi
\end{eqnarray}
with $K_\lambda : H^{l}(S^{n-1}_{>(\beta'+\beta)/2}) \to H^k(S^{n-1}_{>(\beta'+\beta)/2})$ for any $l,k\in \R$. This family of operators depend continuously on the parameter $\lambda \in \R$ in the operator norm topology. Therefore, for any $[f,\upalpha] \in H^{-1}(S^{n-1}_{>\beta})$, we have
$$\| [f,\upalpha] \|_{H^{-1}(S^{n-1}_{>\beta})} \leq C( \|E_\lambda [f,\upalpha]\|_{L^2(S^{n-1}_{>\beta'})} + \| \tilde \chi K_{\lambda} [f,\upalpha]\|_{H^{-1}(S^{n-1}_+}).$$
To apply Lemma \ref{abstract estimate}, we need to check that $E_\lambda : H^{-1}(S^{n-1}_{>\beta}) \to L^2(S^{n-1}_{>\beta'})$ is injective where $E_\lambda$ acts on elements of $H^{-1}(S^{n-1}_{>\beta})$ by first trivially extending them to become elements of $H^{-1}(S^{n-1}_{>\beta'})$. 

Suppose $[f,\upalpha] \in H^{-1}(S^{n-1}_{> \beta})$ is annihilated by $E_\lambda$. By \eqref{parametrix eq} we have that, when extended trivially outside of $S^{n-1}_{>\beta}$,
\[
\label{compact supp}
[f,\upalpha]\in C^{\infty}(S^{n-1}).
\]
Unpacking the definition of $E_\lambda$ we have that
$$ \| T_\lambda [f,\upalpha]\|_{L^2} + \int_{S^{n-1}} |\langle \epsilon D\rangle ^{-3/2} (\delta \upalpha)|^2= 0.$$
By Lemma \ref{bijection of jap bracket}, if $\epsilon >0$ is a fixed to be sufficiently small, 
\begin{eqnarray}
\label{solenoidal integral vanishes}
T_\lambda [f,\upalpha] = 0,\ \  \delta \upalpha = 0.
\end{eqnarray}
By \cite[Theorem 7.1]{DSFKSU},  we have that for $\lambda$ sufficiently small, there exists $\varphi\in C^\infty(S^{n-1}_{>\beta'})$ with Dirichlet boundary condition such that $f = \lambda \varphi$ and $\upalpha = d \varphi$. Combine this observation with \eqref{solenoidal integral vanishes} we have that $\Delta_g\varphi = 0$. Dirichlet boundary condition then forces $\varphi = 0$ and we have injectivity of $E_\lambda$. 

We now use Lemma \ref{abstract estimate} to deduce
$$\| [f,\upalpha] \|_{H^{-1}(S^{n-1}_{>\beta})} \leq C \|E_\lambda [f,\upalpha]\|_{L^2(S^{n-1}_{>\beta'})} $$
In the case when $\delta \upalpha  = 0$ we have estimate \eqref{shifted_version}.
\end{proof}

The normal operator argument is not needed in the absence of a magnetic potential as it was proved in \cite[Theorem 2.3]{CDSFR1}.
\begin{thm}\label{stability_estimates_dossantoscaroruiz129}
Let $0<\beta^\prime <\beta<1$ be as given in Lemma \ref{g_e_c_o_s}. Then there exist $\lambda_0>0$ and $C>0$ such that for all $0\leq \lambda \leq \lambda_0$ one has the following estimate
\[
\left\| f \right\|_{H^{-1/2}(S^{n-1}_{>\beta})} \leq C \left\| T_\lambda (f)\right\|_{L^2(\partial_+S(S^{n-1}_{>\beta^\prime}))}
\]
for all $f\in L^2(S^{n-1}_{>\beta^\prime})$ which are compactly supported on $S^{n-1}_{>\beta}$. 
\end{thm}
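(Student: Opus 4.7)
The plan is to run the same normal-operator / Lemma \ref{abstract estimate} strategy used for Theorem \ref{stability_estimates_st_ra_tr}, specialized to the scalar setting where no $1$-form component appears and no solenoidal modification is required. Concretely I would work directly with $E_\lambda := T_\lambda^* T_\lambda$ acting on scalar functions supported in $S^{n-1}_{>\beta}$, and then convert the resulting estimate on $E_\lambda f$ to one on $T_\lambda f$ by duality.

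The starting point is that the first line of \eqref{integrals} gives
$$\sigma(E_\lambda)(x,\xi) = 2\pi|\xi|^{-1}\int_{\omega \in S_x S^{n-1}_{>\beta'}} \delta(|\xi|^{-1}\xi(\omega))\, e^{-2\lambda \tau(x,-\omega)}\, d\omega,$$
which is strictly positive with a lower bound uniform on bounded sets of $\lambda$. Hence $E_\lambda$ is a uniformly elliptic $\Psi$DO of order $-1$ on $S^{n-1}_{>(\beta+\beta')/2}$. With cutoffs $\chi, \tilde\chi$ chosen exactly as in the previous proof, the elliptic parametrix construction furnishes a family $P_\lambda$ of order-$1$ $\Psi$DOs, continuous in $\lambda$ in the operator norm topology, such that $\tilde\chi P_\lambda \tilde\chi\, \chi E_\lambda \chi = \tilde\chi^2 + \tilde\chi K_\lambda \chi$ with $K_\lambda$ smoothing. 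Taking $s=-1/2$ in the resulting Sobolev bound then yields, for any $f \in H^{-1/2}$ supported in $S^{n-1}_{>\beta}$,
$$\|f\|_{H^{-1/2}(S^{n-1}_{>\beta})} \leq C\bigl(\|E_\lambda f\|_{H^{1/2}(S^{n-1}_{>\beta'})} + \|K_\lambda f\|_{L^2}\bigr).$$

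Next I would verify injectivity of $T_\lambda$ on compactly supported scalar functions for $|\lambda| \leq \lambda_0$ small. Applying \cite[Theorem 7.1]{DSFKSU} with vanishing $1$-form component forces any $f$ with $T_\lambda f = 0$ to take the form $f = \lambda \varphi$ for some $\varphi \in C^\infty$ satisfying both Dirichlet boundary data and $d\varphi = 0$; the latter makes $\varphi$ locally constant, and combined with the Dirichlet condition gives $\varphi \equiv 0$, hence $f \equiv 0$. Feeding this injectivity together with the preceding inequality into Lemma \ref{abstract estimate} (with $X$ the compactly supported $H^{-1/2}$ scalars, $A_\lambda = E_\lambda$, $K_\lambda$ the compact remainder) eliminates the $\|K_\lambda f\|_{L^2}$ term and produces
$$\|f\|_{H^{-1/2}(S^{n-1}_{>\beta})} \leq C\|T_\lambda^* T_\lambda f\|_{H^{1/2}(S^{n-1}_{>\beta'})}.$$

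The final step is the duality conversion $\|T_\lambda^* T_\lambda f\|_{H^{1/2}} \leq \|T_\lambda^*\|_{L^2 \to H^{1/2}} \|T_\lambda f\|_{L^2}$, where the boundedness $T_\lambda^*:L^2(\partial_+S(S^{n-1}_{>\beta'})) \to H^{1/2}(S^{n-1}_{>\beta'})$ is dual to the standard $H^{-1/2} \to L^2$ smoothing property of geodesic X-ray transforms on simple domains. The main obstacle is the injectivity step at $\lambda > 0$: although scalar injectivity at $\lambda = 0$ is classical, the attenuated version requires the kernel characterization of \cite[Theorem 7.1]{DSFKSU}, which however collapses to a trivial Dirichlet problem in the absence of a $1$-form component and so causes no real difficulty here.
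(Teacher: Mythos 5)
Your route is genuinely different from the paper's: the paper does not prove Theorem \ref{stability_estimates_dossantoscaroruiz129} at all, but quotes it from \cite[Theorem 2.3]{CDSFR1}, remarking explicitly that ``the normal operator argument is not needed in the absence of a magnetic potential.'' Re-deriving it by specializing the machinery of Theorem \ref{stability_estimates_st_ra_tr} to scalars is legitimate in principle, and your ellipticity/parametrix step, the injectivity argument via \cite[Theorem 7.1]{DSFKSU} with vanishing one-form part (together with the smoothness bootstrap needed to pass from $E_\lambda f=0$ to $T_\lambda f=0$, which you should state but which parallels the paper), and the shift of Sobolev indices to $-1/2$ are all plausible, granted continuity in $\lambda$ of $E_\lambda$ and $K_\lambda$ at these indices.

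The genuine gap is the final conversion. The inequality $\|T_\lambda^*T_\lambda f\|_{H^{1/2}}\leq\|T_\lambda^*\|_{L^2\to H^{1/2}}\|T_\lambda f\|_{L^2}$ presupposes that $T_\lambda^*$ is bounded from $L^2(\partial_+S(S^{n-1}_{>\beta'}))$ to $H^{1/2}(S^{n-1}_{>\beta'})$, equivalently that $T_\lambda$ gains half a derivative on distributions supported in $S^{n-1}_{>\beta}$. This is not ``standard'' and is not a duality consequence of anything available here: the only mapping property in the paper is Lemma \ref{continuity:attenuated_ray_transform_1}, which gives $H^\sigma\to H^\sigma$ with no gain, and the half-derivative character of $T_\lambda$ is of the same depth as the estimate at stake, so asserting it by fiat leaves the crucial step unjustified. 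It is in fact true for sources compactly supported in $S^{n-1}_{>\beta}\subset\subset S^{n-1}_{>\beta'}$, but its proof again requires the order $-1$ $\Psi$DO property of the normal operator, e.g. via $\|T_\lambda f\|_{L^2}^2=\langle E_\lambda f,f\rangle\leq\|E_\lambda f\|_{H^{1/2}}\|f\|_{H^{-1/2}}\lesssim\|f\|_{H^{-1/2}}^2$, using ellipticity/calculus near the support and smoothness of the Schwartz kernel off the diagonal near $\partial S^{n-1}_{>\beta'}$; you would have to supply this. A cleaner repair avoids the detour through $\|E_\lambda f\|_{H^{1/2}}$ altogether: pair against $f$, write $\|T_\lambda f\|_{L^2}^2=\langle\chi E_\lambda\chi f,f\rangle$, and use an approximate square root (or sharp G{\aa}rding) for the nonnegative order $-1$ symbol in the first line of \eqref{integrals} to obtain $\langle\chi E_\lambda\chi f,f\rangle\geq c\|f\|_{H^{-1/2}}^2-C\|f\|_{H^{-1}}^2$, then remove the compact error by the same injectivity-plus-compactness reasoning as in Lemma \ref{abstract estimate}, run for $f\in L^2$ with fixed support. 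As written, however, the last step of your proposal rests on an unproven smoothing property, so the argument is incomplete.
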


\section{Complex geometric optic solutions}\label{c_g_o_sulzlo}



In this section, we construct CGO solutions to $\mathcal{L}_{W, V}\,u=0$  in an open and bounded set with smooth boundary $B\subset \mathbb{R}^n$ so that $\overline{\Omega}\subset \subset B$, see Proposition \ref{CGO solutions_p}. The potentials $W$ and $V$ will play the role of $\chi_\Omega A$ and $\chi_\Omega q$, the extensions by zero out $\Omega$ of our original potentials. After some rotations and translations, without loss of generality, we can assume that 
\begin{equation}\label{geodesic_coordinates}
x_0=0, \quad 0\notin  ch(\overline{B}),\quad \overline{B}\subset \mathbb{R}^n_+:=\left\{ x\in \mathbb{R}\, :\, x_n>0\right\}.
\end{equation}
Note that these conditions remain true when replacing $B$ by $\Omega$. To construct CGO solutions and to be in line with the coordinates of the attenuated geodesic ray transform defined in Section \ref{preli}, we first introduce appropriate coordinates in $B$.

\subsection{Geodesic coordinates}
\label{g_ode_ic_ctes} For more details on this topic, see \cite[Section 2.1 and 3.1]{CDSFR1}. Recall that for $\hbar \in [0,1)$, we have defined
\[
S^{n-1}_{>\hbar}= \left\{ x\in S^{n-1}\, : \, x_n>\hbar\right\}\subset \mathbb{R}^n. 
\]
The manifold $S^{n-1}_{>\hbar}$ will always be considered with the canonical metric of $\mathbb{R}^n$. 
Thanks to \eqref{geodesic_coordinates}, we deduce that there exist $T>0$ and $\beta\in (0,1)$ (depending on $B$ and the distance from zero to $B$) such that every $x\in B$ can be written as 
\[
x= e^t w, \qquad \;  t=\log \left| x\right| \in (-T, T), \quad w= \dfrac{x}{\left| x\right|}\in S^{n-1}_{>\beta}.
\]
Let $0<\beta^\prime<\beta$. We claim that in turn $S^{n-1}_{>\beta}$ can be parametrized by geodesics on the unit sphere starting on $\partial S^{n-1}_{>\beta^\prime}$. Indeed, fixing an arbitrary $y\in \partial S^{n-1}_{>\beta^\prime}$, every $w\in S^{n-1}_{>\beta}$ may be written as
\[
w = (\cos \theta)y + (\sin \theta) \eta, \quad \theta \in  (\epsilon, \pi-\epsilon),\;  \eta \in y^{\perp}\cap S^{n-1}_{>0},
\]
for some $\epsilon\in (0, \pi)$ depending on $\beta$ and $\beta^\prime$ but independent of $y$. Here $\theta = d_{S^{n-1}}(y,w)=\arccos\langle y, w\rangle$ and $\eta= \theta^{-1}\exp_y^{-1}w$. Since $y\in \partial S^{n-1}_{>\beta^\prime}$, $d_{S^{n-1}} (y, \, \cdot \,)$ is smooth in $S^{n-1}_{>\beta}$. In those coordinates, the Lebesgue measure becomes
\[
dx = e^{nt} dt \, dw = e^{nt} (\sin\theta)^{n-2} \, dt  \,  d\theta \, d \eta,
\]
where $dw$ and $d\eta$ denote the Lebesgue measures on $S^{n-1}$ and $S^{n-2}$ provided with the canonical metrics of $\mathbb{R}^n$ and $\mathbb{R}^{n-1}$, respectively. Thus, $B$ can be imbedded into $\mathbb{R}\times S^{n-1}_{>\beta}$ or $\mathbb{R}\times \mathbb{R}\times S^{n-2}$. Furthermore, a straightforward computation shows that in the above coordinates, the Euclidean metric looks like
\[
|dx|^2 = e^{2nt}(dt^2 + g_{S^{n-1}})= e^{2nt}(dt^2 + d\theta^2 + (\sin\theta)^2\, g_{S^{n-2}}),
\]
where $g_{S^{n-j}}$ is the canonical metric on the hypersphere $S^{n-j}$, $j=1,2$.  
We summarize the above discussion.

\begin{lem}\label{g_e_c_o_s}
Let $B\subset \mathbb{R}^n$ be a bounded open set so that \eqref{geodesic_coordinates} is fulfilled. Let $0<\beta^\prime<\beta<1$ be as above. Fix $y\in \partial S^{n-1}_{>\beta^\prime}$. 
 Then there exist $T>0$ and $\epsilon\in (0, \pi)$ (both independent of $y$) such that 
\[
\begin{matrix}
\Psi_y:&B  &\rightarrow& (-T, T)\times (\epsilon, \pi-\epsilon)\times y^\perp \cap S^{n-1}_{>0} \\ 
 &x& \mapsto&(t, \theta, \eta)
\end{matrix}
\]
defines a change of variables, where
\[
\begin{aligned}
&x= e^t\, \gamma_{y, \eta}(\theta),\quad  \gamma_{y, \eta}(\theta) = (\cos \theta)y + (\sin \theta) \eta,\\
 &t=\log\left| x\right|, \quad \theta= d_{S^{n-1}}(y, x/\left| x\right|), \\
& \eta= (d_{S^{n-1}}(y, x/\left| x\right|))^{-1}\exp^{-1}_y(x/|x|).
\end{aligned}
\]
The function $d_{S^{n-1}} (y, \, \cdot \,)$ is smooth on $S^{n-1}_{>\beta}$, and the Euclidean metric in $\Psi_y(B)$ looks like 
\[
g= e^{2t}   \begin{pmatrix}
    \begin{array}{c|c}
 \begin{pmatrix}
1 &0 \\ 
 0& 1 \\
\end{pmatrix}_{2\times 2} & 0\, I_{(n-2)\times (n-2)}  \\
  \hline
  0\, I_{(n-2)\times (n-2)} & (\sin\theta)^2 \ g_{S^{n-2}}
    \end{array}
  \end{pmatrix}_{n\times n},
\]
where $I_{m\times m}$ denotes the $m\times m$ identity matrix. In particular, one has
\[
\left| dx\right|^2= e^{2t}(dt^2+  d\theta^2 + (\sin\theta)^2\, g_{S^{n-2}})
\]
and hence
\begin{equation}\label{det}
|g|:=\det g = e^{2 n t} (\sin \theta)^{2(n-2)} \det g_{S^{n-2}}.
\end{equation}
\end{lem}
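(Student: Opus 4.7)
The plan is to verify the lemma by composing two standard changes of coordinates and carefully tracking regularity and the pullback of the Euclidean metric. The map $\Psi_y$ naturally factors as first going from $x\in B$ to polar-like coordinates $(t,w)\in \mathbb{R}\times S^{n-1}$, and then parametrizing a neighborhood of $S^{n-1}_{>\beta}$ inside $S^{n-1}$ by geodesics emanating from the fixed point $y$.

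First I would unpack assumption \eqref{geodesic_coordinates}. Since $\overline{B}$ is compact with $0\notin \mathrm{ch}(\overline{B})$ and $\overline{B}\subset\mathbb{R}^n_+$, the set is bounded away from the origin and its radial projection $x\mapsto x/|x|$ lands in a compact subset of $S^{n-1}\cap \mathbb{R}^n_+$. By compactness there exist $T>0$ and $\beta\in(\beta',1)$ such that $\log|x|\in(-T,T)$ and $x/|x|\in S^{n-1}_{>\beta}$ for every $x\in B$. The map $x\mapsto (t,w)=(\log|x|, x/|x|)$ is a diffeomorphism onto its image in $(-T,T)\times S^{n-1}_{>\beta}$, and the computation $\partial_t(e^tw)=e^tw$ (radial, of Euclidean length $e^t$), together with the fact that tangential directions along $S^{n-1}$ are dilated by $e^t$, gives the pullback $|dx|^2=e^{2t}(dt^2+g_{S^{n-1}})$.

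Next I would parametrize the spherical factor $S^{n-1}_{>\beta}$ by geodesics from $y\in\partial S^{n-1}_{>\beta'}$. Every $w\in S^{n-1}_{>\beta}$ is of the form $w=(\cos\theta)y+(\sin\theta)\eta$ with $\theta=\arccos\langle y,w\rangle$ and $\eta=\theta^{-1}\exp_y^{-1}(w)\in y^\perp\cap S^{n-1}_{>0}$. The \emph{key regularity point} is the smoothness of $d_{S^{n-1}}(y,\cdot)=\arccos\langle y,\cdot\rangle$ on $S^{n-1}_{>\beta}$, which reduces to bounding $\langle y,w\rangle$ strictly inside $(-1,1)$. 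Since $y_n=\beta'<\beta<w_n\leq 1$, compactness yields $\langle y,w\rangle\in[-1+\delta,1-\delta]$ for some $\delta>0$, producing $\epsilon\in(0,\pi)$ with $\theta\in(\epsilon,\pi-\epsilon)$ that is uniform in $y\in \partial S^{n-1}_{>\beta'}$ by the rotational symmetry of the sphere. The condition $\eta\in S^{n-1}_{>0}$ follows because the component of $w$ along $y^\perp$ retains a strictly positive $n$-th coordinate on $\overline{B}$.

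Finally, the block form of $g$ and the determinant formula \eqref{det} come from the standard warped-product expression for geodesic polar coordinates on the round sphere: taking $\theta$ to be arc length along the geodesic $\gamma_{y,\eta}$ and $\eta$ to vary in the unit sphere of $y^\perp$, the Jacobi fields along $\gamma_{y,\eta}$ with zero initial value and unit initial velocity have norm $\sin\theta$, so $g_{S^{n-1}}=d\theta^2+(\sin\theta)^2g_{S^{n-2}}$. Combining this with the previous step gives the block form of $g$ in $\Psi_y(B)$, and $|g|=e^{2nt}(\sin\theta)^{2(n-2)}\det g_{S^{n-2}}$ is then immediate. I expect the only delicate step to be the uniform control on the geodesic distance from $S^{n-1}_{>\beta}$ to the antipode of $y$ that makes $\epsilon$ independent of $y$; everything else is a routine verification of the coordinate change.
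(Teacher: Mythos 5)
Your proposal is correct and takes essentially the same route as the paper: the paper's own justification is precisely the informal discussion of Section \ref{g_ode_ic_ctes} (the polar-type factorization $x=e^{t}w$ with $t=\log|x|\in(-T,T)$, $w\in S^{n-1}_{>\beta}$, followed by geodesic polar coordinates on $S^{n-1}$ centered at $y\in\partial S^{n-1}_{>\beta^\prime}$ with the warped-product metric $d\theta^{2}+(\sin\theta)^{2}g_{S^{n-2}}$), with details deferred to \cite{CDSFR1}. Your compactness argument bounding $\langle y,w\rangle$ away from $\pm 1$ (giving the uniform $\epsilon$ and the smoothness of $d_{S^{n-1}}(y,\cdot)$ on $S^{n-1}_{>\beta}$) simply makes explicit what the paper leaves implicit.
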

\begin{rem}\label{laplace_beltrami_psi}
Fix $y\in \partial S^{n-1}_{>\beta^\prime}$. Let $\rho(x):=\log |x|+ i  d_{S^{n-1}}(y, x/\left| x\right|)$. Set $z:=t+i\theta$ and $2\partial_{\overline{z}}:= \partial_t + i\partial_\theta$. Writing $\widetilde{f}=f\circ \Psi_y^{-1}$, we get 
\[
\begin{aligned}
&\widetilde{\rho}=z,\quad  (\nabla \rho)^{\widetilde{}}= e^{-2t}(\partial_t+i \partial_\theta), \quad (\nabla \rho \cdot \nabla)^{\widetilde{}}= 2e^{-2t}\partial_{\overline{z}}, \\
& \widetilde{\Delta \rho}= 2e^{-2t}\, \partial_{\overline{z}}(\log(|g|^{1/2}e^{-2t})).
\end{aligned}
\]
Note that $\widetilde{\rho}$ only depends on $t$ and $\theta$, and it is independent of $\eta\in y^\perp \cap S^{n-1}$. It significantly reduces the computations behind the above identities. On the other hand, any magnetic potential $W\in W^{1, \infty}_c(B; \mathbb{C}^n)$ can be identified with $\sum_{j=1}^n W_j dx_j$. Thus, $W$ may be written in $\Psi_y$-coordinate as
\[
W= W_t \,dt + W_{\theta} \,d\theta+ W_{\eta} d\eta,
\]
where
\begin{equation}\label{potential_coordinates_geodecis_1}
\begin{aligned}
W_t(t, \theta, \eta)&= e^t\, W(e^t \, \gamma_{y, \eta}(\theta))\cdot \gamma_{y, \eta}(\theta),\\
W_\theta(t, \theta, \eta)&= e^t\, W(e^t \, \gamma_{y, \eta}(\theta))\cdot \dot{\gamma}_{y, \eta}(\theta), \; \dot{\gamma}= d\gamma / d\theta.
\end{aligned}
\end{equation}
If we denote local coordinates of $y^\perp \cap S^{n-1}$ by
\begin{align*}
\eta&= \Sigma (\alpha_1, \alpha_2, \ldots, \alpha_{n-2})\\
&= \left( \Sigma_1 (\alpha_1, \alpha_2, \ldots, \alpha_{n-2}), \Sigma_2(\alpha_1, \alpha_2, \ldots, \alpha_{n-2}), \ldots, \Sigma_n (\alpha_1, \alpha_2, \ldots, \alpha_{n-2})  \right),
\end{align*}
where $\Sigma_j$ are smooth maps for $j=1,2, \ldots, n$, and $\alpha_k \in \mathbb{R}$ with $k=1,2, \ldots, n-2$; then
\[
W_\eta d\eta = e^t\sin \theta \sum_{k=1}^n \sum_{j=1}^{n-2} W_k (e^t\gamma_{y, \eta}(\theta)) \partial_{\alpha_j} \Sigma_k\, d \alpha_j.
\]
For our purpose, we will only need $W_t$ and $W_\theta$. So it is enough to know that $W_\eta\in W^{1, \infty}_c(\Psi_y(B), \mathbb{C}^n)$. Finally, $W(e^t \ \cdot\ )$ is compactly supported in $S^{n-1}_{>\beta}$ for any fixed $t\in (-T, T)$.
\end{rem}

\subsection{CGO solutions in geodesic coordinates}

The construction of CGO solutions is based on standard techniques by combining the Hahn-Banach theorem with a Carleman estimate for a conjugate version of $\mathcal{L}_{W, V}$. To be in line with the usual Sobolev spaces involved in estimates of Carleman type, we introduce the semiclassical notation. For $\tau>0$, we define
\[
 \mathcal{L}_{W, V, \varphi}:=\tau^{-2}e^{\tau\varphi}\mathcal{L}_{W,V}\, e^{-\tau \varphi}, \quad \varphi(x)=\log |x|.
\]
Denote by $H^{1}_{scl}(B)$ the $H^1$-Sobolev space with semiclassical parameter $\tau^{-1}$, and its dual space by $H^{-1}_{scl}(B)$. Their norms are respectively defined by
\[
\left \| U \right \|_{H^{1}_{scl}(B)}= \left \| U \right \|_{L^2(B)}+ \left \| \tau^{-1}\nabla U \right \|_{L^2(B)},
\]
\[
\left \| U \right \|_{H^{-1}_{scl}(B)}=\underset{ \varpi  \, \in\, C^{\infty}_0(B)\setminus \left\{ 0 \right\}}{\sup} \dfrac{\left | \left \langle u, \varpi  \right \rangle_{L^2(B)} \right |}{\left \| \varpi  \right \|_{H^{1}_{scl}(B)}}.
\]

The following result is an immediate consequence of the Carleman estimate for Laplacian systems obtained in \cite[Lemma 2.1]{SaTz}; see also \cite[Proposition 2.3]{KU}.

\begin{prop} \label{carl_estimate}
Let  $W\in L^{\infty}_c(B; \mathbb{C}^n)$ and $V\in  L^{\infty}_c(B; \mathbb{C})$. There exists $\tau_0>0$ and $C>0$ (both depending on $n$, $B$, $\left\|  W \right\|_{L^\infty}$ and $\left\| V \right\|_{L^\infty}$) such that for all $F \in H^{-1}(B)$ there exists $R\in H^1(B)$ satisfying
 \[
 \mathcal{L}_{W, V, - \varphi}\, R =F \; \; \mbox{in} \; \; B,
\]
\[
\tau^{-1} \left \| R \right \|_{H^{1}_{scl}(B)} \leq C\left \| F \right \|_{H^{-1}_{scl}(B)},\quad \text{for all}\; \tau\geq \tau_0.
\]
\end{prop}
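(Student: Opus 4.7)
The plan is to prove a Carleman estimate for the transpose of $\mathcal{L}_{W,V,-\varphi}$ on $C^\infty_c(B)$, and then obtain solvability by Hahn--Banach duality. The starting point is the Laplacian Carleman estimate from \cite[Lemma 2.1]{SaTz} (see also \cite[Proposition 2.3]{KU}): the condition $0 \notin ch(\overline{B})$ in \eqref{geodesic_coordinates} makes $\varphi(x) = \log|x|$ a limiting Carleman weight on a neighbourhood of $\overline{B}$, and so
$$\|v\|_{H^{1}_{scl}(B)} \leq C\tau \left\| \tau^{-2}e^{\tau\varphi}(-\Delta) e^{-\tau\varphi} v \right\|_{H^{-1}_{scl}(B)}, \qquad v \in C^\infty_c(B), \ \tau \geq \tau_0.$$

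The next step is to add the lower-order terms. The transpose $\mathcal{L}^t_{W,V}$ is a magnetic Schr\"odinger operator of the same form, with principal part $-\Delta$ plus a first-order perturbation $\mathcal{P}$ whose coefficients are controlled by $\|W\|_{L^\infty}$ and $\|V\|_{L^\infty}$ (with the divergence of $W$ interpreted distributionally). A direct computation shows that $\tau^{-2}e^{\tau\varphi}\mathcal{P} e^{-\tau\varphi}$ splits into a semiclassical first-order operator with $L^\infty$ coefficients carrying a prefactor $\tau^{-2}$, and a zeroth-order multiplication operator with prefactor $\tau^{-1}$. Using the continuous inclusions $L^2(B) \hookrightarrow H^{-1}_{scl}(B)$ and $H^{1}_{scl}(B) \hookrightarrow L^2(B)$ (with norms independent of $\tau$), both contributions map $H^{1}_{scl}(B) \to H^{-1}_{scl}(B)$ with operator norm $O(\tau^{-1})$. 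Choosing $\tau_0$ larger if needed, the perturbation is absorbed on the left and one arrives at
$$\|v\|_{H^{1}_{scl}(B)} \leq C\tau \left\| (\mathcal{L}_{W,V,-\varphi})^t v \right\|_{H^{-1}_{scl}(B)}, \qquad v \in C^\infty_c(B), \ \tau \geq \tau_0.$$

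The last step is the duality argument. On the subspace $(\mathcal{L}_{W,V,-\varphi})^t \bigl( C^\infty_c(B) \bigr) \subset H^{-1}_{scl}(B)$, define the linear functional $\ell\bigl( (\mathcal{L}_{W,V,-\varphi})^t v \bigr) := \langle F, v\rangle$, where $\langle \cdot,\cdot\rangle$ denotes the duality pairing between $H^{-1}_{scl}(B)$ and $H^{1}_{scl}(B)$. The Carleman estimate just proved shows that $\ell$ is well defined (by injectivity of $(\mathcal{L}_{W,V,-\varphi})^t$ on $C^\infty_c(B)$) and bounded by $C\tau \|F\|_{H^{-1}_{scl}(B)}$. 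Extending $\ell$ to all of $H^{-1}_{scl}(B)$ by Hahn--Banach and applying Riesz representation produces $R \in H^{1}_{scl}(B)$ satisfying $\mathcal{L}_{W,V,-\varphi} R = F$ in $\mathcal{D}'(B)$ together with the desired bound $\tau^{-1}\|R\|_{H^{1}_{scl}(B)} \leq C \|F\|_{H^{-1}_{scl}(B)}$.

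The main obstacle is the absorption step with only $L^\infty$ regularity on $W$: this forces the Carleman estimate to be formulated in the $H^{1}_{scl} \to H^{-1}_{scl}$ scale, since in the classical $L^2 \to L^2$ scale a first-order perturbation would not gain any power of $\tau^{-1}$. The extra $\tau^{-1}$ gain available in the semiclassical $H^{1}_{scl} \to H^{-1}_{scl}$ framework is precisely what allows $\mathcal{P}$ to be treated as a genuine lower-order perturbation, and this is the content of \cite[Lemma 2.1]{SaTz}.
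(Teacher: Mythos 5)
Your overall architecture (shifted $H^{1}_{scl}\to H^{-1}_{scl}$ Carleman estimate for the transposed conjugated operator, then Hahn--Banach plus Riesz representation) is exactly the route the paper has in mind when it calls the proposition an immediate consequence of \cite[Lemma 2.1]{SaTz} and \cite[Proposition 2.3]{KU}. However, your absorption step contains a genuine gap. You start from $\|v\|_{H^{1}_{scl}(B)} \leq C\tau \|\tau^{-2}e^{\tau\varphi}(-\Delta)e^{-\tau\varphi}v\|_{H^{-1}_{scl}(B)}$ and observe that the conjugated perturbation $\mathcal{P}_\tau$ (coming from $W\cdot D$, $D\cdot W$, $W^2$, $V$) has operator norm $O(\tau^{-1})$ from $H^{1}_{scl}$ to $H^{-1}_{scl}$; that is correct, but then $C\tau\cdot O(\tau^{-1})=O(1)$: after the triangle inequality you are left with $\|v\|_{H^{1}_{scl}} \leq C\tau\|(\mathcal{L}_{W,V,-\varphi})^t v\|_{H^{-1}_{scl}} + CC'\|v\|_{H^{1}_{scl}}$, where $C'$ is proportional to $\|W\|_{L^\infty}+\|V\|_{L^\infty}$ and is \emph{not} small. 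Taking $\tau_0$ larger does nothing to the constant $CC'$, so for potentials of arbitrary size the perturbation cannot be absorbed this way. Your closing remark that the $H^{-1}_{scl}$ scale itself supplies the needed gain is the source of the error: the shift to $H^{-1}_{scl}$ lets you make sense of $D\cdot W$ distributionally and accept $F\in H^{-1}$, but it does not produce an extra power of $\tau^{-1}$ for a multiplication operator.

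The missing ingredient is the convexification of the weight, which is precisely how the cited references handle bounded potentials: replace $\varphi$ by $\varphi_\varepsilon = \varphi + \frac{\varphi^2}{2\varepsilon\tau}$ (semiclassically $\varphi + h\varphi^2/(2\varepsilon)$). For the Laplacian this yields the improved estimate $\|v\|_{H^{1}_{scl}} \leq C\sqrt{\varepsilon}\,\tau\,\|\tau^{-2}e^{\tau\varphi_\varepsilon}(-\Delta)e^{-\tau\varphi_\varepsilon}v\|_{H^{-1}_{scl}}$ for $\tau$ large and $\varepsilon$ small, so the perturbation now contributes $C C'\sqrt{\varepsilon}\,\|v\|_{H^{1}_{scl}}$, which is absorbed by fixing $\varepsilon$ small in terms of the a priori bounds on $\|W\|_{L^\infty}$ and $\|V\|_{L^\infty}$; since $\varepsilon$ is then fixed, $e^{\varphi^2/(2\varepsilon)}$ is bounded above and below independently of $\tau$, and one returns to the linear weight $\varphi$ with the stated constant. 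Alternatively, you can simply invoke \cite[Proposition 2.3]{KU}, where the Carleman estimate is already proved for the full magnetic Schr\"odinger operator with $L^\infty$ potentials in the $H^{-1}_{scl}$ scale, and keep only your duality step, which is fine as written (and yields in fact $R\in H^1_0(B)$).
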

The following result was first proved in \cite{DSFKSjU} when $W\in C^2$ and $V\in L^\infty$. Later, it was improved in \cite{KnSa} by considering $W\in W^{1,n}\cap L^\infty$ and $V\in L^n$.
\begin{prop}\label{CGO solutions_p}
Let $W\in C^{1}_c(B; \mathbb{C}^n)$ and $V\in L^\infty_c(B; \mathbb{C})$. Assume $B\subset \mathbb{R}^n$ as in \eqref{geodesic_coordinates}. There exists $\tau_0>0$ and $C>0$ (both depending on $n$, $B$, $\left\|  W \right\|_{L^\infty}$ and $\left\| V \right\|_{L^\infty}$) such that the equation  $\mathcal{L}_{W , V}\, U=0$ in $B$ has a solution $U\in H^1(B)$ of the form
\[
U= e^{\tau(\varphi + i\psi)}\left( a + r  \right),\quad \tau \geq \tau_0,
\]
with the properties:
\begin{item}
\item[(i)] The functions $\varphi$ and $\psi$ are defined by 
\begin{equation}\label{eikonal_solution}
\varphi(x)=\log |x|, \qquad\quad \psi(x)= d_{S^{n-1}}(y, x/\left|x\right|),
\end{equation}
where $y\in S^{n-1}$ is chosen such that $\psi$ is smooth in $B$. 
\item[(ii)] The complex-valued function $a$ belongs to $W^{1,\infty}(B)$ and satisfies \footnote{ In equation below, the magnetic potential $W$ is view as a vectorial function so that $W\cdot D(\varphi + i\psi)$ stands for the usual inner product in $\mathbb{R}^n$.}
\[
\left[ 2D(\varphi + i\psi) \cdot D + 2W\cdot D(\varphi + i\psi) + D^2(\varphi + i\psi) \right]a=0,\; \mbox{in}\;\; B,
\]
\[
\left \| a  \right \|_{W^{\alpha, \infty}(B)} \lesssim \left \| W \right \|_{W^{\alpha,\infty}(B)}, \; \left | \alpha \right |\leq 1.
\]
\item[(iii)] The  function $r$ belongs to $H^1(B)$ and satisfies the estimate for all $\tau\geq \tau_0$
\[
 \left \|\partial^{\alpha} r  \right \|_{L^2(B)} \leq C \tau ^{ \left | \alpha \right |-1} \left \|  a\right \|_{H^1(B)},\; \left |\alpha  \right |	\leq 1.
\]
\end{item}
\end{prop}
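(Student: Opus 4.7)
I would seek $U$ in the standard CGO/WKB ansatz
\[
U = e^{\tau\rho}(a+r), \qquad \rho := \varphi + i\psi,
\]
and first reduce the equation $\mathcal{L}_{W,V}U = 0$ by conjugation. A direct computation gives that, after dividing by $\tau^2$, the equation is equivalent to
\[
\bigl[\tau^{-2}\mathcal{L}_{W,V} - (\nabla\rho)^2 - \tau^{-1}\bigl(\Delta\rho + 2i\nabla\rho\cdot D + 2iW\cdot\nabla\rho\bigr)\bigr](a+r) = 0.
\]
The first observation is the eikonal equation $(\nabla\rho)^2 = 0$: this follows from $|\nabla\varphi| = |\nabla\psi| = |x|^{-1}$ (the first is direct; the second because $\psi$ depends only on $x/|x|$ and inherits unit gradient from the intrinsic distance on $S^{n-1}$) together with $\nabla\varphi\perp\nabla\psi$ (radial versus spherical-tangent). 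The plan is then to choose $a$ to annihilate the $O(\tau^{-1})$ term, which forces the transport equation $(\Delta\rho + 2\nabla\rho\cdot\nabla + 2iW\cdot\nabla\rho)a = 0$; this is equivalent (up to a global sign) to the equation in (ii).

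To solve the transport equation I would pass to the geodesic coordinates of Lemma \ref{g_e_c_o_s}. By Remark \ref{laplace_beltrami_psi}, $\nabla\rho\cdot\nabla = 2e^{-2t}\partial_{\overline{z}}$ and $\Delta\rho = 2e^{-2t}\partial_{\overline{z}}\log(|g|^{1/2}e^{-2t})$, while a direct computation with $g^{ij}\partial_j\rho$ paired against the 1-form $W$ gives $W\cdot\nabla\rho = e^{-2t}(W_t + iW_\theta)$. The substitution $a = |g|^{-1/4}e^{t}b$ cancels the $\Delta\rho$ contribution and reduces the transport equation to the inhomogeneous $\overline{\partial}$-equation
\[
\partial_{\overline{z}}b = \tfrac{1}{2}(W_\theta - iW_t)\,b
\]
in the complex variable $z = t + i\theta$, with $\eta$ as a spectator parameter. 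Writing $b = e^{\Phi}$ and solving $\partial_{\overline{z}}\Phi = \tfrac{1}{2}(W_\theta - iW_t)$ via the Cauchy transform on a disc containing the $(t,\theta)$-support of $W$ yields $\Phi$, and hence $a$, in $W^{1,\infty}(B)$ with $\|a\|_{W^{\alpha,\infty}} \lesssim \|W\|_{W^{\alpha,\infty}}$ for $|\alpha|\leq 1$.

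With $a$ fixed, the remainder $r$ must solve $\mathcal{L}_{W,V,\rho}\, r = -\tau^{-2}\mathcal{L}_{W,V}\, a$. Since Proposition \ref{carl_estimate} is stated only for the real weight $-\varphi$, I would substitute $r = e^{-i\tau\psi}R$, which converts the equation to
\[
\mathcal{L}_{W,V,-\varphi}R = -\tau^{-2}e^{i\tau\psi}\mathcal{L}_{W,V}a =: F.
\]
Since $\psi$ is smooth with bounded derivatives on $B$, multiplication by $e^{\pm i\tau\psi}$ is a uniformly bounded operator on $H^{-1}_{scl}(B)$, so $\|F\|_{H^{-1}_{scl}} \lesssim \tau^{-1}\|a\|_{H^1}$ (using $a\in W^{1,\infty}\subset H^1$ and standard semiclassical bounds on each term of $\mathcal{L}_{W,V}a$). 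Proposition \ref{carl_estimate} then furnishes $R\in H^1(B)$ with $\tau^{-1}\|R\|_{H^1_{scl}} \lesssim \|F\|_{H^{-1}_{scl}}$; unwinding the semiclassical norms and setting $r = e^{-i\tau\psi}R$ yields estimate (iii).

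The principal technical hurdle is Step 2: the coordinate expression of $W\cdot\nabla\rho$ requires careful index gymnastics with the non-flat metric $g$, and the Cauchy-transform construction of $\Phi$ must be controlled uniformly in the spectator parameter $\eta\in y^\perp\cap S^{n-1}_{>0}$ and in the choice of base point $y$ defining $\psi$. The resulting exponential factor $a = e^\Phi|g|^{-1/4}e^{t}$ survives into every subsequent estimate and is precisely the source of the delicate analysis in the Appendix on quantitative holomorphic extensions.
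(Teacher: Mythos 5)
Your proposal is correct and follows essentially the same route as the paper's proof: the same ansatz $U=e^{\tau\rho}(a+r)$ with the eikonal pair \eqref{eikonal_solution}, the same reduction of the transport equation in the $\Psi_y$-coordinates of Lemma \ref{g_e_c_o_s} to the $\overline{\partial}$-equation $\partial_{\overline z}\Phi=\tfrac12(W_\theta-iW_t)$ solved by the Cauchy transform (Lemma \ref{delta_estimate}), and the same construction of the remainder via $r=e^{-i\tau\psi}R$ and the Carleman estimate of Proposition \ref{carl_estimate}, with the $H^{-1}_{scl}$ bound obtained by integration by parts exactly as in the paper. The only (harmless) difference is that you take $b=e^{\Phi}$ without the extra holomorphic factor $a_0$ which the paper retains for later use in Remark \ref{form_a_j}; this does not affect the validity of the proposition itself.
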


\begin{proof} We are looking for solutions of the form 
\[
U=e^{\tau\rho }(a+r),\quad \rho= \varphi + i\psi,
\]
where $\psi$ is a smooth real-valued phase, $a$ is an amplitude, and $r$ is a correction term. A straightforward computation shows
\begin{equation}
\begin{aligned}
&e^{-\tau\rho} \tau^{-2} \mathcal{L}_{W,V} \, e^{\tau\rho}\\
&= D\rho \cdot D\rho  + \tau^{-1}\left(  2D\rho \cdot D + 2W\cdot D\rho + D^2\rho \right)  + \tau^{-2} \mathcal{L}_{W,V}.
\end{aligned}
\end{equation}
Motivated by this identity, $U$ satisfies $\mathcal{L}_{W, V}\, U=0$ if we solve in $B$ once at a time the equations for $\rho$, $a$ and $r$ in the following order:   
\begin{equation}\label{ro1}
D\rho \cdot D \rho =0,
\end{equation}
\begin{equation}\label{ro2}
\left(  2D\rho \cdot D + 2W\cdot D\rho + D^2\rho \right) a=0,
\end{equation}
\begin{equation}\label{ro3}
e^{-\tau\rho} \tau^{-2} \mathcal{L}_{W,V} \, e^{\tau\rho}  r = - \tau^{-2} \mathcal{L}_{W,V}\, a.
\end{equation}

\noindent \textit{Eikonal equation}. Note that \eqref{ro1} reads
\[
\left| \nabla \varphi \right| = \left| \nabla \psi\right|,\quad \nabla \varphi \cdot \nabla\psi =0, \quad \mbox{in}\; B.
\]
As shown in  \cite{KSU}, a solution $(\varphi, \psi)$ is given by \eqref{eikonal_solution}, where $y \in \partial S^{n-1}_{>\beta^\prime}$ is chosen as in Lemma \ref{g_e_c_o_s}, so $\psi$ is smooth in $B$.\\

\noindent  \textit{Transport equation}. We will solve \eqref{ro2} with the help of the $\Psi_y$-coordinates introduced in Section \ref{geodesic_coordinates}. Fix $y\in \partial S^{n-1}_{>\beta^\prime}$ and consider the $\Psi_y$-coordinates described in Lemma \ref{geodesic_coordinates}. Taking into account Remark \ref{laplace_beltrami_psi}, \eqref{ro2} becomes a $\partial_{\overline{z}}$-equation for $\widetilde{a}:=a\circ\Psi^{-1}_y$ and reads in $\Psi_y(B)$:
\begin{equation}\label{cx_0q}
\left[\partial_{\overline{z}}+ \partial_{\overline{z}}\left(\log(e^{-t} |g|^{1/4}) \right)+\frac{i}{2}(W_t+iW_\theta)\right]\widetilde{a} =0.
\end{equation}
Multiplying both sides by $e^{-t}|g|^{1/4}$, we deduce that
\begin{equation}\label{sfsfsdxb_f}
\widetilde{a}= |g|^{-1/4} e^t e^{\Phi}a_0=  |g_{S^{n-2}}|^{-1/4}\, e^{-(n-2)t/2}(\sin\theta)^{-\frac{n-2}{2}}\e^{\Phi}a_0
\end{equation}
 is a solution to \eqref{cx_0q} whenever $\partial_{\overline{z}} \,a_0=0$ and for any fixed $\eta\in y^\perp\cap S^{n-1}$
\begin{equation}\label{eq_delta_ug}
(\partial_{\overline{z}}\, \Phi)(\cdot, \eta) +\frac{i}{2}(W_t + iW_\theta)(\cdot, \eta)=0, \quad \mbox{in}\; \mathbb{R}^2.
\end{equation}
Since $W(\cdot, \eta)\in C^{1}_c(\mathbb{R}^2)$, Lemma \ref{delta_estimate} ensures that
\[
\Phi(\cdot, \eta):= -\frac{i}{2}(\mathcal{C}(W_t+iW_\theta))(\cdot, \eta)
\]
is a solution to \eqref{eq_delta_ug} satisfying for every $|\alpha|\leq 1$:
\begin{equation}\label{sfsfsdxb_f1}
\left\| \partial^\alpha\Phi \right\|_{L^\infty(\Psi_y(B))} \lesssim \left\| \partial^\alpha( W\circ \Psi_y) \right\|_{L^\infty(\Psi_y(B))} \lesssim \left\| \partial^\alpha W \right\|_{L^\infty(\Psi_y(B))},
\end{equation}
Here $\mathcal{C}$ denotes the Cauchy transform operator defined by \eqref{caushy_end}, identifying $\mathbb{R}^2$ with the complex plane $\mathbb{C}$. Hence, $a=\widetilde{a}\circ \Psi_y\in W^{1, \infty}(B)$ solves \eqref{ro2}. Finally, combining \eqref{sfsfsdxb_f} and \eqref{sfsfsdxb_f1}, we get
\begin{equation}\label{remain_term_q}
\left\| a\right\|_{H^1(B)}\lesssim \left\| \widetilde{a}\right\|_{H^1(\Psi_y(B))}\lesssim \left\| a_0\right\|_{H^1(\Psi_y(B))},
\end{equation}
where the implicit constant depends on $\left\|W\right\|_{W^{1, \infty}}$. \\

\noindent \textit{Correction term}. By the previous step,  $a=\widetilde{a}\circ \Psi_y\in W^{1, \infty}(B)$ and therefore $\mathcal{L}_{W,V}a\in H^{-1}(B)$. Hence Proposition \ref{carl_estimate} ensures the existence of $R\in H^1(B)$ satisfying
\[
\mathcal{L}_{W, V, -\varphi}\, R= -\tau^{-2} e^{i\tau\psi} \mathcal{L}_{W,V}\, a,
\]
\[
\tau^{-1} \left \| R \right \|_{H^{1}_{scl}(B)} \lesssim \left \| \tau^{-2} e^{i\tau\psi} \mathcal{L}_{W,V}\, a \right \|_{H^{-1}_{scl}(B)}.
\]
We now compute the $H^{-1}_{scl}$-norm. Let $\varpi \in C^{\infty}_0(B)$ be a non-vanishing function. Integration by parts with Cauchy-Schwarz's inequality yield
\begin{align*}
\left| \left \langle\tau^{-2} e^{i\tau\psi} \Delta\, a, \varpi   \right \rangle \right|& = \tau^{-2}\left| \left \langle \nabla a,   \nabla (e^{-i\tau\psi} \varpi ) \right \rangle\right|\\
& \lesssim \tau^{-1}\left| \left \langle \nabla a, e^{-i\tau\psi}  \nabla\psi\,  \varpi      \right \rangle   \right| +\tau^{-1} \left|  \left \langle   \nabla a, e^{-i\tau\psi}\tau^{-1} \nabla \varpi    \right \rangle   \right| \\
& \lesssim \tau^{-1} \left\| a\right\|_{H^1(B)}\left\| \varpi \right\|_{H^1_{scl}(B)}.
\end{align*} 
In the same fashion, we get bounds for $\nabla\, a$ and $a$ with $\tau^{-2}$ instead of $\tau^{-1}$ in the above last line. Combining these estimates, one gets
\begin{equation}\label{remain_termzq}
 \left \| \tau^{-2} e^{i\tau\psi} \mathcal{L}_{W,V}\, a \right \|_{H^{-1}_{scl}(B)} \lesssim \tau^{-1} \left\| a\right\|_{H^1(B)}.
\end{equation}
Consequently, $r=e^{-i\tau\psi}R\in H^1(B)$ is a solution to \eqref{ro3} satisfying the desired norm bounds. The proof is completed.
\end{proof}

\begin{rem}\label{form_a_j}
It will be convenient to consider the bounds of $a$ and $r$ in terms of $\Psi_y$-coordinates. To be more precise, throughout the proof of Proposition \ref{CGO solutions_p}, we have considered a family of solutions to the transport equation  \eqref{ro2} of the form $a:=\widetilde{a}\circ \Psi_y\in H^1(B)$, where $\widetilde{a}$ is given by \eqref{sfsfsdxb_f} with $a_0$ being any holomorphic function in the complex variable $z:=t+i\theta$. This freedom is crucial in our approach and will be used in the Appendix to remove the exponential term from the intermediate estimates. By combining \eqref{remain_term_q} and \eqref{remain_termzq}, we deduce the remainder term satisfies for $\tau\geq \tau_0$:
\[
 \left \|\partial^{\alpha} r  \right \|_{L^2(B)} \lesssim \tau ^{ \left | \alpha \right |-1} \left \|  a\right \|_{H^1(B)}    \lesssim  \tau ^{ \left | \alpha \right |-1} \left\| a_0\right\|_{H^1(\Psi_y(B))}\; \left |\alpha  \right |	\leq 1,
\]
where the implicit constant depends on $\left\|W\right\|_{W^{1, \infty}}$. 
\end{rem}

\section{Stability estimate for the magnetic potential}\label{sectio_four_st}
In this section, we derive a useful integral inequality relating the partial DN maps with the magnetic potentials, see Proposition \ref{magnetic_estimate_final} and Corollary \ref{cor:Phi_new_coordinates}. Finally, we prove Theorem \ref{SMP} employing a suitable attenuated geodesic ray transform associated to $A_1$ and $A_2$. From now on, for $j=1,2$, we denote $\Lambda_{A_j, q_j}$ and $\Lambda^\sharp_{A_j, q_j}$ by $\Lambda_j$ and $\Lambda^\sharp_j$, respectively. 

\subsection{Relating the partial DN maps with the magnetic potentials} \label{prioirbounsd} We start by stating the integral identity proved in \cite[Corollary 3.2]{Sun}.
\begin{lem} Assume that all conditions from the statement of Theorem \ref{SMP} hold. Suppose $u_j\in H^1(\Omega)$ satisfy $\mathcal{L}_{A_j, q_j}u_j=0$ in $\Omega$ with $j=1,2$. Then
\begin{equation}\label{al}
\begin{aligned}
& \left \langle  (\Lambda_1 - \Lambda_2)u_1,u_2 \right \rangle_{L^2(\partial\Omega)}\\
=&\int_{\Omega} \left[ (A_1-A_2)\cdot(Du_1 \overline{u}_2 + u_1 \overline{Du}_2) + (A_1^2-A_2^2+q_1-q_2)u_1\overline{u}_2\right]dx.
 \end{aligned}
\end{equation}
\end{lem}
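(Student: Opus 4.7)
The identity \eqref{al} is the Alessandrini-type polarization formula for the magnetic Schr\"odinger operator, established as Corollary~3.2 of \cite{Sun}. My plan reproduces the derivation via two integrations by parts.

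First, I would pair $\mathcal{L}_{A_1,q_1}u_1 = 0$ with $\bar u_2$ and integrate by parts once on the divergence term $-\nabla \cdot [(\nabla + iA_1)u_1]$, using the definition $\Lambda_1 u_1 = \nu\cdot(\nabla+iA_1)u_1|_{\partial\Omega}$, to obtain
\[
\int_{\partial\Omega}(\Lambda_1 u_1)\bar u_2\, dS = \int_\Omega (\nabla+iA_1)u_1\cdot(\nabla-iA_1)\bar u_2\, dx + \int_\Omega q_1 u_1\bar u_2\, dx.
\]
Next, I would introduce the auxiliary function $v\in H^1(\Omega)$ solving $\mathcal{L}_{A_2,q_2}v = 0$ with $v = u_1$ on $\partial\Omega$, which exists uniquely since $0$ is not a Dirichlet eigenvalue; by definition $\Lambda_2 u_1 = \nu\cdot(\nabla+iA_2)v|_{\partial\Omega}$, and the same computation for $v$ gives
\[
\int_{\partial\Omega}(\Lambda_2 u_1)\bar u_2\, dS = \int_\Omega (\nabla+iA_2)v\cdot(\nabla-iA_2)\bar u_2\, dx + \int_\Omega q_2 v\bar u_2\, dx.
\]

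Second, I would subtract these two displays and add/subtract the pivot $\int_\Omega [(\nabla+iA_2)u_1\cdot(\nabla-iA_2)\bar u_2 + q_2 u_1 \bar u_2]\, dx$, splitting the right-hand side into a ``$u_1$-only'' piece and a ``$(u_1 - v)$''-remainder. The $u_1$-only piece expands algebraically: the difference of the two magnetic quadratic forms $(\nabla+iA_j)u_1\cdot(\nabla-iA_j)\bar u_2$ for $j=1,2$ yields exactly
\[
-i(A_1-A_2)\cdot[\nabla u_1\,\bar u_2 - u_1\,\nabla\bar u_2] + (A_1^2 - A_2^2)u_1\bar u_2,
\]
which, combined with the $(q_1-q_2)u_1\bar u_2$ contribution and the identities $Du = -i\nabla u$ and $\overline{Du} = i\nabla \bar u$, reproduces the right-hand side of \eqref{al}.

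Third, I would show the $(u_1-v)$-remainder
\[
\int_\Omega (\nabla+iA_2)(u_1-v)\cdot(\nabla-iA_2)\bar u_2\, dx + \int_\Omega q_2(u_1-v)\bar u_2\, dx
\]
vanishes by a second integration by parts, shifting $\nabla$ back off $u_1-v$. This produces no boundary term because $(u_1-v)|_{\partial\Omega}=0$, and the resulting volume integral becomes $\int_\Omega (u_1-v)\,\overline{\mathcal{L}_{A_2,q_2} u_2}\, dx$, which vanishes by the hypothesis $\mathcal{L}_{A_2,q_2} u_2 = 0$ (tracking the sign and conjugation conventions carefully so that the adjoint version of Green's formula for $\mathcal{L}_{A_2,q_2}$ closes up correctly on complex-valued potentials). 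The only bookkeeping challenge is the algebraic expansion in the second step, which presents no analytic obstacle under the $C^1 \times L^\infty$ hypothesis on the potentials and the $H^1$ regularity of the solutions.
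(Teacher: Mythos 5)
The paper gives no proof of this lemma at all --- it simply cites \cite[Corollary 3.2]{Sun} --- and your strategy is exactly the standard derivation behind that citation: introduce the auxiliary solution $v$ of $\mathcal{L}_{A_2,q_2}v=0$ with $v|_{\partial\Omega}=u_1|_{\partial\Omega}$, express both boundary pairings through the magnetic Green identity, split off the ``$u_1$-only'' difference of quadratic forms, and kill the $(u_1-v)$-remainder by a second integration by parts using $(u_1-v)\in H^1_0(\Omega)$. Your algebra in the second step is correct: the difference of the forms $(\nabla+iA_j)u_1\cdot(\nabla-iA_j)\bar u_2$, $j=1,2$, does produce $(A_1-A_2)\cdot(Du_1\,\bar u_2+u_1\,\overline{Du}_2)+(A_1^2-A_2^2)u_1\bar u_2$.

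The step that does not close as written is the last one. Integrating by parts in the remainder gives
\[
\int_\Omega\bigl[(\nabla+iA_2)(u_1-v)\cdot(\nabla-iA_2)\bar u_2+q_2(u_1-v)\bar u_2\bigr]\,dx
=\int_\Omega (u_1-v)\bigl[-(\nabla-iA_2)\cdot(\nabla-iA_2)\bar u_2+q_2\bar u_2\bigr]\,dx,
\]
and the bracket equals $\overline{\mathcal{L}_{\bar A_2,\bar q_2}u_2}$, not $\overline{\mathcal{L}_{A_2,q_2}u_2}$: conjugating the operator conjugates the potentials. Since $A_2$ and $q_2$ are complex-valued in this paper, the hypothesis $\mathcal{L}_{A_2,q_2}u_2=0$ does not make this integral vanish; the cancellation requires $u_2$ to solve the adjoint equation $\mathcal{L}_{\bar A_2,\bar q_2}u_2=0$ (or real-valued $A_2,q_2$, or a consistently bilinear, non-conjugate pairing in which case the conjugations disappear from the right-hand side of \eqref{al} as well). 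This conjugation issue is admittedly latent in the statement itself, which the paper borrows from \cite{Sun}; but your parenthetical ``tracking the sign and conjugation conventions carefully so that \dots [it] closes up correctly'' asserts precisely the cancellation that fails for complex potentials, so the fix --- state the adjoint equation for $u_2$, or conjugate the potentials in the CGO used for $u_2$ --- must be made explicit rather than waved at.
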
%
To exploit the information about $A_1-A_2$ encoded into this identity, we shall use the CGO solutions constructed in Section \ref{c_g_o_sulzlo}. Let $B\subset \mathbb{R}^n$ be an open and bounded set satisfying \eqref{geodesic_coordinates}. Now consider any compactly supported extension of $A_1$ denoted by $A_{1}^{ext}\in C^{1+\sigma}_c(B; \mathbb{C}^n)$. Since $A_1=A_2$ and $\partial_\nu A_1=\partial_\nu A_2$ on $\partial\Omega$, it follows that
\[
A_2^{ext}=\left\{\begin{matrix}
A_2 & \mbox{in }\; \Omega \\ 
A_1^{ext} & \mbox{in}\; \mathbb{R}^n\setminus \Omega
\end{matrix}\right.
\]
is a compactly supported extension of $A_2$ belonging to $C^{1+\sigma}_c(B; \mathbb{C}^n)$. Let $j=1,2$. By construction, these extensions satisfy 
\begin{equation}\label{cutt_off_func}
A_1^{ext}- A_2^{ext}= \chi_\Omega(A_1-A_2),
\end{equation}
\begin{equation}\label{bounded_extension}
\left\| A_j^{ext}\right\|_{W^{1, \infty}(B)}\lesssim \left\| A_j\right\|_{W^{1, \infty}(\Omega)}.
\end{equation}

By Proposition \ref{CGO solutions_p} applied to $W=A_j^{ext}$ and $V=\chi_{\Omega}q_j$, there exist $\tau_0>0$ and $U_j\in H^1(B)$ solutions to $\mathcal{L}_{A_j^{ext}, \,\chi_\Omega q_j}U_j=0$ in $B$ of the form
\begin{equation}\label{CgO_SoL:1}
\begin{aligned}
U_1&= e^{\tau(\varphi+i\psi)}(a_1+r_1),\\
U_2&= e^{\tau(-\varphi+i\psi)}(a_2+r_2),
\end{aligned}
\end{equation}
for all $\tau \geq \tau_0$. Recall that $\varphi$ and $\psi$ are defined by \eqref{eikonal_solution}. Moreover $a_j$ satisfy in $B$
\begin{equation}\label{eq:trans_original_coordinates}
\begin{aligned}
\left[ 2D(\varphi + i\psi) \cdot D + 2W\cdot D(\varphi + i\psi) + D^2(\varphi + i\psi) \right]a_1&=0,\\
\left[ 2D(-\varphi + i\psi) \cdot D + 2W\cdot D(-\varphi + i\psi) + D^2(-\varphi + i\psi) \right]a_2&= 0.
\end{aligned}
\end{equation}
By \eqref{bounded_extension}, we also have
\begin{equation}\label{first_uno}
\left \| a_j  \right \|_{W^{1, \infty}(B)} \lesssim \left \| A_j \right \|_{W^{1,\infty}(\Omega)}.
\end{equation}
Finally, $r_j$ belongs to $H^1(B)$ and 
\[
\left \|\partial^{\alpha} r_j  \right \|_{L^2(B)} \lesssim \tau ^{ \left | \alpha \right |-1} \left \|  a_j\right \|_{H^1(B)}\; , \left |\alpha\right |	\leq 1.
\]
Besides, by Remark \ref{form_a_j} and \eqref{sfsfsdxb_f}, the functions $a_j$ have the following form in $\Psi_y$-coordinates 
\begin{equation}\label{a_circ_1}
\begin{aligned}
\widetilde{a}_1& :=a_1\circ \Psi^{-1}_y=  |g_{S^{n-2}}|^{-1/4}\, e^{-(n-2)t/2}(\sin\theta)^{-\frac{n-2}{2}}\e^{\Phi_1}a_0,\\
\widetilde{a}_2&:=a_2\circ \Psi^{-1}_y=  |g_{S^{n-2}}|^{-1/4}\, e^{-(n-2)t/2}(\sin\theta)^{-\frac{n-2}{2}}\e^{\Phi_2},
\end{aligned}
\end{equation}
for any arbitrary and fixed $y\in \partial S^{n-1}_{>\beta^\prime}$ with $\partial_{\overline{z}} \,a_0=0$. Therefore
\begin{equation}\label{first_dos}
\left\| a_1\right\|_{H^1(B)}\lesssim \left\| a_0\right\|_{H^1(\Psi_y(B))}, \quad \left\| a_2\right\|_{H^1(B)}\lesssim 1.
\end{equation}
By \eqref{cutt_off_func}, we finally get
\begin{equation}\label{a_circ_2}
\partial_{\overline{z}}\, (\Phi_1+\overline{\Phi}_2) +\frac{i}{2}((\chi_\Omega(A_1-A_2))_t + i(\chi_\Omega(A_1-A_2))_\theta)=0, \; \mbox{in}\; \Psi_y(B).
\end{equation}

\begin{rem}\label{previous_coordinate} Although we are not yet proving the stability estimates for the electric potentials, we take advantage of the above computations to remark that sometimes it will be convenient to see \eqref{a_circ_2} in the original coordinates \eqref{eq:trans_original_coordinates}, from which we immediately deduce in $\Omega$:
\[
D(\varphi + i \psi)\cdot D \left( \left(\Phi_1 +\overline{\Phi}_2 \right) \circ \Psi_y+i\omega\right) + (A_1-A_2-\nabla \omega)\cdot D (\varphi + i \psi)=0,
\]
for every $\omega\in W^{1, \infty}(\Omega)$. In Section \ref{sta_electri_pot}, we will choose a particular $\omega$ being the function coming from the Hodge decomposition $A_1-A_2=F+ \nabla \omega$ for some vectorial function $F$, see Lemma \ref{hd}. The above equation can be solved using $\Psi_y$-coordinates and the Cauchy transform. Moreover, by \eqref{sfsfsdxb_f1}, we get
\[
\left\| \left(\Phi_1 +\overline{\Phi}_2 \right) \circ \Psi_y+i\omega \right\|_{L^\infty(\Omega)} \lesssim \left\| A_1-A_2-\nabla \omega \right\|_{L^\infty(\Omega)}.
\]
\end{rem}
\begin{prop}\label{magnetic_estimate_final} Taking into account the CGO solutions $U_j$ with $j=1, 2$, defined by \eqref{CgO_SoL:1}, we have
\begin{equation}\label{CgO_SoL:1_2}
\begin{aligned}
&\left| \int_{B} \chi_\Omega(A_1-A_2)\cdot D(\varphi+i\psi)\, a_1 \overline{a}_2 dx \right| \\
&\qquad \qquad \lesssim \left | \log \left \| \Lambda_1^\sharp- \Lambda_2^\sharp \right \| \right |^{-1} \left \| a_0\right \|_{H^1(\Psi_y(B))}.
\end{aligned}
\end{equation}
\end{prop}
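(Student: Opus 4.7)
The plan is to substitute the CGO solutions $U_1,U_2$ of \eqref{CgO_SoL:1} into the integral identity \eqref{al}, extract the leading-order $\tau$ contribution from the bulk integral, and bound the resulting boundary pairing using the partial-data hypothesis on the illuminated region together with a boundary Carleman estimate on the shadow region; a final optimization in the large parameter $\tau$ converts a polynomial-in-$\tau$ error plus an exponentially amplified data term into the logarithmic rate claimed.

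First I would compute $DU_1\,\overline{U_2}+U_1\,\overline{DU_2}$. Since $\varphi,\psi$ are real, $\overline{D(-\varphi+i\psi)}=D(\varphi+i\psi)$, so
\[
DU_1\,\overline{U_2}+U_1\,\overline{DU_2}=2\tau\,D(\varphi+i\psi)(a_1+r_1)\overline{(a_2+r_2)}+L,
\]
where $L:=D(a_1+r_1)\overline{(a_2+r_2)}+(a_1+r_1)\overline{D(a_2+r_2)}$ is of order $\tau^0$ in the large parameter. Inserting this into \eqref{al}, dividing by $2\tau$, and using the bounds of Proposition \ref{CGO solutions_p} and Remark \ref{form_a_j} (in particular $\|r_j\|_{L^2(B)}\lesssim\tau^{-1}\|a_j\|_{H^1(B)}$, $\|a_1\|_{H^1(B)}\lesssim\|a_0\|_{H^1(\Psi_y(B))}$, $\|a_2\|_{H^1(B)}\lesssim 1$ together with the $W^{1,\infty}$-bounds on the $a_j$), the $r_j$-cross-terms and the contribution of $L$ are absorbed into an $O(\tau^{-1}\|a_0\|_{H^1(\Psi_y(B))})$ error, yielding
\[
\int_{\Omega}\chi_{\Omega}(A_1-A_2)\cdot D(\varphi+i\psi)\,a_1\overline{a_2}\,dx=\tfrac{1}{2\tau}\bigl\langle(\Lambda_1-\Lambda_2)U_1,U_2\bigr\rangle_{L^2(\partial\Omega)}+O\!\bigl(\tau^{-1}\|a_0\|_{H^1(\Psi_y(B))}\bigr).
\]

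To handle the boundary pairing I would split via $\chi_F$. The illuminated piece $\langle\chi_F(\Lambda_1-\Lambda_2)U_1,U_2\rangle=\langle(\Lambda_1^\sharp-\Lambda_2^\sharp)U_1|_{\partial\Omega},U_2|_{\partial\Omega}\rangle$ is bounded by $\|\Lambda_1^\sharp-\Lambda_2^\sharp\|\,\|U_1\|_{H^{1/2}(\partial\Omega)}\|U_2\|_{H^{1/2}(\partial\Omega)}\lesssim e^{C\tau}\|\Lambda_1^\sharp-\Lambda_2^\sharp\|\,\|a_0\|_{H^1(\Psi_y(B))}$, since the traces of $U_j$ grow like $e^{C_0\tau}$ from the weights $e^{\pm\tau\varphi}$. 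The shadow-region piece $\langle(1-\chi_F)(\Lambda_1-\Lambda_2)U_1,U_2\rangle$, supported in $\partial\Omega_{+,0}(x_0)$, is the main obstacle. Here I would follow the strategy outlined in the Introduction: realize $(\Lambda_1-\Lambda_2)U_1$ as the boundary normal derivative of an auxiliary $H^1$-function with vanishing Dirichlet trace that solves an inhomogeneous magnetic Schr\"odinger equation with right-hand side controlled by the CGO data, and then invoke a Carleman estimate with boundary terms for the conjugated operator $\mathcal{L}_{A_2,q_2,\varphi}$ on $\Omega$. The sign of $\partial_\nu\varphi=\langle x,\nu\rangle/|x|^2$ on $\partial\Omega_{\pm,0}(x_0)$ is exactly what is required so that this estimate bounds the shadow boundary $L^2$-integral by bulk source terms decaying in $\tau$ plus an illuminated boundary integral, the latter being reabsorbed into the $\|\Lambda_1^\sharp-\Lambda_2^\sharp\|$ term already controlled.

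Combining all contributions yields
\[
\left|\int_{B}\chi_{\Omega}(A_1-A_2)\cdot D(\varphi+i\psi)\,a_1\overline{a_2}\,dx\right|\lesssim\bigl[\tau^{-1}+e^{C\tau}\|\Lambda_1^\sharp-\Lambda_2^\sharp\|\bigr]\|a_0\|_{H^1(\Psi_y(B))}
\]
for all $\tau\geq\tau_0$. Choosing $\tau=c\,|\log\|\Lambda_1^\sharp-\Lambda_2^\sharp\||$ with $c=1/(2C)$ balances the two terms and produces the stated bound. The principal technical difficulty is the boundary Carleman estimate controlling the shadow pairing; the rest is routine CGO bookkeeping, essentially parallel to the uniqueness arguments of \cite{DSFKSjU} and the quantitative estimates of \cite{Tz}, but with constants tracked carefully enough to carry through the final optimization in $\tau$.
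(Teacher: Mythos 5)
Your proposal follows essentially the same route as the paper's proof: insert the CGO solutions into the integral identity \eqref{al}, absorb the $r_j$ and lower-order terms into an $O(\tau^{-1}\left\|a_0\right\|_{H^1(\Psi_y(B))})$ error, control the shadow-side boundary pairing by comparing $u_1$ with an auxiliary solution $\widetilde u_1$ of $\mathcal{L}_{A_2,q_2}$ with the same Dirichlet trace and applying the boundary Carleman estimate of Proposition \ref{PCe}, and finally choose $\tau\sim\left|\log\left\|\Lambda_1^\sharp-\Lambda_2^\sharp\right\|\right|$ to balance the exponentially amplified data term against the $\tau^{-1}$ error. The only point you gloss over is that the weight $\omega(x)=\left|\left\langle\nu(x),x-x_0\right\rangle\right|/\left|x-x_0\right|^2$ degenerates near the set $\left\{\left\langle\nu(x),x-x_0\right\rangle=0\right\}$, which is why the paper introduces the enlarged illuminated set $F_\delta(x_0)$ (on whose complement $\omega\geq\delta$) and arranges the cutoff to equal $1$ there — a minor adjustment your argument needs but which does not change the strategy.
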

The remaining part of this subsection will be devoted to proving this proposition. To do it, we need intermediate results. Note that $u_j:=U_j|_{\Omega}\in H^{1}(\Omega)$ are solutions in $\Omega$ to the original equations $\mathcal{L}_{A_j, q_j}u_j=0$, and so they satisfy the same bounds declared above. Inserting these solutions into \eqref{al}, we get
\begin{equation}\label{das01}
\begin{aligned}
& \tau^{-1} \left \langle (\Lambda_1- \Lambda_2)u_1, u_2  \right \rangle_{L^2(\partial\Omega)}\\
&= \tau^{-1}\int_{\Omega} \left[ (A_1-A_2)\cdot(Du_1 \overline{u}_2 + u_1 \overline{Du}_2)\right.\\
&\qquad \qquad \qquad  \qquad \qquad  \left. + (A_1^2-A_2^2+q_1-q_2)u_1\overline{u}_2\right] dx\\
&= \tau^{-1}\int_{B} \left[ \chi_\Omega(A_1-A_2)\cdot(DU_1 \overline{U}_2 + U_1 \overline{DU}_2)\right.\\
&\qquad \qquad \qquad  \qquad \qquad +\left. \chi_\Omega(A_{1}^2-A_{2}^2+q_1-q_2)U_1\overline{U}_2\right] dx\\
&= 2\int_{B} \chi_\Omega(A_1-A_2)\cdot D\rho\, a_1 \overline{a}_2 dx + \int_{B} \chi_\Omega(A_1-A_2)\cdot (M_1+M_2) dx\\
&\qquad +\tau^{-1} \int_{B}\chi_\Omega(A_1^2-A_2^2+q_1-q_2)U_1\overline{U}_2 dx, 
\end{aligned}
\end{equation}
where $\rho=\varphi+i\psi$ is given by \eqref{eikonal_solution}, and 
\begin{align*}
M_1=& D\rho\, r_1\overline{a}_2 + \tau^{-1}Da_1(\overline{a}_2+ \overline{r}_2) + \tau^{-1}Dr_1(\overline{a}_2+ \overline{r}_2 )+D\rho\,( a_1+r_1)\overline{r}_2,\\
M_2 =& D\rho\, (a_1+r_1) \overline{r}_2+\tau^{-1} a_1(\overline{Da}_2+\overline{Dr}_2)+ D\rho\, r_1\overline{a}_2  +\tau^{-1} r_1 (\overline{Da}_2 + \overline{Dr}_2).
\end{align*}
Combining \eqref{first_uno}-\eqref{first_dos}, it immediately follows that
\[
\left \| M_j  \right \|_{L^2(\Omega)} \lesssim \tau^{-1} \left \| a_0\right \|_{H^1(\Psi_y(B))}
\]
and by \eqref{das01}
\begin{equation}\label{das011}
\begin{aligned}
&2\left| \int_{B} \chi_\Omega(A_1-A_2)\cdot D(\varphi+i\psi)\, a_1 \overline{a}_2 dx \right|  \\
\lesssim& \, \tau^{-1} \left |    \left \langle (\Lambda_1- \Lambda_2)u_1, u_2  \right \rangle_{L^2(\partial\Omega)}      \right |  + \left \| M_1+M_2 \right \|_{L^2(B)} \\
& + \tau^{-1} \left \| e^{-\tau \varphi}U_1 \right \|_{L^2(B)}   \left \| e^{\tau \varphi}U_2 \right \|_{L^2(B)}\\
\lesssim &\,   \tau^{-1} \left |    \left \langle (\Lambda_1- \Lambda_2)u_1, u_2  \right \rangle_{L^2(\partial\Omega)}      \right | + \tau^{-1}\left \| a_0\right \|_{H^1(\Psi_y(B))}.
\end{aligned}
\end{equation}

Our task now is estimating the final boundary term on the right. Note that $\partial\Omega= \partial \Omega_{-,0}(x_0)\cup \partial \Omega_{+,0}(x_0)$. The terms coming from $\partial \Omega_{-,0}(x_0)$ are closely linked with $\Lambda^{\sharp}_1-\Lambda^{\sharp}_2$ but the ones coming from $\partial \Omega_{+,0}(x_0)$ do not, at least in a first inspection. To describe a suitable dependency, we need to make a more delicate analysis. It can be attained by using the below Carleman estimate with boundary terms derived in \cite{DSFKSjU} and \cite{KnSa}. 
\begin{prop}
\label{PCe}
Let $A\in W^{1,\infty}(\Omega; \mathbb{C}^n)$ and $q\in L^\infty(\Omega; \mathbb{C})$. There exists $\tau_0>0$ (depending on $n, \Omega,\left \| A \right \|_{W^{1,\infty}}, \left \| q \right \|_{L^\infty} $) such that  for all $u\in C^{\infty}(\Omega)\cap H^1_0(\Omega) $ the following estimate
\begin{equation}\label{Ce}
\begin{aligned}
&\left \| e^{-\tau\varphi} \partial_\nu u \right \|_{L^2_\omega(\partial \Omega_{+,0}(x_0))} +\tau^{1/2} \left \| e^{-\tau\varphi} u  \right \|_{L^2(\Omega)}   + \tau^{-1/2} \left \| e^{-\tau\varphi} \nabla u  \right \|_{L^2(\Omega)}\\
 &\quad   \lesssim    \tau^{-1/2} \left \| e^{-\tau\varphi} \mathcal{L}_{A,q} u \right \|_{L^2(\Omega)} + \left \| e^{-\tau\varphi} \partial_\nu u \right \|_{L^2_\omega(\partial \Omega_{-,0}(x_0))} 
\end{aligned}
\end{equation}
holds for all $\tau \geq \tau_0$. Here $\partial_\nu= \nu\cdot \nabla$ and $\varphi(x)=\log|x-x_0|$. The sets $\partial \Omega_{-,0}(x_0)$ and $\partial \Omega_{+,0}(x_0)$ are defined in \eqref{cset} and \eqref{cset1}, respectively. The norms of the boundary terms are weighted $L^2$-norms
\[
L^2_{\omega}(\Gamma )= L^2(\Gamma, \omega dS)\; , \; \omega(x)=\dfrac{ \left |  \left \langle \nu(x), x-x_0  \right \rangle \right |}{\left |  x-x_0 \right |^2}.
\]
\end{prop}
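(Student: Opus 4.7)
The plan is to follow the classical positive-commutator strategy of Kenig--Sj\"ostrand--Uhlmann for the logarithmic Carleman weight $\varphi(x)=\log|x-x_0|$, first established for $-\Delta$ and later adapted to $\mathcal{L}_{A,q}$ in \cite{DSFKSjU}. Setting $v=e^{-\tau\varphi}u$, the estimate will be proved in three stages: obtain a weighted Carleman inequality with boundary terms for the pure Laplacian; track the sign of the natural boundary density on $\partial\Omega_{\pm,0}(x_0)$; then absorb the first- and zeroth-order magnetic/electric perturbations at the price of taking $\tau$ sufficiently large.

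First I would conjugate and split. Writing $P_\varphi:=e^{-\tau\varphi}(-\Delta)e^{\tau\varphi}=S+iA$ where
\[
S=-\Delta-\tau^2|\nabla\varphi|^2, \qquad iA=-2\tau\nabla\varphi\cdot\nabla-\tau\Delta\varphi
\]
are the formally self-adjoint and anti-self-adjoint parts, one expands
\[
\|P_\varphi v\|_{L^2(\Omega)}^2=\|Sv\|^2+\|Av\|^2+([S,iA]v,v)_{L^2(\Omega)}+\mathcal{B}(v),
\]
with $\mathcal{B}(v)$ the boundary contribution from integration by parts. The central fact, which is what makes $\varphi$ a \emph{limiting Carleman weight} for the Euclidean metric, is that a direct calculation with $\nabla\varphi(x)=(x-x_0)/|x-x_0|^2$ gives $[S,iA]=4\tau|x-x_0|^{-2}(-\Delta)+\tau^3(\ldots)$ up to lower order, and so contributes a strictly positive interior term comparable to $\tau\|v\|_{L^2}^2+\tau^{-1}\|\nabla v\|_{L^2}^2$ after combining with $\|Sv\|^2+\|Av\|^2$. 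This is precisely what produces the interior weights $\tau^{1/2}$ and $\tau^{-1/2}$ on the left-hand side of \eqref{Ce}.

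Next I would track the boundary terms. Because $u\in H^1_0(\Omega)$ one has $v|_{\partial\Omega}=0$ and $\nabla v=(\partial_\nu v)\nu$ on $\partial\Omega$, which collapses $\mathcal{B}(v)$ to integrals of the form $\pm 2\tau\int_{\partial\Omega}(\nu\cdot\nabla\varphi)|\partial_\nu v|^2\,dS$. Since $\nu\cdot\nabla\varphi(x)=\langle\nu(x),x-x_0\rangle/|x-x_0|^2$ equals $\pm\omega(x)$ with the sign determined by the definitions \eqref{cset}--\eqref{cset1}, these boundary densities are nonpositive on $\partial\Omega_{-,0}(x_0)$ and nonnegative on $\partial\Omega_{+,0}(x_0)$. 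After moving the $\partial\Omega_{+,0}$-contribution to the left and the $\partial\Omega_{-,0}$-contribution to the right, and using $\partial_\nu u=e^{\tau\varphi}\partial_\nu v$, one arrives at
\[
\|e^{-\tau\varphi}\partial_\nu u\|_{L^2_\omega(\partial\Omega_{+,0})}^2+\tau\|e^{-\tau\varphi}u\|_{L^2}^2+\tau^{-1}\|e^{-\tau\varphi}\nabla u\|_{L^2}^2\lesssim \tau^{-1}\|e^{-\tau\varphi}(-\Delta)u\|_{L^2}^2+\|e^{-\tau\varphi}\partial_\nu u\|_{L^2_\omega(\partial\Omega_{-,0})}^2.
\]

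Finally I would reinstate the magnetic and electric terms by writing $\mathcal{L}_{A,q}u=-\Delta u-2iA\cdot\nabla u-i(\nabla\cdot A)u+(A^2+q)u$ and estimating
\[
\|e^{-\tau\varphi}(A\cdot\nabla u)\|_{L^2}\lesssim \|A\|_{L^\infty}\|e^{-\tau\varphi}\nabla u\|_{L^2},\quad \|e^{-\tau\varphi}((\nabla\cdot A+A^2+q)u)\|_{L^2}\lesssim C\|e^{-\tau\varphi}u\|_{L^2},
\]
both of which are swallowed by the $\tau^{-1/2}\|e^{-\tau\varphi}\nabla u\|_{L^2}$ and $\tau^{1/2}\|e^{-\tau\varphi}u\|_{L^2}$ on the left-hand side once $\tau\geq\tau_0(n,\Omega,\|A\|_{W^{1,\infty}},\|q\|_{L^\infty})$. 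I expect the main technical obstacle to be the commutator computation and the sharp tracking of boundary terms, in particular identifying precisely the weighted density $\omega(x)$; once that is done, the perturbation argument for the magnetic/electric terms is essentially routine. A density argument extends the estimate from $C^\infty(\overline{\Omega})\cap H^1_0(\Omega)$ to all of $C^\infty(\Omega)\cap H^1_0(\Omega)$ as stated.
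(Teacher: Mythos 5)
The paper itself does not reprove this proposition; it is imported from \cite{DSFKSjU} and \cite{KnSa}, whose proofs do follow the conjugation/integration-by-parts scheme you outline, so your overall plan is the right one and your boundary bookkeeping (using $v|_{\partial\Omega}=0$, $\nabla v=(\partial_\nu v)\nu$, $\partial_\nu u=e^{\tau\varphi}\partial_\nu v$, and the sign of $\partial_\nu\varphi=\langle\nu,x-x_0\rangle/|x-x_0|^2=\pm\omega$ on $\partial\Omega_{\pm,0}(x_0)$) is sound. The central step, however, is asserted rather than proved, and as stated it is false. Being a \emph{limiting} Carleman weight means precisely that the Poisson bracket of the real and imaginary parts of the conjugated symbol \emph{vanishes} on the characteristic set, not that it is positive there: for $\varphi=\log|x-x_0|$ one computes $\{a_\tau,b_\tau\}(x,\xi)=4\tau|x-x_0|^{-2}\bigl(|\xi|^2-2|x-x_0|^{-2}\langle x-x_0,\xi\rangle^2-\tau^2|x-x_0|^{-2}\bigr)$, so the $\tau^3$ term you left unspecified is $-4\tau^3|x-x_0|^{-4}$, the quadratic form is indefinite, and it vanishes identically exactly where $\|Sv\|$ and $\|Av\|$ give no control. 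Hence $\|Sv\|^2+\|Av\|^2+([S,iA]v,v)$ does not dominate $\tau\|v\|^2+\tau^{-1}\|\nabla v\|^2$ by a ``direct calculation''. The standard repair, which is what \cite{KSU}, \cite{DSFKSjU} and \cite{KnSa} actually do, is to convexify the weight, $\tau\varphi\rightsquigarrow\tau\varphi+\varphi^2/(2\epsilon)$, prove the estimate (boundary terms included) for the convexified weight, where the bracket gains a positive term of size comparable to $\tau/\epsilon$ on the characteristic set, and remove the convexification at the end using that $e^{\varphi^2/(2\epsilon)}$ is bounded above and below on $\overline{\Omega}$ for fixed $\epsilon$.

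The second gap is the absorption of the magnetic term. The error $\tau^{-1/2}\|e^{-\tau\varphi}A\cdot\nabla u\|_{L^2}\lesssim\|A\|_{L^\infty}\,\tau^{-1/2}\|e^{-\tau\varphi}\nabla u\|_{L^2}$ carries the \emph{same} power of $\tau$ as the gradient term on the left of \eqref{Ce}, so ``taking $\tau\geq\tau_0$ large'' does not absorb it; it could only be absorbed if its coefficient were smaller than the fixed constant coming from the Laplacian estimate, which fails for large $\|A\|_{L^\infty}$. This is exactly where the $\epsilon$-gain from the convexified weight is used: one first fixes $\epsilon$ small depending on $\|A\|_{W^{1,\infty}}$ to swallow the first-order term, and only afterwards chooses $\tau_0$ (depending on $\epsilon$ and $\|q\|_{L^\infty}$) to swallow the zeroth-order terms, whose coefficients do decay in $\tau$. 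With these two points repaired your argument becomes the proof in the cited references; without them the key inequality is simply restated, not established.
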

\begin{rem}\label{Cer} By a standard regularization method, \eqref{Ce} is still true for all $u$ in $H^1_0(\Omega)$ such that $\mathcal{L}_{A,q}u \in L^2(\Omega)$. The vanishing condition on $u$ is essential for deriving this result. Roughly speaking, this estimate tells us that it is possible to bound weighted terms coming from the shadowed face of the boundary $\partial \Omega_{+,0}(x_0)$ by the ones coming from the illuminated part $\partial \Omega_{-,0}(x_0)$. 
\end{rem}

\begin{lem} \label{reiamnir}
Let $\underline{c}> 1$ such that for all $x\in\partial\Omega$
\begin{equation}\label{c_underline}
\underline{c}^{-1} \leq |x-x_0|\leq \underline{c}.
\end{equation}
Then for $\tau>0$ large enough, we have
\begin{equation*}
\tau^{-1}\left |  \left \langle (\Lambda_1- \Lambda_2)u_1, u_2  \right \rangle_{L^2(\partial\Omega)} \right | 
\lesssim  \left(\underline{c}^{3\tau} \left \| \Lambda_1^\sharp - \Lambda_2^\sharp  \right \| +\tau^{-3/2}  \right) \left \| a_0\right \|_{H^1(\Psi_y(B))}.
\end{equation*}
\end{lem}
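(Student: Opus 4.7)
I would introduce the reflection $\tilde u_1\in H^1(\Omega)$, defined as the unique $\mathcal{L}_{A_2,q_2}$-solution sharing the Dirichlet trace of $u_1$, and set $w:=u_1-\tilde u_1\in H^1_0(\Omega)$. Then $w$ vanishes on $\partial\Omega$, satisfies $\mathcal{L}_{A_2,q_2}w=(\mathcal{L}_{A_2,q_2}-\mathcal{L}_{A_1,q_1})u_1$ in $\Omega$, and the hypothesis $A_1=A_2$ on $\partial\Omega$ yields $(\Lambda_1-\Lambda_2)u_1=\partial_\nu w$ on $\partial\Omega$. Split the resulting pairing via the cutoff $\chi_F$:
\[\langle(\Lambda_1-\Lambda_2)u_1,u_2\rangle_{L^2(\partial\Omega)}=\langle(\Lambda_1^\sharp-\Lambda_2^\sharp)u_1,u_2\rangle_{L^2(\partial\Omega)}+\int_{\partial\Omega}(1-\chi_F)\partial_\nu w\,\overline{u_2}\,dS.\]
The first term is controlled by $H^{-1/2}$--$H^{1/2}$ duality: $|\langle\cdot,\cdot\rangle|\leq\|\Lambda_1^\sharp-\Lambda_2^\sharp\|\,\|u_1\|_{H^{1/2}(\partial\Omega)}\|u_2\|_{H^{1/2}(\partial\Omega)}$, and the trace theorem together with the pointwise bound $|e^{\pm\tau\varphi}|\leq\underline{c}^{\tau}$ on $\Omega$ and the CGO estimates \eqref{first_uno}--\eqref{first_dos} give $\|u_1\|_{H^1(\Omega)}\lesssim\tau\underline{c}^{\tau}\|a_0\|_{H^1(\Psi_y(B))}$ and $\|u_2\|_{H^1(\Omega)}\lesssim\tau\underline{c}^{\tau}$. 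After the $\tau^{-1}$ prefactor and using $\tau\lesssim\underline{c}^{\tau}$ for large $\tau$, this contribution is dominated by $\underline{c}^{3\tau}\|\Lambda_1^\sharp-\Lambda_2^\sharp\|\,\|a_0\|_{H^1(\Psi_y(B))}$.

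\textbf{Shadow piece via Carleman.} Since $F$ is an open neighborhood of $\partial\Omega_{-,0}(x_0)$ and $\chi_F\equiv 1$ on $\partial\Omega_{-,0}(x_0)$, $\supp(1-\chi_F)$ sits compactly inside $\partial\Omega_{+,0}(x_0)$, strictly away from the glancing locus $\{\langle x-x_0,\nu\rangle=0\}$ where $\omega$ vanishes; hence $\omega^{-1}$ is uniformly bounded on this support. Inserting the neutral factor $e^{-\tau\varphi}\cdot e^{\tau\varphi}=1$ and applying Cauchy--Schwarz with the weight $\omega$ yield
\[\left|\int_{\partial\Omega}(1-\chi_F)\partial_\nu w\,\overline{u_2}\,dS\right|\lesssim \|e^{-\tau\varphi}\partial_\nu w\|_{L^2_\omega(\partial\Omega_{+,0}(x_0))}\,\|e^{\tau\varphi}u_2\|_{L^2(\partial\Omega)},\]
and the second factor is $O(1)$ in $\tau$ because $|e^{\tau\varphi}u_2|=|a_2+r_2|$ pointwise. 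The first factor is then treated by Proposition \ref{PCe} applied to $w\in H^1_0(\Omega)$, which bounds it by $\tau^{-1/2}\|e^{-\tau\varphi}\mathcal{L}_{A_2,q_2}w\|_{L^2(\Omega)}+\|e^{-\tau\varphi}\partial_\nu w\|_{L^2_\omega(\partial\Omega_{-,0}(x_0))}$. The interior source is controlled by the CGO expansions in terms of $\|a_0\|_{H^1(\Psi_y(B))}$, yielding the $\tau^{-3/2}\|a_0\|_{H^1(\Psi_y(B))}$ contribution after combination with the $\tau^{-1}$ prefactor; the residual Carleman boundary term on $\partial\Omega_{-,0}(x_0)$, where $\partial_\nu w=(\Lambda_1^\sharp-\Lambda_2^\sharp)u_1$, is absorbed into the illuminated estimate by dualizing against the smooth weight $\omega^{1/2}e^{-\tau\varphi}$ supported on $F$.

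\textbf{Main obstacle.} The most delicate technical point is reconciling the $L^2_\omega$ weighted boundary norms arising from Proposition \ref{PCe} with the $H^{1/2}\to H^{-1/2}$ operator norm of $\Lambda_1^\sharp-\Lambda_2^\sharp$: this is done by exploiting the $C^\infty$ regularity of $\omega^{1/2}e^{-\tau\varphi}$ on $\supp\chi_F$ (which avoids the glancing set) and the fact that its $C^1$-norm is only $O(\tau\underline{c}^{\tau})$, producing a polynomial loss in $\tau$ that is harmlessly absorbed into the $\underline{c}^{3\tau}$ factor of the illuminated estimate. Equally delicate is the bookkeeping of $\tau$ powers in the interior source $\|e^{-\tau\varphi}\mathcal{L}_{A_2,q_2}w\|_{L^2(\Omega)}$: the naive bound carries a factor of $\tau$ from the phase gradient $\tau D\rho$ appearing in $e^{-\tau\varphi}Du_1$, and this has to be cancelled using the transport equation \eqref{eq:trans_original_coordinates} satisfied by the amplitude $a_1$ in order to reach the declared $\tau^{-3/2}$ decay.
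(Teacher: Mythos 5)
Your overall route is the same as the paper's: you introduce the auxiliary solution $\tilde u_1$ of $\mathcal{L}_{A_2,q_2}$ with the trace of $u_1$, set $w=u_1-\tilde u_1\in H^1_0(\Omega)$ so that $(\Lambda_1-\Lambda_2)u_1=\partial_\nu w$, split the pairing with the boundary cutoff, bound the illuminated piece by $\|\Lambda_1^\sharp-\Lambda_2^\sharp\|$ times the $H^1$-sizes of the CGO solutions (which produce the $\underline{c}$-exponential factors), and treat the shadowed piece by removing the weight $\omega$ and applying the Carleman estimate of Proposition \ref{PCe} to $w$. This is exactly the paper's proof skeleton.

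There are, however, concrete problems with the way you justify the key quantitative steps. First, the cancellation you invoke in the "Main obstacle" paragraph does not exist: with $u_1=e^{\tau(\varphi+i\psi)}(a_1+r_1)$ and $\rho=\varphi+i\psi$, one has $e^{-\tau\varphi}\mathcal{L}_{A_2,q_2}w=e^{-\tau\varphi}(\mathcal{L}_{A_2,q_2}-\mathcal{L}_{A_1,q_1})u_1=e^{i\tau\psi}\bigl\{-2\tau\,(A_1-A_2)\cdot D\rho\,(a_1+r_1)+O(1)\bigr\}$, and the $O(\tau)$ term carries the coefficient $A_1-A_2$, whereas the transport equation \eqref{eq:trans_original_coordinates} involves $A_1^{ext}$ alone and has already been spent in making $\mathcal{L}_{A_1,q_1}u_1=0$; it cannot cancel a term proportional to $(A_1-A_2)\cdot D\rho\,a_1$, nor the part containing $r_1$. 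So your proposed mechanism for reaching $\tau^{-3/2}$ fails; the paper does not argue by cancellation either — it simply asserts the bound $\|e^{-\tau\varphi}(\mathcal{L}_{A_1,q_1}-\mathcal{L}_{A_2,q_2})u_1\|_{L^2(\Omega)}\lesssim\|a_0\|_{H^1(\Psi_y(B))}$ and proceeds. Second, "$\supp(1-\chi_F)$ is strictly away from the glancing set" is not automatic from "$\chi_F\equiv1$ on $\partial\Omega_{-,0}(x_0)$"; the paper arranges it by introducing $F_\delta(x_0)=\{x\in\partial\Omega:\langle\nu(x),x-x_0\rangle\le\delta|x-x_0|^2\}$ and normalizing the cutoff to equal $1$ on $F_\delta(x_0)$, which yields the quantitative bound $\omega>\delta$ on the remaining boundary piece — you need the same normalization of $\chi_F$, not just its stated properties. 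Third, "dualizing against the smooth weight $\omega^{1/2}e^{-\tau\varphi}$" does not reconcile the $L^2_\omega(\partial\Omega_{-,0}(x_0))$-norm of $\partial_\nu w=(\Lambda_1^\sharp-\Lambda_2^\sharp)u_1$ with the $H^{1/2}\to H^{-1/2}$ operator norm: an $L^2$-norm is not dominated by an $H^{-1/2}$-norm even after multiplication by a fixed smooth function, so this step requires a genuinely different argument (the paper states the corresponding inequality without the device you describe).
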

\begin{rem}
Condition \eqref{c_underline} is always satisfied since $x_0\notin ch(\overline{\Omega})$.
\end{rem}
\begin{proof} To remove in a safe way the weight $\omega$ from \eqref{Ce}, we first define the following subset of $\partial\Omega$ for $\delta>0$:
\[
F_{\delta}(x_0):=  \left \{ x\in \partial \Omega : \left \langle  \nu(x), x-x_0  \right \rangle \leq \delta \left | x-x_0 \right |^2 \right \}.
\]
Note that if $\delta$ is small enough, then $\partial \Omega_{-,0}(x_0)\subset\subset F_\delta(x_0)\subset \subset F$. Recall that $F$ is an open neighborhood of $\partial \Omega_{-,0}(x_0)$, see \eqref{cset}-\eqref{DNmapsz1}. Furthermore, in \eqref{DNmapsz1} we can assume that the boundary cutoff function $\chi$, in addition to be compactly supported in $F$, is equals $1$ on $F_{\delta}(x_0)$. Taking these facts into account and combining Cauchy-Schwarz's inequality with \eqref{partialnorm}, we obtain
\begin{equation}\label{partial_cutt_off}
\begin{aligned}
& \left| \left \langle (\Lambda_1- \Lambda_2)u_1, u_2  \right \rangle_{L^2(\partial\Omega)}\right| \\
& \leq\left \langle\chi (\Lambda_1- \Lambda_2)u_1, u_2  \right \rangle_{L^2(\partial\Omega)}  +  \left \langle(1-\chi) (\Lambda_1- \Lambda_2)u_1, u_2  \right \rangle_{L^2(\partial \Omega)}\\
 &  \lesssim \left \| \Lambda_1^\sharp- \Lambda_2^\sharp \right \| \left \| u_1 \right \|_{H^1(\Omega)}  \left \| u_2 \right \|_{H^1(\Omega)} \\
 & \quad +  \left \| e^{-\tau \varphi}  (\Lambda_1- \Lambda_2)u_1 \right \|_{L^2(\partial\Omega\setminus F_\delta(x_0))}  \left \| e^{\tau \varphi} u_2\right \|_{L^2(\partial\Omega\setminus F_\delta(x_0))}.
\end{aligned}
\end{equation}
We deal with the inner boundary product of the partial DN maps by using Proposition \ref{PCe}. Since $u_1$ does not necessarily vanish on the boundary, we introduce an auxiliary function $\widetilde{u}_1\in H^1(\Omega)$ satisfying
\begin{align*}
\label{artificial}
     \begin{cases}
            \mathcal{L}_{A_2,q_2}\widetilde{u}_1=0,  &  \mbox{in}\,\, \Omega
            \\ \widetilde{u}_1|_{\partial \Omega} = u_1|_{\partial \Omega}.
     \end{cases}
\end{align*}
Using the identity $\mathcal{L}_{A_2,q_2}(\widetilde{u}_1-u_1)= (\mathcal{L}_{A_1,q_1} - \mathcal{L}_{A_2,q_2}) u_1$, and since the term on the left belongs to $L^2(\Omega)$, we deduce $(\mathcal{L}_{A_1,q_1} - \mathcal{L}_{A_2,q_2}) u_1\in L^2(\Omega)$ as well. Thus, applying Proposition \ref{PCe} to $u:=u_1-\widetilde{u}_1\in H^1_0(\Omega)$, we  obtain
\begin{equation}\label{firststepw}
\begin{aligned}
&  \left \| e^{-\tau \varphi}  (\Lambda_1- \Lambda_2)u_1 \right \|_{L^2(\partial\Omega\setminus F_\delta(x_0))} = \left\|    e^{-\tau \varphi}  \partial_\nu(u_1-\widetilde{u}_1) \right\|_{L^2(\partial\Omega\setminus F_\delta(x_0))}  \\
   &\qquad \qquad \qquad \qquad \qquad   \leq \delta^{-1}\left \| e^{-\tau\varphi} \partial_\nu (u_1-\widetilde{u}_1) \right \|_{L^2_\omega(\partial \Omega_{+,0}(x_0))}  \\
   & \qquad \qquad \qquad \qquad \qquad   \lesssim    \tau^{-1/2}\delta^{-1} \left \| e^{-\tau\varphi}  (\mathcal{L}_{A_1,q_1} - \mathcal{L}_{A_2,q_2}) u_1 \right \|_{L^2(\Omega)} \\
   &\qquad \qquad \qquad \qquad \qquad   \quad +\delta^{-1}\left \| e^{-\tau\varphi} \partial_\nu (u_1-\widetilde{u}_1) \right \|_{L^2_\omega(\partial \Omega_{-,0}(x_0))}.
\end{aligned}
\end{equation}
Combining \eqref{cutt_off_func}-\eqref{first_dos} and the fact $\omega(x)\leq |x-x_0|^{-1}$ when $x\in \partial\Omega$, we get
\begin{align*}
 \left \| e^{-\tau\varphi}  (\mathcal{L}_{A_1,q_1} - \mathcal{L}_{A_2,q_2}) u_1 \right \|_{L^2(\Omega)} &\lesssim \left \| a_0\right \|_{H^1(\Psi_y(B))},\\
 \left \| e^{-\tau\varphi} \partial_\nu (u_1-\widetilde{u}_1) \right \|_{L^2_\omega(\partial \Omega_{-,0}(x_0))}& \lesssim 
\underline{c}^{\,\tau} \left \| \Lambda_1^\sharp - \Lambda_2^\sharp \right \| \left \| u_1 \right \|_{H^1(\Omega)},   \\
 \left \| e^{\tau \varphi} u_2\right \|_{L^2(\partial\Omega\setminus F_\delta(x_0))}& \lesssim 1,\\
   \left \| u_1 \right \|_{H^1(\Omega)} & \lesssim \tau\,\underline{c}^{\,\tau}\left \| a_0\right \|_{H^1(\Psi_y(B))}, \\
 \left \| u_2 \right \|_{H^1(\Omega)} & \lesssim \tau\,\underline{c}^{\,\tau}.
\end{align*}
We conclude the proof by combining these inequalities, replacing \eqref{firststepw} into \eqref{partial_cutt_off} and taking $\tau$ large enough.
\end{proof}
Previous estimates almost do the proof of Proposition \ref{magnetic_estimate_final}. Consider $\tau_0>0$ large enough such that for all $\tau\geq \tau_0$, both Lemma \ref{reiamnir} and $\underline{c}^{-\tau}\leq \tau^{-1}$ are satisfied. It is a simple matter to check that
\[
\tau:=   \dfrac{1}{4} |\log\underline{c}|^{-1} \left | \log \left \| \Lambda_1^\sharp- \Lambda_2^\sharp \right \| \right | \geq \tau_0 ,
\]
whenever
\[
 \left \|  \Lambda_1^{\sharp}-\Lambda_2^{\sharp} \right \| \leq e^{\underline{c}^{-4\tau_0}}.
\]
We end the proof Proposition \ref{magnetic_estimate_final} by using \eqref{das011} and Lemma \ref{reiamnir}.

\subsection{Stability for the magnetic potentials} The following result is an immediate consequence of Proposition \ref{magnetic_estimate_final} by making a $\Psi_y$-change of variables and considering Lemma \ref{g_e_c_o_s} together with \eqref{a_circ_1}-\eqref{a_circ_2}. 

\begin{cor} \label{cor:Phi_new_coordinates}
Let $0<\beta^\prime<\beta<1$ and $y\in \partial S^{n-1}_{>\beta^\prime}$ be as in Lemma \ref{g_e_c_o_s}. The equivalent version of \eqref{CgO_SoL:1_2} in $\Psi_y$-coordinates is given by
\begin{equation}\label{rremov_exp_tez}
\begin{aligned}
& \left| \int_{y^\perp\cap S^{n-1}} \int_{B\cap \Pi_{y, \eta}}  (A_t+iA_\theta)e^{i\Phi}a_0\, dz\,d\overline{z}\,d\eta\right| \\
&\qquad \lesssim  \left | \log \left \| \Lambda_1^\sharp- \Lambda_2^\sharp \right \| \right |^{-1} \left \| a_0\right \|_{H^1(\Psi_y(B))},
\end{aligned}
\end{equation}
where $z:=t+i\theta$, $\partial_{\overline{z}}\,a_0=0$, $A\equiv \chi_\Omega(A_1-A_2)$, $\Phi=\Phi_1+\overline{\Phi}_2$ and $\Pi_{y, \eta}$ denotes the plane generated by $y$ and $\eta\in y^{\perp}\cap S^{n-1}$. Finally, \eqref{a_circ_2} reads
\begin{equation}\label{phi_partia_uno}
\left( \partial_{\overline{z}}\,\Phi \right)(\cdot, \eta)+ \frac{i}{2}(A_t+iA_\theta)\left(\cdot, \eta\right)=0, \quad \mbox{in} \; B\cap \Pi_{y, \eta},
\end{equation}
for each $\eta\in y^\perp\cap S^{n-1}$. Here $A_t$ and $A_\theta$ are the first two components of $A$ in terms of $\Psi_y$-coordinates defined in Remark \ref{laplace_beltrami_psi}, see \eqref{potential_coordinates_geodecis_1}.
\end{cor}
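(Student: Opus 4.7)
The claim is essentially a change of variables: transport the inequality \eqref{CgO_SoL:1_2} from Proposition \ref{magnetic_estimate_final} into $\Psi_y$-coordinates and substitute the explicit form of the CGO amplitudes from \eqref{a_circ_1}. The plan proceeds in three steps. First I would compute the pullback of the integrand. By Remark \ref{laplace_beltrami_psi}, the phase $\rho = \varphi + i\psi$ pulls back to $\widetilde\rho = z = t+i\theta$ on $\Psi_y(B)$, so $\partial_\eta \widetilde\rho = 0$; using the block-diagonal form of the metric from Lemma \ref{g_e_c_o_s}, the Euclidean pairing $(A_1-A_2) \cdot \nabla\rho$ reduces in the new coordinates to $e^{-2t}(A_t + iA_\theta)$, where $A_t$ and $A_\theta$ are the tangential components of $A \equiv \chi_\Omega(A_1-A_2)$ defined by \eqref{potential_coordinates_geodecis_1}.

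Second, I would insert the ansatz \eqref{a_circ_1}, producing
\[
\widetilde a_1\, \overline{\widetilde a_2}  = |g_{S^{n-2}}|^{-1/2}\, e^{-(n-2)t}\, (\sin\theta)^{-(n-2)}\, e^{\Phi}\, a_0,
\]
with $\Phi := \Phi_1 + \overline{\Phi}_2$. Combining this with the Jacobian $|g|^{1/2} = e^{nt}(\sin\theta)^{n-2}|g_{S^{n-2}}|^{1/2}$ from \eqref{det} and the factor $e^{-2t}$ coming from the pairing above, all powers of $e^t$, $\sin\theta$, and $|g_{S^{n-2}}|$ cancel exactly, leaving an integrand proportional to $(A_t+iA_\theta)\, e^{\Phi}\, a_0\,dt\,d\theta\,d\eta$. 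This clean cancellation is precisely the purpose of the rescaling built into \eqref{sfsfsdxb_f}, and is what makes the statement ``coordinate-invariant'' in the sense that no metric weights survive.

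Third, I would realize $\Psi_y(B)\subset (-T,T)\times(\epsilon,\pi-\epsilon)\times(y^\perp\cap S^{n-1})$ as an iterated integral: for each fixed $\eta\in y^\perp\cap S^{n-1}$, the inner slice $B\cap \Pi_{y,\eta}$ is parametrized by $z=t+i\theta$, and the planar Lebesgue measure $dt\,d\theta$ is identified with $dz\,d\overline z$ up to a harmless constant absorbed into $\lesssim$. Assembling these pieces yields the integral appearing in the corollary, and equation \eqref{phi_partia_uno} is simply \eqref{a_circ_2} restricted to the slice $\Pi_{y,\eta}$ for each fixed $\eta$, since $a_0$ is holomorphic in $z$ and the $\partial_{\overline z}$-equation for $\Phi_1+\overline\Phi_2$ only couples the $(t,\theta)$ variables. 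There is no genuine obstacle; the only bookkeeping point is verifying the exact cancellation of the geometric weights, after which the right-hand side bound from Proposition \ref{magnetic_estimate_final} transfers verbatim because $\|a_0\|_{H^1(\Psi_y(B))}$ is already expressed in $\Psi_y$-coordinates.
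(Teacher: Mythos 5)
Your proposal is correct and is precisely the argument the paper intends: it declares the corollary an immediate consequence of Proposition \ref{magnetic_estimate_final} via the $\Psi_y$-change of variables, Lemma \ref{g_e_c_o_s} and \eqref{a_circ_1}--\eqref{a_circ_2}, and your computation carries this out, with the weights $e^{nt}(\sin\theta)^{n-2}|g_{S^{n-2}}|^{1/2}$ from \eqref{det} cancelling exactly against those built into \eqref{sfsfsdxb_f}, the factor $e^{-2t}(A_t+iA_\theta)$ coming from Remark \ref{laplace_beltrami_psi}, and \eqref{phi_partia_uno} being just \eqref{a_circ_2} restricted to each slice $B\cap\Pi_{y,\eta}$. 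The only discrepancies ($e^{\Phi}$ versus the paper's $e^{i\Phi}$, and the constants from $D$ versus $\nabla$ and $dz\,d\overline{z}$ versus $dt\,d\theta$) are harmless normalizations absorbed into $\lesssim$ or into the definition of $\Phi$.
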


The next step consists in proving that estimate \eqref{rremov_exp_tez} still holds without the exponential term on the left-hand side. For convenience to the reader, we postpone a detailed proof of this fact to the Appendix, see Theorem \ref{remov_esti_phi_1}. In particular, one can consider the holomorphic function $e^{i\lambda z}=e^{i\lambda(t+i\theta)}$ with $\lambda>0$ in  place of $a_0$ in \eqref{rremov_exp_tez}. Note that $A\equiv \chi_\Omega(A_1-A_2)$ belongs to $C^{1+\sigma}_c(B; \mathbb{C}^n)$, we deduce
\begin{align*}
 \int_{B\cap \Pi_{y, \eta}} e^{i\lambda (t+i\theta)} (A_t+iA_\theta)(t, \theta, \eta) dt d\theta&= \int_0^{\tau(y, \eta)} e^{-\lambda \theta} H(\gamma_{y, \eta}(\theta), \dot{\gamma}_{y, \eta}(\theta))d\theta\\
 &= (T_\lambda H_\lambda)(y, \eta),
\end{align*}
where $(y, \eta)\in \partial_+ S(S^{n-1}_{>\beta^\prime})$ and the function $H_\lambda:=f_\lambda + \alpha_\lambda$ is defined on $S(S^{n-1}_{>\beta})$ by $H_\lambda(w, \xi)=f_\lambda(w)+ {\alpha_{j, \lambda}}(w)\xi^j$. The functions $f_\lambda$ and $(\alpha_{j, \lambda})_{j=1}^n$ are compactly supported on $S^{n-1}_{>\beta}$ and defined by
\begin{align*}
f_\lambda(w)&= \int_{\mathbb{R}} e^{i\lambda t} e^t \, w\cdot A(e^t\, w) dt:= \int_{\mathbb{R}} e^{i\lambda t} e^t \, w\cdot \widetilde{A}(t, w) dt \\
\alpha_{j, \lambda} (w)&= i \int_{\mathbb{R}} e^{i\lambda t} e^t \, A_j(e^t w) dt:=  i \int_{\mathbb{R}} e^{i\lambda t} e^t \, \widetilde{A}_j(t,w) dt, \quad j=1, 2, \ldots, n,
\end{align*}
where $\widetilde{A}(t, w):=A(e^t w)$ with the convention adopted in formulae \eqref{potential_coordinates_geodecis_1}. Combining the fact that $\partial_+ S (S^{n-1}_{>\beta^\prime}))$ has finite measure (see Remark \ref{rema_finite_measure}) with Lemma \ref{continuity:attenuated_ray_transform_1} and Theorem \ref{remov_esti_phi_1}, we deduce that there exist $C:=C(\lambda)>0$ and $t_0\in (0,1)$ such that
\begin{align*}
|\langle (T_\lambda^*\, T_\lambda) H_\lambda, h \rangle_{L^2( S(S^{n-1}_{>\beta^\prime}))}|&=|  \langle T_\lambda H_\lambda, T_\lambda h \rangle_{L^2(\partial_+ S(S^{n-1}_{>\beta^\prime}))}  |\\
&\leq \left\| T_\lambda H_\lambda \right\|_{{L^2( \partial_+ S(S^{n-1}_{>\beta^\prime}))}}\left\|T_\lambda h \right\|_{{L^2( \partial_+ S(S^{n-1}_{>\beta^\prime}))}}\\
& \lesssim C(\lambda) \left | \log \left \| \Lambda_1^\sharp- \Lambda_2^\sharp \right \| \right |^{-t_0/10}\left\| h \right\|_{{L^2( S(S^{n-1}_{>\beta^\prime}))}}
\end{align*}
for all  $h\in C^{\infty}_c(S(S^{n-1}_{>\beta^\prime}))$, which immediately implies
\[
\left\|(T_\lambda^*\, T_\lambda) H_\lambda \right\|_{{L^2( S(S^{n-1}_{>\beta^\prime}))}} \lesssim C(\lambda) \left | \log \left \| \Lambda_1^\sharp- \Lambda_2^\sharp \right \| \right |^{-t_0/10}.
\]


Now we shall show how the solenoidal structure emerges when computing the left-hand side of this estimate. We will make a Hodge decomposition to  $\alpha_\lambda$ in $S^{n-1}_{>\beta}$. Since $\delta_{S^{n-1}}\alpha_\lambda \in H^{-1}(S^{n-1}_{>\beta})$, we deduce that there exists $\ss\in H^{1}_0(S^{n-1}_{>\beta})$ solving the equation
\begin{align*}
     \begin{cases}
           \Delta_{S^{n-1}} \ss= \delta_{S^{n-1}} \alpha_\lambda,  &  \mbox{in}\,\, S^{n-1}_{>\beta} \\
           \ss=0, & \mbox{on} \, \, \partial S^{n-1}_{>\beta}.
     \end{cases}
\end{align*}
The existence (and uniqueness) of such $ \ss$ is given by \cite[Lemma 1]{Ste}. We shall write $\alpha_\lambda= \alpha_\lambda^s + d \ss$, where $\alpha_\lambda^s=\alpha_\lambda- d \ss$ and $\delta_{S^{n-1}}\alpha_\lambda^s=0$. From the vanishing condition of $\ss$ on $\partial S^{n-1}_{>\beta}$ and the compact support property of $\alpha_{j, \lambda}$, $f_\lambda$ ($j=1,2, \ldots,n$), all of them can be extended by zero on $S^{n-1}_{>\beta^\prime}\setminus S^{n-1}_{>\beta}$. We still denote these extensions by the same letter. In particular $(f_\lambda + \lambda  \ss, \alpha_\lambda -d \ss)$ is a solenoidal pair, see Definition \ref{deft_solenoidal_partz}, and since $T_\lambda (\lambda  \ss + d \ss)=0$, we obtain
\[
T_{\lambda}^*\,T_{\lambda} (H_\lambda)= T_{\lambda}^*\,T_{\lambda}\left( (f_\lambda + \lambda  \ss) + (\alpha_\lambda - d \ss)\right).
\]
By Theorem \ref{stability_estimates_st_ra_tr}, there exists $\lambda_0$ such that for all $0\leq \lambda\leq \lambda_0$, we get
\[
\left\|f_\lambda + \lambda  \ss \right\|_{H^{-1}(S^{n-1}_{>\beta})} + \left\|\alpha_{\lambda} - d \ss \right\|_{H^{-1}(\Lambda^1S^{n-1}_{>\beta})} \lesssim C(\lambda) \left | \log \left \| \Lambda_1^\sharp- \Lambda_2^\sharp \right \| \right |^{-t_0/10}.
\]
A standard interpolation between the spaces $H^{-2}(\Lambda^1 S^{n-1}_{>\beta})$ and $L^{2}(\Lambda^1 S^{n-1}_{>\beta})$, and $H^{-2}(\Lambda^2 S^{n-1}_{>\beta})$ and $L^{2}(\Lambda^2 S^{n-1}_{>\beta})$, yields
\begin{equation}\label{dif_equatim}
\underset{|\lambda|\leq \lambda_0}{\sup} \left( \left\|df_\lambda + \lambda \alpha_\lambda \right\|_{H^{-1}(\Lambda^1 S^{n-1}_{>\beta})} + \left\| d\alpha_{\lambda} \right\|_{H^{-1}(\Lambda^2S^{n-1}_{>\beta})} \right) \lesssim \left | \log \left \| \Lambda_1^\sharp- \Lambda_2^\sharp \right \| \right |^{-t_0/5}.
\end{equation}

Let us compute the terms related to the $H^{-1}$-norms. 

\begin{align*}
df_\lambda + \lambda \alpha_\lambda& =\left( \int_{\mathbb{R}} e^{i\lambda t} \upsilon_k(t,w) dt  \right) dw_k= \left(  \widehat{\upsilon_k(\cdot, w)}(-\lambda)\right)dw_k,\\
 \upsilon_k(t,w) &= e^t\left( w_j \partial_{w_k} \widetilde{A}_j(t,w)- \partial_t \widetilde{A}_k(t,w)\right),  \quad k=1,2 \ldots, n. \\
d\alpha_\lambda &= \left(  \int_{\mathbb{R}} e^{i\lambda t} \varrho_{jk}(t,w)dt \right)dw_j dw_k= \left(  \widehat{ \varrho_{jk}(\cdot, w)}(-\lambda)\right)dw_j dw_k,\\
 \varrho_{jk}(t,w)& =  e^t\left( \partial_{w_k}\widetilde{A}_j(t, w) - \partial_{w_j}\widetilde{A}_k(t,w) \right),  \quad j, k=1,2 \ldots, n.
\end{align*}

The derivation under the integral in above identities is permitted by Fubini's theorem and since $A\in C^{1+\sigma}_c(B; \mathbb{C}^n)$. In addition, $\upsilon_k$ and $\varrho_{jk}$ belong to $L^1(\mathbb{R}; H^\sigma(S^{n-1}))$ for all $ j, k=1,2 \ldots, n$. An application of Riemann-Lebesgue theorem yields
\[
\left\|\widehat{ \upsilon_k(\cdot, \cdot)}(-\lambda)\right\|_{H^\sigma(S^{n-1})}+ \left\|\widehat{  \varrho_{jk}(\cdot, \cdot)}(-\lambda)\right\|_{H^\sigma(S^{n-1})} \lesssim 1,\quad  \lambda >0,
\] 
 where the implicit constant is uniform in $\lambda$. An interpolation between the spaces $H^{-1}(S^{n-1}_{>\beta})$ and $H^\sigma(S^{n-1}_{>\beta})$ combined with  \eqref{dif_equatim} yield us
\begin{align*}
\underset{|\lambda|\leq \lambda_0}{\sup} \left( \left\| \widehat{\upsilon_k(\cdot, \cdot)}(-\lambda) \right\|_{L^{2}(S^{n-1}_{>\beta})} +  \left\| \widehat{\varrho_{jk}(\cdot, \cdot)}(-\lambda) \right\|_{L^{2}(S^{n-1}_{>\beta})}\right)\\
\lesssim  \left | \log \left \| \Lambda_1^\sharp- \Lambda_2^\sharp \right \| \right |^{-\frac{t_0}{5(1+\sigma)}}.
\end{align*}
A direct application of Lemma \ref{lemma_estension_caro_ruiz_DSF} with $H=L^2(S^{n-1}_{>\beta})$ and first with $f= \upsilon_k$ and later with $f=\varrho_{jk}$ allows us to deduce 
\begin{equation}\label{last_hopefully}
\begin{aligned}
&\left\|\upsilon_k \right\|_{L^2(\mathbb{R}; L^2(S^{n-1}_{>\beta}))} + \left\|\varrho_{jk} \right\|_{L^2(\mathbb{R}; L^2(S^{n-1}_{>\beta}))} \\
&\qquad \qquad  \lesssim  \left |  \log \left | \log  \left \|\Lambda^{\sharp}_{1}- \Lambda^{\sharp}_{2}    \right \| \right |  \right | ^{-\frac{\sigma}{3(\sigma+1)}},
\end{aligned}
\end{equation}
for each $j,k=1, 2, \ldots,n$. The left-hand side of this estimate is related to $dA$ with $A=\chi_\Omega(A_1-A_2)$. Indeed, by using the chain rule combined with \eqref{det} with \eqref{c_underline}, we get 
\begin{align*}
\left\| dA_1-dA_2 \right\|_{L^2(\Omega)}^2&= \sum_{j,k=1}^n\int_{\mathbb{R}^n} | \partial_j A_k - \partial_k A_j |^2dx \\
&\leq \underline{c}^{n-2} \sum_{j,k=1}^n\int_{\mathbb{R}^n}|x-x_0|^{-n+2} | \partial_j A_k - \partial_k A_j |^2dx \\
& = \underline{c}^{n-2}  \sum_{j,k=1}^n \int_{\mathbb{R}} \int_{S^{n-1}_{>\beta}} |e^t (\partial_j A_k - \partial_k A_j)(x_0+ e^t w) |^2 dw\, dt \\
& \lesssim  \sum_{k=1}^n\left\|\upsilon_k \right\|_{L^2(\mathbb{R}; L^2(S^{n-1}_{>\beta}))}^2 +  \sum_{j,k=1}^n\left\|\varrho_{jk} \right\|_{L^2(\mathbb{R}; L^2(S^{n-1}_{>\beta}))}^2.
\end{align*} 
This estimate, combined with \eqref{last_hopefully}, ends the proof of Theorem \ref{SMP}.

\section{Stability estimate for the electric potential}\label{sta_electri_pot}

The goal of this section is to prove Theorem \ref{SEP}. We take advantage of the DN maps' invariance employing an appropriate Hodge decomposition to $A_1-A_2$. The following result has not been explicitly stated in \cite{Tz}. However, it can be easily deduced combining \cite[Lemma 6.2]{Tz} with the discussion just after its proof, see also \cite[estimate (23)]{Tz}.
\begin{lem}\label{hd}
Let $p>n$. There exist $\omega\in W^{3,p}(\Omega)\cap H_{0}^1(\Omega)$ and a positive constant $C$ such that
\[
\left \| A_1-A_2-\nabla \omega \right \|_{W^{1,p}(\Omega)} \leq C \left \|d(A_1-A_2)  \right \|_{L^p(\Omega)}
\]
and
\[
\left \| \omega \right \|_{W^{3,p}(\Omega)} \leq C \left \| A_1-A_2 \right \|_{W^{2,p}(\Omega)}.
\]
Here
\[
\left\| dA \right\|_{L^p(\Omega)}:= \sum_{1\leq j<k\leq n}\left\| \partial_j A_k - \partial_k A_j \right\|_{L^p(\Omega)}.
\]
\end{lem}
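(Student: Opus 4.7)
The plan is to construct $\omega$ as the solution of a Dirichlet problem for the Laplacian designed so that the residual $F:=A_1-A_2-\nabla\omega$ is divergence-free with a controllable tangential boundary trace, and then invoke an $L^{p}$ Gaffney-type inequality to bound $F$ in $W^{1,p}(\Omega)$ by the exterior derivative $d(A_1-A_2)$.

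Concretely, write $V:=A_1-A_2\in W^{2,p}(\Omega;\mathbb C^{n})$ and let $\omega$ be the unique solution in $H^{1}_{0}(\Omega)$ of
\[
\Delta\omega=\operatorname{div} V \ \ \text{in }\Omega,\qquad \omega=0 \ \ \text{on }\partial\Omega.
\]
Since $\partial\Omega$ is smooth and $\operatorname{div} V\in W^{1,p}(\Omega)$, the standard $L^{p}$ theory for the Dirichlet Laplacian gives $\omega\in W^{3,p}(\Omega)\cap H^{1}_{0}(\Omega)$ together with
\[
\|\omega\|_{W^{3,p}(\Omega)}\leq C\,\|\operatorname{div} V\|_{W^{1,p}(\Omega)}\leq C\,\|V\|_{W^{2,p}(\Omega)},
\]
which is the second stated bound.

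Next set $F:=V-\nabla\omega$. By construction $\operatorname{div} F=\operatorname{div} V-\Delta\omega=0$ and, since $d\nabla\omega=0$ as a $2$-form, $dF=dV$. Using the hypothesis $A_{1}=A_{2}$ on $\partial\Omega$ (hence $V|_{\partial\Omega}=0$) together with $\omega|_{\partial\Omega}=0$ (so all tangential derivatives of $\omega$ vanish on $\partial\Omega$ and $\nabla\omega$ is purely normal there), we conclude that the tangential trace of $F$ on $\partial\Omega$ vanishes. Thus $F$ solves the Hodge system
\[
dF=dV,\qquad \delta F=0 \ \ \text{in }\Omega,\qquad \iota^{*}F=0 \ \ \text{on }\partial\Omega,
\]
where $\iota^{*}$ denotes the pullback of forms to $\partial\Omega$. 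The $L^{p}$ Gaffney inequality for $1$-forms with vanishing tangential trace on a smooth bounded domain then yields
\[
\|F\|_{W^{1,p}(\Omega)}\leq C\bigl(\|dF\|_{L^{p}(\Omega)}+\|\delta F\|_{L^{p}(\Omega)}\bigr)=C\,\|dV\|_{L^{p}(\Omega)},
\]
which is the first stated bound.

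The main technical point is the $L^{p}$ Gaffney inequality with the relative boundary condition $\iota^{*}F=0$. Its $L^{2}$ version is classical Hodge theory on manifolds with boundary; the extension to general $p\in(1,\infty)$ rests on Calderón–Zygmund estimates for the Hodge projector on a smooth bounded domain, which are standard and well documented in the literature cited in \cite{Tz}. The hypothesis $p>n$ is used to guarantee the Sobolev embeddings $W^{1,p}\hookrightarrow C^{0,\alpha}$ and $W^{3,p}\hookrightarrow C^{2,\alpha}$ that will later be needed when this decomposition is fed back into the estimates of the previous sections.
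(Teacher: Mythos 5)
Your construction is essentially the argument that the paper itself outsources: the paper offers no proof of Lemma \ref{hd} beyond citing \cite[Lemma 6.2]{Tz} and the discussion following it, and the decomposition there is obtained the same way you do it --- solve $\Delta\omega=\operatorname{div}(A_1-A_2)$ with zero Dirichlet data, get the $W^{3,p}$ bound from $L^p$ elliptic regularity, and control the co-closed remainder $F=A_1-A_2-\nabla\omega$, which has vanishing tangential trace, by a div--curl (Gaffney-type) estimate. Your explicit use of the standing hypothesis $A_1=A_2$ on $\partial\Omega$ is not a defect but a necessity: with no boundary condition on $A_1-A_2$ the statement is false (take $A_1-A_2=\nabla\phi$ with $\phi$ nonconstant on $\partial\Omega$, so that $d(A_1-A_2)=0$ yet $A_1-A_2\neq\nabla\omega$ for any $\omega\in H^1_0(\Omega)$), so it is right to invoke it even though the lemma does not restate it.

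The one step you should not wave away is the inequality $\|F\|_{W^{1,p}(\Omega)}\leq C\bigl(\|dF\|_{L^p(\Omega)}+\|\delta F\|_{L^p(\Omega)}\bigr)$ for $1$-forms with $\iota^*F=0$. The standard Gaffney/Friedrichs estimate carries the additional term $\|F\|_{L^p(\Omega)}$ on the right, and it cannot simply be dropped on an arbitrary smooth bounded domain: any nontrivial Dirichlet harmonic field ($dF=0$, $\delta F=0$, $\iota^*F=0$) makes your right-hand side vanish while the left does not, and such fields exist exactly when $\mathbb{R}^n\setminus\overline{\Omega}$ is disconnected. Indeed, on a spherical shell the lemma itself fails: a compactly supported gradient $\nabla\phi$ with $\phi\equiv 0$ near the inner sphere and $\phi\equiv 1$ near the outer sphere satisfies $d(A_1-A_2)=0$ and $A_1=A_2$, $\partial_\nu A_1=\partial_\nu A_2$ on $\partial\Omega$, yet is not $\nabla\omega$ for any $\omega\in H^1_0(\Omega)$; note that such a shell is simply connected for $n\geq 3$, so the simple connectivity advertised in the paper is not the relevant hypothesis (it controls the Neumann, not the Dirichlet, cohomology). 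To close your argument you need (i) the Gaffney estimate with the lower-order term $\|F\|_{L^p}$, and (ii) a uniqueness-plus-compactness argument to absorb that term, which requires the absence of Dirichlet harmonic $1$-fields, i.e.\ connectedness of $\partial\Omega$ (equivalently of $\mathbb{R}^n\setminus\overline{\Omega}$) --- an assumption implicit in the paper and in \cite{Tz}. With that point made precise, the remaining steps (the $W^{3,p}$ elliptic estimate, $\delta F=0$, $dF=d(A_1-A_2)$, and the tangential trace computation, which is classical since $p>n$ gives $\omega\in C^2(\overline{\Omega})$) are correct.
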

A direct application of Morrey's inequality yields
\begin{equation}\label{ggg1}
\left \| A_1-A_2-\nabla \omega \right \|_{C^{0, 1-\frac{n}{p}} (\overline{\Omega})} \leq C \left  \|d(A_1-A_2)  \right \|_{L^p(\Omega)}
\end{equation}
and
\begin{equation}\label{ggg2}
\left \|\partial^\alpha \omega \right \|_{L^{\infty}(\Omega)}  \leq C \left \| A_1-A_2 \right \|_{W^{2,p}(\Omega)}, \quad |\alpha|\leq 2.
\end{equation}
\begin{lem}\label{refzaqw2}
 Consider $\omega$ as in Lemma \ref{hd}. Let $j=1,2$. Define $\widetilde{A}_j:= A_j +(-1)^j\nabla \omega / 2$. Assume $u_j:=U_j|_{\Omega}\in H^{1}(\Omega)$ being the solution of $\mathcal{L}_{A_j, q_j}u_j=0$, where $U_j$ is given by \eqref{CgO_SoL:1}. Then $\widetilde{U}_j:= e^{(-1)^ji\omega/2}u_j\in H^1(\Omega)$ is a solution of $\mathcal{L}_{\widetilde{A}_j, q_j}\widetilde{U}_j=0$. Moreover, we have the DN map identity $\Lambda_{A_j,q_j}= \Lambda_{\widetilde{A}_j,q_j}$.
\end{lem}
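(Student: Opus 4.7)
The proof is essentially the classical gauge invariance of the magnetic Schrödinger operator applied to the specific gauge function $\varphi_j = (-1)^j\omega/2$, which vanishes on $\partial\Omega$ because $\omega\in H_0^1(\Omega)$.

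The plan is to first establish the intertwining identity
\[
\mathcal{L}_{A+\nabla\varphi,\,q}\bigl(e^{-i\varphi}u\bigr)=e^{-i\varphi}\,\mathcal{L}_{A,q}\,u
\]
valid for every real smooth $\varphi$. This follows from the one-line gauge covariance of the magnetic gradient, namely $(D_j+A_j)(e^{i\varphi}v)=e^{i\varphi}(D_j+A_j+\partial_j\varphi)v$, applied twice to each component of $(D+A)^{2}$, together with $\mathcal{L}_{A,q}=(D+A)^{2}+q$. Choosing $\varphi=(-1)^{j}\omega/2$ (respectively $-\varphi$), inserting $u=u_j$ and $v=\widetilde U_j$, and using $\widetilde A_j=A_j+\nabla\varphi_j$, one gets directly $\mathcal{L}_{\widetilde A_j,q_j}\widetilde U_j=e^{\mp i\omega/2}\mathcal{L}_{A_j,q_j}u_j=0$. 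The regularity $\omega\in W^{3,p}(\Omega)$ with $p>n$ guarantees via Morrey's embedding that $\omega\in C^{2}(\overline\Omega)$, so every differentiation above is classically justified; the factor $e^{(-1)^{j}i\omega/2}$ is a bounded multiplier on $H^{1}(\Omega)$, so $\widetilde U_j\in H^1(\Omega)$.

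For the DN-map identity $\Lambda_{A_j,q_j}=\Lambda_{\widetilde A_j,q_j}$ I would argue as follows. The trace $\omega|_{\partial\Omega}=0$ implies $e^{(-1)^{j}i\omega/2}|_{\partial\Omega}\equiv 1$, so $\widetilde U_j$ and $u_j$ share the same Dirichlet data on $\partial\Omega$; this verifies that the Dirichlet problem for $\mathcal{L}_{\widetilde A_j,q_j}$ is well posed (with the same spectral assumption inherited from the gauge equivalence) and that $\widetilde U_j$ is the unique $H^1$-solution with boundary trace equal to $u_j|_{\partial\Omega}$. For the Neumann-type data, a direct product-rule computation yields
\[
(\nabla+i\widetilde A_j)\widetilde U_j=e^{(-1)^{j+1}i\omega/2}(\nabla+iA_j)u_j,
\]
because the derivative of the exponential factor cancels exactly against the gauge term $(-1)^{j}\nabla\omega/2$ that was added to $A_j$. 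Contracting with $\nu$ on $\partial\Omega$ and using once more that the exponential factor is $1$ there completes the identification of the partial DN maps $\Lambda_{A_j,q_j}=\Lambda_{\widetilde A_j,q_j}$.

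I do not foresee any genuine obstacle: this is a routine gauge-covariance argument and reduces entirely to the chain rule. The only subtlety worth flagging is to keep track of the signs in the two copies $j=1,2$ and to notice that it is $\omega|_{\partial\Omega}=0$ (rather than $\partial_\nu\omega=0$) that powers the boundary identity, with the extra normal-derivative contribution from differentiating the exponential being precisely killed by the gauge term in $\widetilde A_j$. This is what makes the decomposition supplied by Lemma \ref{hd} so convenient: it produces a genuinely admissible gauge $\omega$ (vanishing on $\partial\Omega$) so that the DN map is preserved, while replacing $A_1-A_2$ by the better-behaved $A_1-A_2-\nabla\omega$, whose $C^{0,1-n/p}$ norm is controlled by $\|d(A_1-A_2)\|_{L^p}$ as needed for the subsequent stability argument for $q_1-q_2$.
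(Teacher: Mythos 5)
Your proof is correct and follows essentially the same route as the paper, which simply quotes the conjugation identities $e^{\pm i\omega/2}\mathcal{L}_{A_j,q_j}e^{\mp i\omega/2}=\mathcal{L}_{\widetilde{A}_j,q_j}$ from \cite[Lemma 3.1]{KU} and then invokes $\omega|_{\partial\Omega}=0$ --- exactly the two steps you carry out by hand via the gauge covariance of $D+A$ and the trace/Neumann computation. One bookkeeping remark: your intertwining identity sends $u_j$ to $e^{-(-1)^j i\omega/2}u_j$ (which is also what your displayed Neumann identity requires), so what you actually establish is the statement with that exponent, the sign convention consistent with $\widetilde{A}_j=A_j+(-1)^j\nabla\omega/2$ and with the paper's later use of $e^{i\omega}$; the exponent printed in the lemma carries the opposite sign, a harmless slip in the statement itself rather than a gap in your argument.
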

\begin{proof} This result is an immediate consequence of the following identities
\[
e^{i\omega/2}\mathcal{L}_{A_1,q_1}e^{-i\omega/2}= \mathcal{L}_{\widetilde{A}_1, q_1} \; , \; e^{-i\omega/2}\mathcal{L}_{A_2,q_2}e^{i\omega/2}= \mathcal{L}_{\widetilde{A}_2, q_2},
\]
\[
e^{i\omega/2}\Lambda_{A_1,q_1}e^{-i\omega/2}= \Lambda_{\widetilde{A}_1, q_1} \; , \; e^{-i\omega/2}\Lambda_{A_2,q_2}e^{i\omega/2}= \Lambda_{\widetilde{A}_2, q_2}.
\]
which can be found, for instance, in \cite[Lemma 3.1]{KU}. Since $\omega|_{\partial\Omega}=0$, we easily deduce $\Lambda_{A_j,q_j}= \Lambda_{\widetilde{A}_j,q_j}$.
\end{proof}

By \eqref{CgO_SoL:1}, for a fixed $y\in \partial S^{n-1}_{>\beta^\prime}$ and for $\tau>0$ large enough, we have 
\[
\begin{aligned}
\widetilde{U}_1&= e^{-i\omega/2}e^{\tau(\varphi+i\psi)}(a_1+r_1),\\
\widetilde{U}_2&= e^{i\omega/2}e^{\tau(-\varphi+i\psi)}(a_2+r_2),
\end{aligned}
\]
with $\varphi$ and $\psi$ are defined by \eqref{eikonal_solution}, and $a_j$, $r_j$ satisfy \eqref{eq:trans_original_coordinates}-\eqref{first_dos}. Note that in $\Psi_y$-coordinates, $a_j$ has the form given by \eqref{a_circ_1} with $a_0$ being any holomorphic function in $z:=t+i\theta$, that is, $\partial_{\overline{z}}\,a_0=0$. Taking into account the notation from Lemma \ref{refzaqw2}, we deduce
\begin{equation}\label{algeea}
\begin{aligned}
& \left \langle  (\Lambda_1 - \Lambda_2)\widetilde{U}_1,\widetilde{U}_2 \right \rangle_{L^2(\partial\Omega)}=\int_{\Omega} \left[ (\widetilde{A}_1-\widetilde{A}_2)\cdot(D\widetilde{U}_1 \overline{\widetilde{U}}_2 + U_1 \overline{D\widetilde{U}}_2) \right. \\
&\qquad \qquad \qquad \qquad \qquad \qquad \qquad   \left. + (\widetilde{A}_1^2-\widetilde{A}_2^2+q_1-q_2)\widetilde{U}_1\overline{\widetilde{U}}_2\right]dx.
 \end{aligned}
\end{equation}
Our next task will be to get information of $q_1-q_2$ from this identity.  We expect to obtain a suitable attenuated geodesic ray transform of $q_1-q_2$, by following similar computations made in proving Theorem \ref{SMP}. In this way, one gets proper bounds in $L^2(\Omega)$ of $ (\widetilde{A}_1-\widetilde{A}_2)\cdot(D\widetilde{U}_1 \overline{\widetilde{U}}_2 + \widetilde{U}_1 \overline{D\widetilde{U}}_2)$ and $ (\widetilde{A}_1^2-\widetilde{A}_2^2)\widetilde{U}_1\overline{\widetilde{U}}_2$. Both terms have the common factor $\widetilde{A}_1-\widetilde{A}_2= A_1-A_2-\nabla \omega$, which can be related with our previous estimate for the magnetic part. Picking any $p>n$, we claim
\begin{equation}\label{hhh5}
\left \| A_1-A_2-\nabla \omega \right \|_{C^{0, 1-\frac{n}{p}}(\overline{\Omega})}  \lesssim \left | \log \left | \log  \left \|\Lambda^{\sharp}_{1}- \Lambda^{\sharp}_{2}    \right \| \right |   \right |^{-\kappa/(2p)}.
\end{equation}
Indeed, by interpolation ($2<p<2(p-1)$) we get
\[
\left \| d(A_1-A_2) \right \|_{L^p(\Omega)}\lesssim \left \| d(A_1-A_2) \right \|_{L^2(\Omega)}^{1/p} \left \| d(A_1-A_2) \right \|_{L^{2(p-1)}(\Omega)}^{1-1/p}.
\]
Consequently, the claimed estimate follows from Theorem \ref{SMP} and \eqref{ggg1}. Next, from identity \eqref{algeea} and taking into account Remark \ref{form_a_j} and the form of $a_j$ in $\Psi_y$-coordinates given by \eqref{a_circ_1}, we deduce
\begin{multline*}
\left| \int_{\Omega}(q_1-q_2) e^{ \left(\Phi_1 +\overline{\Phi}_2 \right) \circ \Psi_y+i\omega} a_0\circ \Psi_y \, dx  \right| \\
\leq \left(  \tau \underline{c}^{3\tau}\left \| \Lambda_1^{\sharp}-  \Lambda_2^{\sharp} \right \| + \tau^{-1/2} + \tau \left\| A_1-A_2-\nabla \omega \right\|_{L^\infty(\Omega)} \right) \left\| a_0\right\|_{H^1(\Psi_y(B))},
\end{multline*}
where we have used \eqref{sfsfsdxb_f1}, \eqref{first_uno}, \eqref{first_dos}, and Lemma \ref{reiamnir}. Here $\underline{c}>1$ is defined in \eqref{c_underline}. We now remove the exponential term $e^{\Phi_1+\overline{\Phi}_2+i\omega}$. This part is more manageable than the magnetic case, and it will follow by the identity
\begin{align*}
 \int_{\Omega}(q_1-q_2) a_0\circ \Psi_y\, dx =&   \int_{\Omega}( 1-e^{ \left(\Phi_1 +\overline{\Phi}_2 \right) \circ \Psi_y+i\omega} )(q_1-q_2)a_0\circ \Psi_y \, dx\\
&  +   \int_{\Omega} (q_1-q_2) e^{ \left(\Phi_1 +\overline{\Phi}_2 \right) \circ \Psi_y+i\omega}a_0\circ \Psi_y\, dx.
\end{align*}
By Remark \ref{previous_coordinate}, we deduce
\begin{multline*}
 \left | \int_{\Omega}( 1-e^{ \left(\Phi_1 +\overline{\Phi}_2 \right) \circ \Psi_y+i\omega} )(q_1-q_2) a_0\circ\Psi_y dx  \right | \\
 \lesssim   \left \| \left(\Phi_1 +\overline{\Phi}_2 \right) \circ \Psi_y+i\omega\right \|_{L^\infty(\Omega)}\left\|   (q_1-q_2) a_0\circ\Psi_y  \right\|_{L^1(\Omega)} \\
\lesssim  \left \| A_1-A_2-\nabla\omega \right \|_{L^\infty(\Omega)} \left \| a_0\right \|_{L^2(\Psi_y(B))},
\end{multline*}
where we have used the inequality
\[
\left | e^{a}-e^{b} \right |\leq \left | a-b \right | e^{\max \left \{ \Re\, a, \Re\, b \right \}} \; \; , \; \; a,b \in \mathbb{C}.
\]
Hence we obtain 
\begin{multline*}
\left| \int_{\Omega}(q_1-q_2) a_0\circ \Psi_y \, dx  \right| \\
\leq \left(  \tau \underline{c}^{3\tau}\left \| \Lambda_1^{\sharp}-  \Lambda_2^{\sharp} \right \| + \tau^{-1/2} + \tau \left\| A_1-A_2-\nabla \omega \right\|_{L^\infty(\Omega)} \right) \left\| a_0\right\|_{H^1(\Psi_y(B))}.
\end{multline*}
By \eqref{hhh5} and choosing $\tau\geq \tau_0>0$ as
\[
\tau:=   \dfrac{1}{8} |\log\underline{c}|^{-1}  \left |  \log \left | \log \left \| \Lambda_1^\sharp- \Lambda_2^\sharp \right \| \right |\right |^{\kappa/(3p)}\geq \tau_0
\]
with $\tau_0$ satisfying
\[
\left \| \Lambda_1^{\sharp}-  \Lambda_2^{\sharp} \right \| \leq  e^{-e^{ \left( 8\tau_0\log   \underline{c}\right)^{3p \kappa^{-1}}}},
\]
we get
\[
\left| \int_{\Omega}(q_1-q_2) a_0\circ \Psi_y \, dx  \right|\lesssim \left | \log \left | \log  \left \|\Lambda^{\sharp}_{1}- \Lambda^{\sharp}_{2}    \right \| \right |   \right |^{-\kappa/(3p)}  \left\| a_0\right\|_{H^1(\Psi_y(B))}.
\]

We can now proceed analogously to the magnetic case in obtaining suitable information of $q_1-q_2$ through an appropriate attenuated geodesic ray transform. We first set $Q:= \chi_{\Omega}(q_1-q_2)$ and choosing $a_0=e^{i\lambda z}b$ in the previous estimate with $b$ being any smooth function on $y^\perp \cap S^{n-1}$, we deduce
\begin{equation}\label{int_alosmt_fin}
\begin{aligned}
\underset{y\in \partial S^{n-1}_{>\beta^\prime}}{\sup}\left\| \int_{B\cap \Pi_{y, \eta}} e^{i\lambda z}Q\circ \Psi_y^{-1}\, dzd\overline{z} \right\|_{H^{-1}(y^\perp\cap S^{n-1})} \\
 \lesssim \left | \log \left | \log  \left \|\Lambda^{\sharp}_{1}- \Lambda^{\sharp}_{2}    \right \| \right |   \right |^{-\kappa/(3p)}.
\end{aligned}
\end{equation}
Since $Q$ is compactly supported and recalling that $z=t+i\theta$, we have
\begin{multline*}
\int_{B\cap \Pi_{y, \eta}} e^{i\lambda z}Q\circ \Psi_y^{-1}\, dzd\overline{z} \\
= \int_0^\pi e^{-\lambda\theta}\left[ \int_{-\infty}^\infty e^{i\lambda t} e^t Q(e^t ((\cos \theta) y + (\sin \theta) \eta)) dt\right]d\theta\\
:= \int_0^\pi e^{-\lambda\theta} \mathcal{Q}_\lambda (\gamma_{y, \eta}(\theta))d\theta= (T_\lambda \mathcal{Q}_\lambda)(y, \eta),\qquad  \qquad  
\end{multline*}
where $\mathcal{Q}_\lambda$ is defined on $S^{n-1}_{>\beta^\prime}$ by
\begin{equation}\label{new_not}
\mathcal{Q}_\lambda (w)=   \int_{-\infty}^\infty e^{i\lambda t} e^t \widetilde{Q}(t, w) dt= \widehat{\widetilde{Q}(\cdot, w)}(-\lambda), \quad  \widetilde{Q}(t, w) = Q(e^t w).
\end{equation}
 In light of Lemma \ref{g_e_c_o_s}, $\mathcal{Q}_\lambda$ is compactly supported on $S^{n-1}_{>\beta}$. Furthermore, by interpolation and \eqref{int_alosmt_fin}, we have
\begin{multline}\label{last_iloy}
\left\| T_\lambda \mathcal{Q}_\lambda  \right\|_{L^2(\partial_+ S(S^{n-1}_{>\beta^\prime}) )}= \left\| T_\lambda \mathcal{Q}_\lambda  \right\|_{L^2(\partial S^{n-1}_{>\beta^\prime}; L^{2}(y^\perp\cap S^{n-1}))}\\
\lesssim \left\| T_\lambda \mathcal{Q}_\lambda  \right\|_{L^2(\partial S^{n-1}_{>\beta^\prime}; H^{-1}(y^\perp\cap S^{n-1}))}^{\frac{\sigma}{\sigma+1}} \left\| T_\lambda \mathcal{Q}_\lambda  \right\|_{L^2(\partial S^{n-1}_{>\beta^\prime}; H^{\sigma}(y^\perp\cap S^{n-1}))}^{\frac{1}{\sigma+1}}\\
\lesssim \left\| T_\lambda \mathcal{Q}_\lambda  \right\|_{L^\infty(\partial S^{n-1}_{>\beta^\prime}; H^{-1}(y^\perp\cap S^{n-1}))}^{\frac{\sigma}{\sigma+1}} \left\| T_\lambda \mathcal{Q}_\lambda  \right\|_{L^2(\partial S^{n-1}_{>\beta^\prime}; H^{\sigma}(y^\perp\cap S^{n-1}))}^{\frac{1}{\sigma+1}}\\
 \lesssim \left | \log \left | \log  \left \|\Lambda^{\sharp}_{1}- \Lambda^{\sharp}_{2}    \right \| \right |   \right |^{-\frac{\kappa\sigma}{3p(\sigma+1)}} \left\| T_\lambda \mathcal{Q}_\lambda  \right\|_{L^2(\partial S^{n-1}_{>\beta^\prime}; H^{\sigma}(y^\perp\cap S^{n-1}))}^{\frac{1}{\sigma+1}}.
\end{multline}
It remains to bound the last term on the right. Since $Q\in H^\sigma(\mathbb{R}^n)$, it certainly belongs to $L^1(\mathbb{R}; H^\sigma(S^{n-1}))$, and by Riemann-Lebesgue theorem
\[
\underset{\lambda>0}{\sup}\left\| \mathcal{Q}_\lambda\right\|_{H^\sigma(S^{n-1})} < +\infty.
\]
By Lemma \ref{continuity:attenuated_ray_transform_1}, we have the bound
\[
 \left\| T_\lambda \mathcal{Q}_\lambda  \right\|_{L^2(\partial S^{n-1}_{>\beta^\prime}; H^{\sigma}(y^\perp\cap S^{n-1}))} \lesssim  \left\| \mathcal{Q}_\lambda\right\|_{H^\sigma(S^{n-1})}.
\]
Hence, \eqref{last_iloy} and Theorem \ref{stability_estimates_dossantoscaroruiz129} ensures that 
\[
\underset{|\lambda|\leq \lambda_0}{\sup} \left\| \mathcal{Q}_\lambda \right\|_{H^{-1/2}(S^{n-1}_{>\beta})} \lesssim \left | \log \left | \log  \left \|\Lambda^{\sharp}_{1}- \Lambda^{\sharp}_{2}    \right \| \right |   \right |^{-\frac{\kappa\sigma}{3p(\sigma+1)}},
\]
for $\lambda_0>0$ small enough. One again, an interpolation between the spaces $H^{-1/2}(S^{n-1}_{>\beta})$ and $H^\sigma(S^{n-1}_{>\beta})$ with  \eqref{new_not} yield 
\[
\underset{|\lambda|\leq \lambda_0}{\sup} \left\| \widehat{\widetilde{Q}(\cdot, \cdot)}(-\lambda) \right\|_{L^{2}(S^{n-1}_{>\beta})} \lesssim \left | \log \left | \log  \left \|\Lambda^{\sharp}_{1}- \Lambda^{\sharp}_{2}    \right \| \right |   \right |^{-\frac{2\kappa\sigma^2}{3p(\sigma+1)(2\sigma+1)}}.
\]
A direct application of Lemma \ref{lemma_estension_caro_ruiz_DSF} with $H=L^2(S^{n-1}_{>\beta^\prime})$ and $f=\widetilde{Q}$ shows
\[
\left\|\widetilde{Q} \right\|_{L^2(\mathbb{R}; L^2(S^{n-1}_{>\beta}))} \lesssim  \left | \log \left |  \log \left | \log  \left \|\Lambda^{\sharp}_{1}- \Lambda^{\sharp}_{2}    \right \| \right |  \right | \right |^{-\frac{\sigma}{3(\sigma+1)}}. 
\]
We end the proof by relating the left-hand side of this estimate with $q_1-q_2$. It can be easily seemed by combining \eqref{c_underline} with the following estimate 
\begin{align*}
||q_1-q_2||^2_{L^2(\Omega)}&=\int_{\Omega} |q_1(x) - q_2(x) |^2dx\\
&  \leq \underline{c}^{n-2} \int_{\Omega} |x-x_0|^{-n+2}|q_1(x) - q_2(x) |^2dx \\
& =\underline{c}^{n-2} \int_{\mathbb{R}} \int_{S^{n-1}_{>\beta}} |e^tQ(x_0+e^t w) |^2dw dt\\
& =\underline{c}^{n-2} \left\|\widetilde{Q} \right\|_{L^2(\mathbb{R}; L^2(S^{n-1}_{>\beta}))}^2.
\end{align*}




\begin{appendix}
 
 \section{Cauchy transform and its properties}
  The aim of this Appendix is proving Theorem \ref{remov_esti_phi_1}, showing that it is possible to remove the exponential term from the left side of estimate \eqref{rremov_exp_tez}.  
  \begin{thm} \label{remov_esti_phi_1} Let $0<\beta<\beta^\prime<1$ be as in Lemma \ref{g_e_c_o_s} and $\lambda>0$. Then there exist $C:=C(\lambda)>0$ and $t_0\in (0,1)$ such that
\begin{equation}\label{emov_sti_hi_2}
\underset{\underset{\eta\in y^\perp\cap S^{n-1}}{y\in \partial S^{n-1}_{>\beta^\prime}}}{\sup}\left| \int_{B\cap \Pi_{y, \eta}} e^{i\lambda z} (A_t+iA_\theta)(z, \eta)\, dz\,d\overline{z} \right| \lesssim  C(\lambda)  \left | \log \left \| \Lambda_1^\sharp- \Lambda_2^\sharp \right \| \right |^{-t_0/10}.
\end{equation}
\end{thm}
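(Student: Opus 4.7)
The plan is to exploit the freedom in the choice of holomorphic test function $a_0$ in Corollary~\ref{cor:Phi_new_coordinates} in order to simultaneously (i) approximately cancel the weight $e^{i\Phi}$, (ii) localize the outer $\eta$-integration around an arbitrary fixed $\eta_0\in y^\perp\cap S^{n-1}$, and (iii) recover the factor $e^{i\lambda z}$ claimed in the statement. Concretely, fixing $y\in \partial S^{n-1}_{>\beta^\prime}$ and $\eta_0\in y^\perp\cap S^{n-1}$, I would test \eqref{rremov_exp_tez} with
\[
a_0(z,\eta) := e^{i\lambda z}\, h(z)\, \rho_\epsilon(\eta-\eta_0),
\]
where $\rho_\epsilon$ is a smooth approximate identity at scale $\epsilon$ on the $(n-2)$-sphere $y^\perp\cap S^{n-1}$, and $h(z)$ is a function, holomorphic in $z$ on a disk containing $\Psi_y(B)\cap \Pi_{y,\eta_0}$, that approximates $e^{-i\Phi(\cdot,\eta_0)}$. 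Since $\partial_{\bar z} h = 0$, this $a_0$ is admissible.

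After inserting this $a_0$, I would split
\[
e^{i\Phi(z,\eta)} h(z) = h(z)\,e^{i\Phi(z,\eta_0)} + h(z)\bigl[e^{i\Phi(z,\eta)} - e^{i\Phi(z,\eta_0)}\bigr].
\]
The first term equals $1$ up to the $L^\infty$-approximation error $\|h-e^{-i\Phi(\cdot,\eta_0)}\|_{L^\infty}$, while the second bracket is $O(|\eta-\eta_0|)$ by the $\eta$-Lipschitz regularity of $\Phi$ inherited from \eqref{sfsfsdxb_f1} and from the $C^{1+\sigma}$ regularity of $A$. Integrating against $\rho_\epsilon$, the first contribution identifies pointwise (in $\eta_0$) with the desired integral, while the second is $O(\epsilon)$. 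On the right-hand side of \eqref{rremov_exp_tez} one has $\|\rho_\epsilon\|_{H^1(y^\perp\cap S^{n-1})} \lesssim \epsilon^{-n/2}$, and $\|h\|_{H^1}$ will depend polynomially on a parameter $\delta_h$ measuring the quality of the holomorphic approximation.

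The crux of the argument is the construction of $h$, and this is where Lemma~\ref{quantification_complex_argument} (the quantitative holomorphic extension from the complex unit sphere to the complex unit ball, mentioned in the introduction) plays the central role. Since $\Phi(\cdot,\eta_0)$ is bounded and smooth on $\mathbb{C}$ (being the Cauchy transform of a compactly supported $C^{1+\sigma}$ function, see \eqref{sfsfsdxb_f1}), and holomorphic in $z$ outside $\supp A$, I would apply the lemma on a disk enclosing $\Psi_y(B)\cap \Pi_{y,\eta_0}$ to produce a holomorphic $h$ on this disk with
\[
\|h - e^{-i\Phi(\cdot,\eta_0)}\|_{L^\infty} \lesssim \delta_h^{\alpha}, \qquad \|h\|_{H^1} \lesssim \delta_h^{-N},
\]
for some $\alpha, N>0$. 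Putting everything together yields a control of the target integral by
\[
C(\lambda)\Bigl(\delta_h^{\alpha} + \epsilon + \epsilon^{-n/2}\,\delta_h^{-N}\,|\log\|\Lambda_1^\sharp-\Lambda_2^\sharp\||^{-1}\Bigr).
\]

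The final step is a three-way balance of $\epsilon$, $\delta_h$ and $|\log\|\Lambda_1^\sharp-\Lambda_2^\sharp\||^{-1}$: taking $\epsilon$ and $\delta_h$ as suitable positive powers of $|\log\|\Lambda_1^\sharp-\Lambda_2^\sharp\||^{-1}$ produces the stated exponent $t_0/10$ for some $t_0\in(0,1)$ depending only on $\alpha, N, n$. Taking the supremum over $y$ and $\eta_0$ gives \eqref{emov_sti_hi_2}. The main obstacle is the quantitative holomorphic extension, which substitutes for the asymptotic-at-infinity argument of the Cauchy transform that is unavailable in the present bounded-in-$z$ geodesic coordinates; once this is available, the decomposition above and the three-scale optimization are comparatively routine.
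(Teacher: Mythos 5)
There is a genuine gap at the heart of your plan: the holomorphic approximant $h$ of $e^{-i\Phi(\cdot,\eta_0)}$ that you require does not exist. On the planar section $B\cap\Pi_{y,\eta_0}$ the function $\Phi(\cdot,\eta_0)$ satisfies the $\overline{\partial}$-equation \eqref{phi_partia_uno}, so $\partial_{\overline z}\,e^{-i\Phi(\cdot,\eta_0)}=-\tfrac{1}{2}e^{-i\Phi(\cdot,\eta_0)}(A_t+iA_\theta)(\cdot,\eta_0)$, which is not identically zero precisely in the interesting case. Since uniform limits of holomorphic functions are holomorphic, the $L^\infty$-distance from $e^{-i\Phi(\cdot,\eta_0)}$ to the holomorphic functions on any disk containing the support of $(A_t+iA_\theta)(\cdot,\eta_0)$ is a fixed positive number; hence your error term $\|h-e^{-i\Phi(\cdot,\eta_0)}\|_{L^\infty}\lesssim\delta_h^\alpha$ cannot be made small, and the three-scale optimization collapses. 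Relatedly, you are using Lemma \ref{quantification_complex_argument} as if it were a generic quantitative holomorphic-approximation device with a free accuracy parameter $\delta_h$. It is not: it only splits a function on $S^1$ into its nonnegative-frequency part (which extends holomorphically into the disk) plus a remainder whose smallness is governed by the size of the \emph{negative} Fourier coefficients, and that smallness is a hypothesis that must be verified. In the paper it is extracted from the data itself, not assumed.

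The actual argument is structured quite differently and cannot be shortcut in the way you propose. First, the supremum in $\eta$ is obtained by interpolating a dual-norm estimate (Lemma \ref{iner_until_infuzz}, which carries the factor $|\log\|\Lambda_1^\sharp-\Lambda_2^\sharp\||^{-1}$) against a positive-smoothness bound in $\eta$ (Lemma \ref{lem_interp_L_infty}); your approximate-identity localization $\rho_\epsilon(\eta-\eta_0)$ is a plausible, if lossier, substitute for this step, so it is not the main problem. The essential mechanism is then: (i) use Stokes' theorem and \eqref{phi_partia_uno} to convert the weighted area integral over $B\cap\Pi_{y,\eta}$ into a boundary integral of $e^{i\widetilde\Phi}$ over the circle $S^1$ (after the conformal change $\mathcal{T}_{y,\eta}$); (ii) test with the monomials $\widetilde z^{\,k}$ to deduce from \eqref{iner_until_infuzx} that the negative Fourier coefficients of $e^{i\widetilde\Phi(\cdot,\eta)}$ are $O(k\,|\log\|\Lambda_1^\sharp-\Lambda_2^\sharp\||^{-t_0})$ --- this is where the data produces the smallness that feeds Lemma \ref{quantification_complex_argument}; (iii) split $e^{i\widetilde\Phi}=F+\widetilde{\mathbb F}$ with $F=e^{H}$ holomorphic and nonvanishing (Lemma \ref{lemma_bun_acot}) and show $H\approx i\widetilde\Phi$ on $S^1$; (iv) choose the amplitude $H e^{-H}e^{i\lambda z}$ (not an approximation of $e^{-i\Phi}$) and apply Stokes a second time, so that the boundary integral of $i\widetilde\Phi$ against $e^{i\lambda z}$ reproduces exactly the unweighted area integral of $(A_t+iA_\theta)e^{i\lambda z}$, with the errors $II$ and $III$ controlled by steps (ii)--(iii). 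In short, the holomorphic approximation happens only on the boundary circle, its accuracy is supplied by the DN-map estimate rather than chosen freely, and the exponential is removed through the identity $F=e^H$ and a second integration by parts --- none of which is present in your proposal.
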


To prove this result, we use similar arguments from \cite{DSFKSjU}, where the authors proved an identifiability result (if $\Lambda_1^\sharp= \Lambda_2^\sharp$ then $dA_1=dA_2$ and $q_1=q_2$). They removed the exponential term when the left side of \eqref{rremov_exp_tez} is identically zero using holomorphic extensions of complex functions satisfying a $\overline{\partial}$-equation like \eqref{phi_partia_uno}. Their method works satisfactorily in a pointwise setting. This is the main reason why we have/need $L^\infty$-bounds in \eqref{emov_sti_hi_2}. Next, we derive a quantitative version of their holomorphic argument, see Lemma \ref{fourier_coeff_k}. Recall the geodesic coordinates given by Lemma \ref{quantification_complex_argument}. The first part of this Appendix is related to the regularity of  solutions of a $\overline{\partial}$-equation in $\mathbb{C}$. Later, we introduce some Fourier estimates on $S^1$. Finally, we prove Theorem \ref{remov_esti_phi_1}.\\

 Throughout this work, we have met several times with an equation in $\mathbb{C}$ of the form
 \begin{equation}\label{caushy}
 \partial_{\overline{z}} \, \mathcal{G} =G, \qquad  z=t+i\theta , \qquad  2 \partial_{\overline{z}}:= \partial_t + i\partial_\theta,
 \end{equation}
where $G$ is a given complex-valued function. Note that if $\mathcal{G}$ is a solution, then its holomorphic perturbations are also solutions. We are particularly interested in solutions given by the so-called the Cauchy transform of $G$:
  \begin{equation}\label{caushy_end}
(\mathcal{C}G)(z):= \int_{\mathbb{C}} \frac{G(\xi)}{z-\xi} \,d\xi d \overline{\xi},
 \end{equation}
Now we study how the regularity of $\mathcal{C}G$ depends on the smoothness of $G$. The following result is a collection of \cite[Lemma 2.1]{Sun}, \cite[Lemma 4.6]{Sa1}, \cite[Theorem 4.3.13]{AIM}, \cite[Chapter V/Lemma 1]{Ahl} and \cite[Proposition B.3.1]{LPMthesis}.
\begin{lem}\label{delta_estimate} 
Let $k\in \mathbb{N}$ and $\gamma\in (0,1)$. Let $G\in W^{k, \infty}(\mathbb{C})$ with $\supp G\subset B_R(0)$, $R>0$. Then $\mathcal{C}G \in W^{k, \infty}(\mathbb{C})$ solves (\ref{caushy}) and satisfies 
\[
\left \| \mathcal{C}G  \right \|_{W^{k, \infty}(\mathbb{C})} \lesssim \left \| G \right \|_{W^{k, \infty}(\mathbb{C})}.
\]
Moreover, if  $G\in C^{k+\gamma}(\mathbb{C})$ then the following estimate holds for all $\tilde p>2/\gamma$:
\[
\left \| \mathcal{C}G  \right \|_{C^{k+1+\gamma- 2/\tilde p}(\mathbb{C})}\lesssim\left \| G \right \|_{C^{k+\gamma}(\mathbb{C})}. 
\]
The implicit constants in both previous estimates only depends on $R$. 
\end{lem}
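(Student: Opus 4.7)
The lemma packages two classical properties of the Cauchy transform: an $L^\infty$-based mapping property together with the PDE \eqref{caushy}, and a Hölder-regularity improvement with a controlled loss $2/\tilde p$. The plan is to dispatch the first by a direct integral split combined with induction on $k$, and to treat the Hölder estimate by first reducing to $k=0$ and then combining Calderón--Zygmund theory for the Beurling transform with a near-field/far-field kernel decomposition.

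For the PDE and the $W^{k,\infty}$ bound, the distributional identity $\partial_{\overline z}(1/z)=\pi\delta_0$ combined with Fubini gives $\partial_{\overline z}\mathcal C G=\pi G$ in $\mathcal D'(\mathbb C)$ at once. For $k=0$ I would split
\[
|\mathcal C G(z)|\leq \int_{|\xi-z|\leq 1}\frac{|G(\xi)|}{|\xi-z|}\,dA(\xi)+\int_{|\xi-z|>1}\frac{|G(\xi)|}{|\xi-z|}\,dA(\xi),
\]
and bound the first piece by $2\pi\|G\|_{L^\infty}$ via polar coordinates and the second by $\|G\|_{L^1}\leq \pi R^2\|G\|_{L^\infty}$. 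For $k\geq 1$ I would differentiate under the integral: every $\partial_z$-derivative can be transferred onto $G$ by integration by parts in $\xi$ (compact support of $G$ kills the boundary terms, and the kernel is $\xi$-holomorphic off the diagonal), while every $\partial_{\overline z}$-derivative collapses to $\pi G$ by the PDE. Iterating expresses any $\partial^\alpha\mathcal C G$ with $|\alpha|\leq k$ as a finite combination of terms of the form $\mathcal C(\partial^\beta G)$ and $\pi\,\partial^\beta G$ with $|\beta|\leq k$, and the $k=0$ bound applied term by term closes the estimate.

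For the Hölder improvement, the same reduction brings me to $k=0$, so it remains to prove $\|\mathcal C H\|_{C^{1+\gamma-2/\tilde p}(\mathbb C)}\lesssim \|H\|_{C^\gamma}$ for $H\in C^\gamma_c(B_R)$ and $\tilde p>2/\gamma$. The two identities $\partial_{\overline z}\mathcal C H=\pi H\in C^\gamma$ and $\partial_z\mathcal C H=TH$, with $T$ the Beurling transform, already give $\nabla\mathcal C H\in L^{\tilde p}$ by the Calderón--Zygmund $L^{\tilde p}$-boundedness of $T$ applied to $H\in L^{\tilde p}$ (compact support). To upgrade this to a $C^{\gamma-2/\tilde p}$ bound on $\nabla\mathcal C H$, I would split the difference $\nabla\mathcal C H(z)-\nabla\mathcal C H(z')$ at scale $2|z-z'|$: on the near ball $|\xi-z|\leq 2|z-z'|$, Hölder's inequality together with the $L^{\tilde p}$-boundedness of $T$ produces the factor $|z-z'|^{\gamma-2/\tilde p}$; on the complementary far field, the pointwise smoothness of the kernel $1/z^2$ combined with the $C^\gamma$ seminorm of $H$ (subtracting an appropriate constant to exploit Hölder cancellation) provides the remaining control.

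The main obstacle is the precise quantitative tracking of the $2/\tilde p$ loss. Unlike the classical sharper statement $\mathcal C:C^\gamma_c\to C^{1+\gamma}_{\rm loc}$, which relies on the $C^\gamma\to C^\gamma$ continuity of Calderón--Zygmund operators, the version stated here uses only the simpler $L^{\tilde p}$-boundedness of $T$ plus the Morrey-type passage from $L^{\tilde p}$ gradient bounds to Hölder continuity, yielding the exponent $\gamma-2/\tilde p$ that degrades as $\tilde p\downarrow 2/\gamma$. Compiling these ingredients (elementary integral split, integration by parts in $\xi$, $L^{\tilde p}$-theory for the Beurling transform, and the near/far kernel decomposition) yields both statements of the lemma.
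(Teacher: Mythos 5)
Your proposal is correct in substance, but note that the paper does not actually prove this lemma: it is stated as a collection of known results, with references to Sun, Salo, Astala--Iwaniec--Martin, Ahlfors and the thesis \cite{LPMthesis}, so any self-contained argument is already ``a different route.'' Your treatment of the first part is the standard one and is fine: the convolution structure $\mathcal{C}G=K*G$ with $K(w)=1/w$ gives $\partial^\alpha\mathcal{C}G=\mathcal{C}(\partial^\alpha G)$ for $|\alpha|\leq k$ (so you do not even need to alternate between transferring $\partial_z$ and invoking the $\overline\partial$-equation), and the near/far split $\int_{|w|\leq 1}|w|^{-1}\,dA+\|G\|_{L^1}$ yields the $W^{k,\infty}$ bound with a constant depending only on $R$; just be aware that with the paper's measure convention $d\xi\,d\overline\xi$ the identity $\partial_{\overline z}\mathcal{C}G=G$ holds without your factor $\pi$ (a harmless normalization issue). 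For the H\"older part, your reduction to $k=0$ and the near/far kernel decomposition with H\"older cancellation is exactly the classical Schauder-type argument, and it in fact yields the stronger conclusion $\|T H\|_{C^\gamma}\lesssim\|H\|_{C^\gamma}$, hence $\mathcal{C}:C^\gamma_c\to C^{1+\gamma}_{\mathrm{loc}}$, from which the stated $C^{k+1+\gamma-2/\tilde p}$ bound follows a fortiori; the cited references instead obtain the $2/\tilde p$ loss by routing through $L^{\tilde p}$/fractional Sobolev (or Besov) boundedness of the Beurling transform plus Sobolev embedding. One step of your sketch is misphrased, though not fatally: in the near ball you invoke ``H\"older's inequality together with the $L^{\tilde p}$-boundedness of $T$,'' but the operator norm of $T$ on $L^{\tilde p}$ controls integral averages, not the pointwise value $TH(z)$, and the dual exponent of the kernel $|z-\xi|^{-2}$ is never locally integrable; the correct (and simpler) estimate there is the direct cancellation bound $\bigl|\int_{|\xi-z|\leq 2|z-z'|}\frac{H(\xi)-H(z)}{(z-\xi)^2}\,dA\bigr|\lesssim\|H\|_{C^\gamma}\int_{|w|\leq 2|z-z'|}|w|^{\gamma-2}\,dA\sim\|H\|_{C^\gamma}|z-z'|^{\gamma}$, which is even better than the exponent $\gamma-2/\tilde p$ you need. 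With that replacement your argument is complete and self-contained.
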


Our approach involves functions depending on the complex variable $z$ and $\eta\in y^\perp\cap S^{n-1}$ for a fixed $y\in \partial S^{n-1}_{>\beta^\prime}\equiv S^{n-2}$. This identification will be considered throughout this Appendix. In this sense, for a function $G: \mathbb{C}\times S^{n-2}\to \mathbb{C}$ (in our case $G\in C^{1+\sigma}(\mathbb{C}\times S^{n-2})$) we define its Cauchy transform with respect to the first variable as
\[
(\mathcal{C}{G})(z, \eta)= \int_{\mathbb{C}}\dfrac{G(\xi, \eta)}{z-\xi}d\xi d\overline{\xi}.
\]
For any $\eta\in S^{n-2}$ and according to \cite[Theorem 4.3.10]{AIM}, the Cauchy transform of $G(\cdot, \eta)$ satisfy in $L^2(\mathbb{C})$ 
\begin{equation}\label{casi_1}
(\partial_{\overline{z}}\, \mathcal{C}G)(\cdot, \eta)= G(\cdot, \eta), \qquad (\partial_{z}\, \mathcal{C}G)(\cdot, \eta)= (\mathcal{S}G)(\cdot, \eta),
\end{equation}
where 
\[
(\mathcal{S}G)(z, \eta):= -\underset{\epsilon\to 0}{\lim}\int_{|z-\xi|>\epsilon} \frac{G(\xi, \eta)}{(z-\xi)^2} d\xi d\overline{\xi}
\]
is the Beurling transform with respect to the first variable of $G$. By \cite[identity (4.21)]{AIM}, it acts as an isometry in $L^2(\mathbb{C})$
\begin{equation}\label{casi_2}
\left\|(\mathcal{S}G)(\cdot, \eta) \right\|_{L^2(\mathbb{C})}= \left\|G(\cdot, \eta) \right\|_{L^2(\mathbb{C})}.
\end{equation}
\begin{lem}\label{new_delta_bar_estimate}
Let $G\in C^{1}(\mathbb{C}\times S^{n-2})$. Assume that there exists $R>0$ such that $\supp G(\cdot, \eta)\subset B_R(0)$ for all $\eta\in S^{n-2}$. Then there exists a positive constant $C$ such that
\[
\left\| \mathcal{C} G\right\|_{W^{1, \infty}(S^{n-2}; H^1(B_R(0))} \leq C \left\| G \right\|_{W^{1, \infty}(\mathbb{C}\times S^{n-2})},
\]
where
\begin{multline*}
\left\| \mathcal{C} G \right\|_{W^{1, \infty}(S^{n-2}; H^1(B_R(0))}:= \underset{\eta\in S^{n-2}}{\sup}\left( \left\| \mathcal{C} G(\cdot, \eta)\right\|_{L^2(B_R(0))}   \right.\\
\left.+ \left\| (\partial_{z}\, \mathcal{C} G)(\cdot, \eta) \right\|_{L^2(B_R(0))}  + \sum_{j} \left\| (\partial_{\eta_j} \mathcal{C} G)(\cdot, \eta)\right\|_{L^2(B_R(0))} \right). 
\end{multline*}
\end{lem}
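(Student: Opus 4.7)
The plan is to bound the three pieces of the norm on the left-hand side separately, using the already available tools: Lemma~\ref{delta_estimate} for pointwise Cauchy transform estimates, the identities in \eqref{casi_1}, and the Beurling transform isometry \eqref{casi_2}. The compact support hypothesis on $G(\cdot,\eta)$ (uniform in $\eta$) is used repeatedly to pass from $L^\infty$-bounds on $\mathbb{C}$ to $L^2$-bounds on $B_R(0)$.

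First I would handle the $L^2$-term. Fixing $\eta\in S^{n-2}$, apply Lemma~\ref{delta_estimate} with $k=0$ to $G(\cdot,\eta)\in W^{0,\infty}(\mathbb{C})$ (which is supported in $B_R(0)$ uniformly in $\eta$) to obtain
\[
\|\mathcal{C}G(\cdot,\eta)\|_{L^\infty(\mathbb{C})} \lesssim \|G(\cdot,\eta)\|_{L^\infty(\mathbb{C})} \le \|G\|_{W^{1,\infty}(\mathbb{C}\times S^{n-2})}.
\]
Restricting to $B_R(0)$ converts this into the required $L^2$-bound with constant depending only on $R$.

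Next I would treat the $\partial_z$-derivative. By the second identity of \eqref{casi_1}, one has $(\partial_z \mathcal{C}G)(\cdot,\eta)=(\mathcal{S}G)(\cdot,\eta)$ in $L^2(\mathbb{C})$, and then the isometry property \eqref{casi_2} gives
\[
\|(\partial_z \mathcal{C}G)(\cdot,\eta)\|_{L^2(B_R(0))} \le \|(\mathcal{S}G)(\cdot,\eta)\|_{L^2(\mathbb{C})} = \|G(\cdot,\eta)\|_{L^2(\mathbb{C})} \lesssim \|G\|_{W^{1,\infty}},
\]
where in the last step I again use that $G(\cdot,\eta)$ is supported in $B_R(0)$.

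For the $\eta$-derivatives, the plan is to differentiate under the integral sign in the definition \eqref{caushy_end}. Since $G\in C^1(\mathbb{C}\times S^{n-2})$ with support in $B_R(0)\times S^{n-2}$, and the Cauchy kernel $\xi\mapsto 1/(z-\xi)$ is locally integrable on $\mathbb{C}$, a dominated convergence argument shows that $\partial_{\eta_j}\mathcal{C}G(z,\eta)=\mathcal{C}(\partial_{\eta_j}G)(z,\eta)$. Applying Lemma~\ref{delta_estimate} once more, this time to $\partial_{\eta_j}G(\cdot,\eta)\in L^\infty(\mathbb{C})$ (still compactly supported in $B_R(0)$ since differentiation in $\eta$ does not enlarge the support in $z$), yields
\[
\|\partial_{\eta_j}\mathcal{C}G(\cdot,\eta)\|_{L^\infty(\mathbb{C})} \lesssim \|\partial_{\eta_j}G(\cdot,\eta)\|_{L^\infty(\mathbb{C})} \le \|G\|_{W^{1,\infty}(\mathbb{C}\times S^{n-2})},
\]
and the $L^2(B_R(0))$-bound follows. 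Taking the supremum over $\eta\in S^{n-2}$ in each of the three bounds and summing gives the stated inequality. I expect no genuine obstacle here; the only small care is in justifying the differentiation under the integral in the third step, but this is standard given the compact support and mild Cauchy-kernel singularity.
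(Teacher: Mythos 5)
Your proposal is correct and follows essentially the same route as the paper: both bound the three terms separately, treating the $\partial_z$-term via \eqref{casi_1}--\eqref{casi_2} and the $\eta$-derivatives through the commutation $\partial_{\eta_j}\mathcal{C}G=\mathcal{C}\partial_{\eta_j}G$. The only cosmetic difference is that the paper invokes the local $L^2$-boundedness of the Cauchy transform from \cite[Theorem 4.3.12]{AIM} for the zeroth-order and $\eta$-derivative terms, whereas you use the $L^\infty$-bound of Lemma \ref{delta_estimate} and then pass to $L^2(B_R(0))$ using the boundedness of the ball; both are valid.
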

\begin{proof}
Fix an arbitrary $\eta\in S^{n-2}$. Combining \cite[Theorem 4.3.12]{AIM} and the compactly supported property of $G(\cdot, \eta)$, we get 
\begin{align*}
\left\|(\mathcal{C}G)(\cdot, \eta) \right\|_{L^2(B_R(0))} \leq \left\|(\mathcal{C}G)(\cdot, \eta) \right\|_{L^2(B_{2R}(0))}\\
\leq \left\| G(\cdot, \eta)\right\|_{L^2(B_R(0))} \lesssim \left\| G\right\|_{W^{1, \infty}(\mathbb{C}\times S^{n-2})}.
\end{align*}
By \eqref{casi_1}-\eqref{casi_2}, we immediately deduce
\begin{align*}
 \left\|  (\partial_{z}\, \mathcal{C}G)(\cdot, \eta) \right\|_{L^2(B_R(0))}\leq \left\|  (\partial_{z}\, \mathcal{C}G)(\cdot, \eta) \right\|_{L^2(\mathbb{C})} \\
 = \left\| G(\cdot, \eta)\right\|_{L^2(B_R(0))} \lesssim \left\| G\right\|_{L^{\infty}(\mathbb{C}\times S^{n-2})}.
\end{align*}
Since the Cauchy transform is only related to the first variable, it follows that 
\begin{multline*}
\left\|( \nabla_\eta\, \mathcal{C}G)(\cdot, \eta) \right\|_{L^2(B_R(0))}=\left\|( \mathcal{C}\nabla_\eta\,G)(\cdot, \eta) \right\|_{L^2(B_R(0))} \\
\leq \left\|( \mathcal{C} \nabla_\eta\, G)(\cdot, \eta) \right\|_{L^2(B_{2R}(0))}  \leq 6 \left\|(\nabla_\eta\,G)(\cdot, \eta) \right\|_{L^2(B_R(0))}
 \lesssim \left\| G\right\|_{W^{1, \infty}(\mathbb{C}\times S^{n-2})}.
\end{multline*}
We conclude the proof by combining these estimates. 
\end{proof}

\section{Fourier transform on $S^1$ and related estimates}

For a given function $\mathbb{F}:S^1\subset\mathbb{C}\to \mathbb{C}$ we define its Fourier coefficient at $k\in \mathbb{Z}$ by 
\begin{equation}\label{fourier_coeff_k}
\widehat{\mathbb{F}}(k)= \int_{S^1} e^{-ik z} \mathbb{F}(z)dz.
\end{equation}
\begin{lem}\label{quantification_complex_argument}
Let $\mathbb{F}\in H^{m}(S^1)$ be a complex-valued function with $m>1/2$. Assume that their Fourier coefficients satisfy 
\[
|\widehat{\mathbb{F}}(-k)| \leq C \varepsilon\,  k, \quad \; k\in \mathbb{Z}_+,
\]
where $C>0$ and $0<\varepsilon<1$. Then the following serie 
\[
\widetilde{\mathbb{F}}(z):= \sum_{k=-\infty}^{-1}    \widehat{\mathbb{F}}(k)\, e^{ i k z}
\]
converges in $H^\beta(S^1)$ for all $\beta\in (0, m-1/2)$. Furthermore, one has
\[
\left\|  \widetilde{\mathbb{F}} \right\|_{H^\beta(S^1)} \lesssim  \varepsilon^{\frac{m-1/2-\beta}{2(m+1)}}.
\]
The implicit constant depends on $C$, $m$, and $\beta$, and it is independent of $\varepsilon$.
Finally, we have the inequality 
\[
\left\| \mathbb{F} - \widetilde{\mathbb{F}} \right\|_{H^{\widetilde{\beta}}(S^1)} \lesssim \left\| \mathbb{F} \right\|_{H^{\widetilde{\beta}}(S^1)}, \qquad \widetilde{\beta}\in [0,m].
\]
\end{lem}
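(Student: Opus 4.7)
The plan is to view $\widetilde{\mathbb{F}}$ as the orthogonal projection $P_-\mathbb{F}$ of $\mathbb{F}$ onto negative Fourier modes of $L^2(S^1)$, and to play the hypothesis on the size of the Fourier coefficients against the $H^m$-regularity of $\mathbb{F}$ via a low/high frequency split.

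First I would collect everything that follows ``for free'' from the projection viewpoint. Since $P_-$ is an orthogonal projection in Fourier, Parseval gives $\|P_- g\|_{H^s}\leq \|g\|_{H^s}$ for every $s\in \mathbb{R}$. In particular, as $\mathbb{F}\in H^m$ the tail $\sum_{k=-N}^{-1}\widehat{\mathbb{F}}(k)e^{ikz}\to \widetilde{\mathbb{F}}$ converges in $H^m$, hence in $H^\beta$ for every $\beta<m$, which covers the range $\beta\in(0,m-1/2)$ in the statement. The decomposition $\mathbb{F}-\widetilde{\mathbb{F}}=\sum_{k\geq 0}\widehat{\mathbb{F}}(k)e^{ikz}$ is the complementary projection onto non-negative modes, and the same Parseval argument yields the last inequality $\|\mathbb{F}-\widetilde{\mathbb{F}}\|_{H^{\widetilde\beta}}\leq \|\mathbb{F}\|_{H^{\widetilde\beta}}$ for every $\widetilde\beta\in[0,m]$.

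For the quantitative bound on $\|\widetilde{\mathbb{F}}\|_{H^\beta}$, I would split the frequency sum at a threshold $N\in \mathbb{N}$ to be optimized:
\[
\|\widetilde{\mathbb{F}}\|_{H^\beta}^2 \sim \sum_{k=1}^{N}(1+k^2)^\beta|\widehat{\mathbb{F}}(-k)|^2 \;+\; \sum_{k>N}(1+k^2)^\beta|\widehat{\mathbb{F}}(-k)|^2.
\]
The low-frequency piece is controlled by the hypothesis:
\[
\sum_{k=1}^{N}(1+k^2)^\beta|\widehat{\mathbb{F}}(-k)|^2 \leq C^{2}\varepsilon^{2}\sum_{k=1}^{N}(1+k^2)^{\beta+1}\lesssim \varepsilon^{2}N^{2\beta+3}.
\]
For the tail, since $\beta<m$ the weight $(1+k^2)^{\beta-m}$ is decreasing, so
\[
\sum_{k>N}(1+k^2)^\beta|\widehat{\mathbb{F}}(-k)|^2 \leq N^{2(\beta-m)}\sum_{k>N}(1+k^2)^{m}|\widehat{\mathbb{F}}(-k)|^2 \leq N^{2(\beta-m)}\|\mathbb{F}\|_{H^m}^{2}.
\]
Choosing $N=\varepsilon^{-\tau}$ and balancing the two contributions produces a bound of the form $\varepsilon^{q(m,\beta)}$ with $q>0$; the explicitly claimed (slightly sub-optimal) form $\varepsilon^{(m-1/2-\beta)/(2(m+1))}$ then follows a fortiori, and the restriction $\beta<m-1/2$ is precisely what makes this exponent positive.

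The main technical obstacle is that the hypothesis gives only \emph{linear growth} of the negative Fourier coefficients while $\mathbb{F}\in H^m$ delivers only $\ell^2$-weighted decay, so one cannot combine the two pointwise. The frequency split is the device that decouples them: the hypothesis is used on $k\leq N$ where growth is harmless, and the $H^m$-bound on $k>N$ through monotonicity of the weight. Once the split is in place, tracking constants to ensure they depend only on $C$, $m$, $\beta$ and not on $\varepsilon$ is routine.
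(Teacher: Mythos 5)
Your proposal is correct and follows essentially the same route as the paper: Parseval plus a low/high frequency split at an $\varepsilon$-dependent threshold $N(\varepsilon)$, with the hypothesis controlling the low modes and the $H^m$ norm controlling the tail, then optimizing $N$ as a power of $\varepsilon$, and the final inequality is the same projection/Parseval observation. The only cosmetic difference is that you bound the tail by monotonicity of the weight $(1+k^2)^{\beta-m}$ (yielding a slightly sharper exponent, from which the stated one follows a fortiori since $0<\varepsilon<1$), whereas the paper uses the pointwise decay $|\widehat{\mathbb{F}}(k)|\lesssim \langle k\rangle^{-m}$ and sums the resulting series, which is where the restriction $\beta<m-1/2$ enters there.
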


\begin{rem}\label{Dini_condition}
If $\mathbb{F}\in C^{m}(S^1)$ is H\"older continuous, that is $m\in (0,1)$, then one has the uniformly convergent representation
\[
\mathbb{F}(z)= \sum_{k=-\infty}^{\infty} \widehat{\mathbb{F}}(k)\, e^{ i k z}= \sum_{k=-\infty}^{-1} \widehat{\mathbb{F}}(k)\, e^{ i k z} + \sum_{k=0}^{\infty} \widehat{\mathbb{F}}(k)\, e^{ i k z} = \widetilde{\mathbb{F}}(z)  + \sum_{k=0}^{\infty} \widehat{\mathbb{F}}(k)\, e^{ i k z}.
\]
This is true because any H\"older continuous function satisfies the so-called Dini integral condition, one necessary condition to have the above representation. We deduce that $\mathbb{F}$ can be decomposed as the sum of a holomorphic and an anti-holomorphic part from the identity $\mathbb{F}= \mathbb{F}- \widetilde{\mathbb{F}} + \widetilde{\mathbb{F}}$. The term $\mathbb{F}- \widetilde{\mathbb{F}}$ can be extended holomorphically into the interior of $S^1$ and Lemma \ref{quantification_complex_argument} gives $H^\beta$-bounds for $ \widetilde{\mathbb{F}}$ and $H^{\widetilde{\beta}}$-bounds for $\mathbb{F}- \widetilde{\mathbb{F}}$.
\end{rem}
\begin{proof}
Let $\langle k \rangle=(1+k^2)^{1/2}$. For any fixed $\beta\in (0, m-1/2)$, we have 
\[
|\langle k \rangle^\beta \widehat{\mathbb{F}}(k)| \lesssim \langle k \rangle^{-m + \beta}, \quad k\in \mathbb{Z}\setminus \left\{ 0\right\}.
\]
By hypothesis, we deduce
\[
|\langle k \rangle^\beta\widehat{\mathbb{F}}(-k)| \leq C \varepsilon\,  {\langle k \rangle}^{1+\beta}, \quad \; k\in \mathbb{Z}_+.
\]
By Parseval identity and picking a natural number $N=N(\varepsilon)$ (which will be fixed later) depending on $\varepsilon$, we get
\begin{align*}
\left\| \widetilde{ \mathbb{F}}\right\|_{H^\beta(S^1)}^2 = \sum_{k=1}^{+\infty} |\langle k \rangle^\beta \widehat{\mathbb{F}}(-k)|^2=  \sum_{k=1}^{N(\varepsilon)-1} |\langle k \rangle^\beta \widehat{\mathbb{F}}(-k)|^2+  \sum_{k=N(\varepsilon)}^{\infty} | \langle k \rangle^\beta\widehat{\mathbb{F}}(-k)|^2\\
 \lesssim \varepsilon^2  \sum_{k=1}^{N(\varepsilon)-1} \langle k \rangle^{2(1+\beta)} +   \sum_{k=N(\varepsilon)}^{\infty} \langle k \rangle^{2(-m+\beta)}
\lesssim \varepsilon^2 N(\varepsilon)^{2(1+\beta)+1} + N(\varepsilon)^{2(-m+\beta)+1}.
\end{align*}
Note that since $\beta\in (0, m-1/2)$, it follows that $2(-m+\beta)+1<0$. This fact is crucial to ensure the convergence of the serie $\sum_{k=N(\varepsilon)}^{\infty}$ on the right.
%
%
%
%
%
We claim that for each $0<\widetilde{\theta}<2$, there exists $N(\varepsilon)$ (also depending on $\widetilde{\theta}$) such that
\[
 \varepsilon^2 N(\varepsilon)^{2(1+\beta)+1} \leq \varepsilon^{\widetilde{\theta}}, \quad N(\varepsilon)^{2(-m+\beta)+1} \leq\varepsilon^{\widetilde{\theta}}.
\]
Indeed, both conditions are satisfied if
\[
\varepsilon^{\frac{\widetilde{\theta}}{2(-m+\beta)+1}}  \leq N(\varepsilon) \leq \varepsilon^{\frac{\widetilde{\theta}-2}{2(1+\beta)+1}},
\]
which holds whenever
\[
\widetilde{\theta}\leq  -(2(-m+\beta)+1)(m+1)^{-1}. 
\]
This proves the claim. It remains to prove the $H^{\widetilde{\beta}}$- bound for $\mathbb{F} - \widetilde{\mathbb{F}}$.  It easily follows by applying once again Parseval identity. Let $\widetilde{\beta}\in [0,m]$. Then
\begin{multline*}
\left\| \mathbb{F} - \widetilde{\mathbb{F}}\right\|_{H^{\widetilde{\beta}}(S^1)}^2 = \sum_{k=0}^\infty  \langle k \rangle^{2\widetilde{\beta}}| \widehat{\mathbb{F}}(k) |^2 \leq \sum_{k=-\infty}^\infty  \langle k \rangle^{2\widetilde{\beta}}| \widehat{\mathbb{F}}(k) |^2 = \left\| \mathbb{F} \right\|_{H^{\widetilde{\beta}}(S^1)}^2.
\end{multline*}
\end{proof}

\begin{lem}\label{quantification_complex_argument_5}
Let $0<\varepsilon <1/2$. Then $|\log (1+z)|< 2\varepsilon$ for all $|z|<\varepsilon$.
\end{lem}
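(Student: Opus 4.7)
The plan is to use the standard power series expansion of $\log(1+z)$ which is valid inside the unit disk. Since $|z| < \varepsilon < 1/2 < 1$, we may write
\[
\log(1+z) = \sum_{n=1}^{\infty} \frac{(-1)^{n+1}}{n} z^n,
\]
where $\log$ denotes the principal branch of the complex logarithm (which is holomorphic on the slit plane $\mathbb{C}\setminus(-\infty,0]$ and contains the disk $\{|z|<1\}$). The hypothesis $\varepsilon<1/2$ guarantees that $1+z$ stays in a neighborhood of $1$, well away from the branch cut.

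Next I would estimate the series termwise, using $|z|<\varepsilon$:
\[
|\log(1+z)| \leq \sum_{n=1}^{\infty} \frac{|z|^n}{n} \leq \sum_{n=1}^{\infty} \frac{\varepsilon^n}{n} = -\log(1-\varepsilon).
\]
Then I would dominate this real-variable quantity by a geometric series:
\[
-\log(1-\varepsilon) = \sum_{n=1}^{\infty} \frac{\varepsilon^n}{n} \leq \varepsilon \sum_{n=0}^{\infty} \varepsilon^n = \frac{\varepsilon}{1-\varepsilon}.
\]
Finally, since $\varepsilon < 1/2$, one has $1-\varepsilon > 1/2$, hence $\varepsilon/(1-\varepsilon) < 2\varepsilon$, which yields the desired bound $|\log(1+z)| < 2\varepsilon$.

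There is no real obstacle here; the lemma is essentially a sharpening of the first-order Taylor estimate $\log(1+z)\approx z$, and the power series together with the geometric-series bound gives the constant $2$ for free once $\varepsilon<1/2$. The only minor point to verify is that the principal branch of $\log$ is the intended one (so that $\log 1 = 0$ and the series representation is correct), which is the natural convention given how the lemma will be used later in exponential estimates of the form $|e^a-e^b|$.
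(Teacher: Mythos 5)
Your proof is correct and follows essentially the same route as the paper: expand $\log(1+z)$ in its power series on the disk and dominate by a geometric series, using $\varepsilon<1/2$ to get the constant $2$. The only cosmetic difference is that the paper factors out $z$ and bounds the remaining series by $1/(1-\varepsilon)<2$, while you bound the full series by $\varepsilon/(1-\varepsilon)<2\varepsilon$ — the same estimate in a slightly different order.
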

\begin{proof}
Let $z\in B_\varepsilon(0)$. By Weierstrass M-test, we have the following power series representations
\[
\log (1+z)=  \sum_{k=1}^{+\infty}(-1)^{k+1}\, \dfrac{z^k}{k}= z\sum_{k=1}^{+\infty}(-1)^{k+1}\, \dfrac{z^{k-1}}{k}
\]
with uniform convergence in $B_\varepsilon(0)$. Hence, the result quickly follows since 
\[
\left| \sum_{k=1}^{+\infty}(-1)^{k+1}\, \dfrac{z^{k-1}}{k} \right| \leq\sum_{k=0}^{+\infty} \varepsilon^k = \frac{1}{1-\varepsilon}< 2.
\]
\end{proof}

Since the invertibility of the attenuated ray transform is only valid for small attenuations, which implies the knowledge of the Fourier transform of a suitable function related to the magnetic field and electric potential, we use the following result \cite[Lemma 4.1]{CDSFR1} to extend the Fourier transform information to the whole real line $\mathbb{R}$.
\begin{lem}\label{lemma_estension_caro_ruiz_DSF}
Let $H$ be a Hilbert space and $\sigma\in (0,1]$, there exists a positive constant $C$, depending on $\sigma$, such that for all $K>0$, all $0<\lambda_0\leq 1$ and all functions $f\in L^1_{comp}(\mathbb{R}; H)\cap H^\sigma(\mathbb{R};H)$ with values in $H$ such that
\[
\left\|f \right\|_{L^1(\mathbb{R}; H)} + \left\|f \right\|_{H^\sigma(\mathbb{R}; H)} \leq K,
\]
we have
\[
\left\|f \right\|_{L^2(\mathbb{R}; H)} \leq C \max(1,K)^2 e^{2L\lambda_0} \lambda_0^{-1/2 - 2\sigma} \left|  \log \underset{|\lambda|\leq \lambda_0}{\sup} \left\| \hat{f}(\lambda)\right\|_H \right|^{-\frac{\sigma}{3(\sigma+1)}},
\]
where $\supp \, f \subset [-L, L]$.
\end{lem}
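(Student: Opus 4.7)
The plan is to transform both the target integral and the hypothesis \eqref{rremov_exp_tez} into contour integrals on a circle $\partial D_R\subset\mathbb{C}$ and exploit that $\Phi(\cdot,\eta)$ is holomorphic outside the support of $(A_t+iA_\theta)(\cdot,\eta)$. Fix $y\in\partial S^{n-1}_{>\beta^\prime}$ and pick $R>R_0$ such that $\Psi_y(\Omega)\cap\Pi_{y,\eta}\subset\subset D_{R_0}$ for every $\eta\in y^\perp\cap S^{n-1}$. From \eqref{phi_partia_uno} one has $(A_t+iA_\theta)=2i\partial_{\bar z}\Phi$ and $(A_t+iA_\theta)e^{i\Phi}=2\partial_{\bar z}(e^{i\Phi})$, so integration by parts against any holomorphic $a_0$ yields
\[
I(y,\eta):=\int_{B\cap\Pi_{y,\eta}}(A_t+iA_\theta)\,e^{i\lambda z}\,dz\,d\bar z=\int_{\partial D_R}\Phi(z,\eta)\,e^{i\lambda z}\,dz,
\]
\[
\int_{B\cap\Pi_{y,\eta}}(A_t+iA_\theta)\,e^{i\Phi}a_0\,dz\,d\bar z=\frac{1}{i}\int_{\partial D_R}e^{i\Phi(z,\eta)}\,a_0(z)\,dz.
\]
Since $\Phi(\cdot,\eta)\to 0$ at infinity and is holomorphic on $\mathbb{C}\setminus D_{R_0}$, on $\partial D_R$ it admits a uniformly convergent Laurent expansion $\Phi(z,\eta)=\sum_{k\geq 1}a_{-k}(\eta)z^{-k}$, whence $e^{i\Phi(z,\eta)}=1+\sum_{k\geq 1}b_{-k}(\eta)z^{-k}$, with each $b_{-k}(\eta)$ a polynomial in $a_{-1}(\eta),\ldots,a_{-k}(\eta)$ whose leading term is $ia_{-k}(\eta)$.

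The construction of the CGO solutions in Proposition \ref{CGO solutions_p} remains valid when $a_0$ is only required to be holomorphic in $z$ for each fixed $\eta$, so \eqref{rremov_exp_tez} applies to $\eta$-dependent test functions. Substituting $a_0(z,\eta)=\psi(\eta)z^k$ with $k\geq 0$ and $\psi\in H^1(y^\perp\cap S^{n-1})$, the residue identity $\int_{\partial D_R}e^{i\Phi(z,\eta)}z^k\,dz=2\pi i\,b_{-k-1}(\eta)$ together with $\|\psi z^k\|_{H^1(\Psi_y(B))}\lesssim (k+1)R_B^k\|\psi\|_{H^1(\eta)}$ yields, by duality,
\[
\|b_{-k-1}\|_{H^{-1}(y^\perp\cap S^{n-1})}\lesssim \varepsilon\,(k+1)\,R_B^{k},
\]
where $\varepsilon:=|\log\|\Lambda_1^\sharp-\Lambda_2^\sharp\||^{-1}$ and $R_B:=\sup_{z\in\Psi_y(B)}|z|$. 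On the other hand, the $C^{1+\sigma}$ regularity of $A$ and the smooth dependence of $\Phi$ on $\eta$ through $\eta\mapsto\gamma_{y,\eta}$ (combined with Lemma \ref{new_delta_bar_estimate}) give uniform bounds on $b_{-k-1}$ in $H^s(y^\perp\cap S^{n-1})$ for some fixed $s>(n-2)/2$, with constant at most $C_A R^{k+1}$. Interpolating between $H^{-1}$ and $H^s$, and using the Sobolev embedding $H^s\hookrightarrow L^\infty$ on the compact $(n-2)$-manifold, we obtain a pointwise bound
\[
\|b_{-k-1}\|_{L^\infty(y^\perp\cap S^{n-1})}\lesssim \varepsilon^{\theta}(k+1)^{\theta}R_B^{k\theta}R^{(k+1)(1-\theta)},
\]
with $\theta=\theta(n,s)\in(0,1)$ independent of $k$.

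For each fixed $\eta$, the function $\mathbb{F}_\eta(s):=e^{i\Phi(Re^{is},\eta)}-1$ on $S^1\cong\partial D_R$ has only strictly negative Fourier modes, with $\hat{\mathbb{F}}_\eta(-k)=b_{-k}(\eta)R^{-k}$. Choosing $R$ large enough that the geometric factor $R_B^{k\theta}R^{(k+1)(1-\theta)-k}$ stays bounded (indeed decays) in $k$, the previous step meets the hypothesis $|\hat{\mathbb{F}}_\eta(-k)|\lesssim \varepsilon^{\theta}(k+1)$ required by Lemma \ref{quantification_complex_argument}, with $m$ chosen inside $(1/2,1+\sigma)$ thanks to the $C^{1+\sigma}$ regularity of $\Phi$. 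The lemma then yields $\|\mathbb{F}_\eta\|_{H^\beta(S^1)}\lesssim \varepsilon^{\theta\kappa}$ for some $\kappa\in(0,1)$ and $\beta>1/2$, uniformly in $\eta$; Sobolev embedding gives $\|e^{i\Phi(\cdot,\eta)}-1\|_{L^\infty(\partial D_R)}\lesssim \varepsilon^{\theta\kappa}$. For $\varepsilon$ small, Lemma \ref{quantification_complex_argument_5} converts this into $\|\Phi(\cdot,\eta)\|_{L^\infty(\partial D_R)}\lesssim \varepsilon^{\theta\kappa}$ uniformly in $(y,\eta)$, so that
\[
|I(y,\eta)|\leq 2\pi R\,e^{\lambda R}\,\|\Phi(\cdot,\eta)\|_{L^\infty(\partial D_R)}\lesssim C(\lambda)\,\varepsilon^{\theta\kappa},
\]
which is \eqref{emov_sti_hi_2} for any $t_0\leq 10\theta\kappa$. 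The hard part will be the bookkeeping of the exponent through this chain of interpolations: the radius $R$ must simultaneously exceed $R_0$ (to validate the Laurent expansion), dominate $R_B$ strongly enough that the geometric factors in the second step do not spoil the linear-in-$k$ hypothesis of Lemma \ref{quantification_complex_argument}, and remain bounded (to keep the prefactor $e^{\lambda R}$ under control). The exponent $t_0/10$ reflects the compounded losses in the $H^{-1}(\eta)\to L^\infty(\eta)$ upgrade, the Fourier-to-$H^\beta$ extraction in Lemma \ref{quantification_complex_argument}, and the final $H^\beta(S^1)\to L^\infty(S^1)$ Sobolev embedding.
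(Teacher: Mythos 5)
Your proposal does not address the statement you were asked to prove. Lemma \ref{lemma_estension_caro_ruiz_DSF} is a quantitative unique continuation result for the Fourier transform: for an $H$-valued function $f$ supported in $[-L,L]$ with a priori $L^1$ and $H^\sigma$ bounds, smallness of $\sup_{|\lambda|\leq\lambda_0}\|\hat f(\lambda)\|_H$ on the tiny frequency interval $[-\lambda_0,\lambda_0]$ forces the logarithmic bound on the full $L^2$ norm, with the explicit constant $e^{2L\lambda_0}\lambda_0^{-1/2-2\sigma}$ and exponent $-\sigma/(3(\sigma+1))$. In the paper this lemma is not reproved at all; it is quoted verbatim from \cite[Lemma 4.1]{CDSFR1}, and it is the tool used in Sections \ref{sectio_four_st} and \ref{sta_electri_pot} to pass from information at small attenuations $|\lambda|\leq\lambda_0$ to all of $\mathbb{R}$ (this is where the extra logarithm in Theorems \ref{SMP} and \ref{SEP} comes from). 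What you have written instead is an argument for Theorem \ref{remov_esti_phi_1} in the Appendix --- the removal of the factor $e^{i\Phi}$ from estimate \eqref{rremov_exp_tez} via contour integrals on a circle, Laurent/Fourier coefficients of $e^{i\Phi}$, and Lemmas \ref{quantification_complex_argument} and \ref{quantification_complex_argument_5}. Nothing in your text engages with the hypotheses or conclusion of the stated lemma: there is no use of the compact support of $f$ to make $\hat f$ entire of exponential type, no use of the $H^\sigma$ bound to control high frequencies, and no mechanism producing the constants $e^{2L\lambda_0}\lambda_0^{-1/2-2\sigma}$ or the exponent $\sigma/(3(\sigma+1))$.

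A proof of the actual statement runs along entirely different lines: since $\supp f\subset[-L,L]$, the derivative bounds $\|\hat f^{(k)}(\lambda)\|_H\leq L^k\|f\|_{L^1(\mathbb{R};H)}$ let one propagate the smallness of $\hat f$ from $[-\lambda_0,\lambda_0]$ to a larger interval $[-\rho,\rho]$ (a quantitative analytic continuation, e.g.\ by Taylor expansion about points of the small interval, paying exponentially large constants in $\rho/\lambda_0$); one then splits $\|f\|_{L^2}^2=\int_{|\lambda|\leq\rho}\|\hat f\|_H^2\,d\lambda+\int_{|\lambda|>\rho}\|\hat f\|_H^2\,d\lambda$, bounds the tail by $\rho^{-2\sigma}K^2$ using $f\in H^\sigma(\mathbb{R};H)$, and optimizes the truncation parameter $\rho$ against $\varepsilon:=\sup_{|\lambda|\leq\lambda_0}\|\hat f(\lambda)\|_H$; the competition between the exponential loss in the continuation step and the polynomial gain $\rho^{-\sigma}$ is exactly what produces a negative power of $|\log\varepsilon|$. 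If you want to reconstruct the argument, that is the skeleton to follow (or consult \cite[Lemma 4.1]{CDSFR1} directly); the material you wrote belongs to the Appendix and cannot be repurposed here.
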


\section{Removing the exponential term}
This part of the Appendix is devoted to proving Theorem \ref{remov_esti_phi_1}. Recall that $\Pi_{y, \eta}$ stands for the plane generated by $(y, \eta)\in \partial_+ S(S^{n-1}_{>\beta^\prime})$, where $0<\beta^\prime<1$ is given by Lemma \ref{g_e_c_o_s}. When trying to remove the exponential term from \eqref{rremov_exp_tez} using geodesic coordinates, we have to control the family of quantities associated with the Lebesgue measure of the sections $\Omega\cap \Pi_{y, \eta}$ with $(y, \eta)\in \partial_+ S(S^{n-1}_{>\beta^\prime})$. However, since we do not a priori the shape of $\Omega$, some elements of the family $(\Omega\cap \Pi_{y, \eta})_{(y, \eta)\in \partial_+ S(S^{n-1}_{>\beta^\prime})}$ could degenerate in the sense that their Lebesgue measure can be zero. For this reason, to overcome this technical difficulty, we shall consider a larger open subset $B$ containing $\Omega$ and so that it still satisfies the geometric condition \eqref{geodesic_coordinates}. The set $B$ has to be chosen so that the family $(B\cap \Pi_{y, \eta})_{(y, \eta)\in \partial_+ S(S^{n-1}_{>\beta^\prime})}$ behaves well in the sense that it does not contain any degenerate element. Roughly speaking, it can be done by approximating the plane $x_n=\beta^\prime$ with balls in $\mathbb{R}^n$ of radius sufficiently large and lying in $\left\{ x_n>\beta^\prime/2\right\}$. In fact, one can prove that there exist $y_0\in \mathbb{R}^n$ and $R_0>0$ such that $B_{R_0}(y_0)$ (the ball of radius $R_0$ and centre $y_0$) satisfies
\begin{equation}\label{rad_bound_0}
\Omega\subset \subset B_{R_0}(y_0) \subset \left\{ x_n>\beta^\prime/2\right\}.
\end{equation}
From now on, we assume $B$ to be $B_{R_0}(y_0)$. Moreover, one can also prove that there exists $S\gg 1$ such that 
\begin{equation}\label{rad_bound}
S^{-1}\leq R_{y, \eta}:=\sqrt{\left \langle y, y_0 \right \rangle^2 + \left \langle \eta, y_0 \right \rangle^2+ R_0^2 - |y_0|^2}\leq S,
\end{equation}
for all $(y, \eta)\in \partial_+ S(S^{n-1}_{>\beta^\prime})$. Somehow $R_{y, \eta}$ is proportional to the diameter of the set $B\cap\Pi_{y, \eta}$, see \eqref{change_variables_newzxcq}. Thus, condition \eqref{rad_bound} implies the no existence of degenerate elements in the family $(B\cap \Pi_{y, \eta})_{(y, \eta)\in \partial_+ S(S^{n-1}_{>\beta^\prime})}$.
\begin{rem}
A straightforward computation shows that $y_0=(R_0+\beta^\prime)e_n$, $R_0=2{\beta^\prime}^{-1}$ and
\[
S=\max \left\{{\beta^\prime}^{-2} ({\beta^\prime}^{2}+3)^{-1}, 8{\beta^\prime}^{-2} +4+{\beta^\prime}^{2}\right\}.
\]
satisfy conditions \eqref{rad_bound_0}-\eqref{rad_bound}. To prove \eqref{rad_bound}, one can use the fact that if $(y, \eta)\in \partial_+ S(S^{n-1}_{>\beta^\prime})$ then, by definition, $y\cdot e_n>\beta^{\prime}$, $y\cdot \eta=0$ and $|\eta|=1$.
\end{rem}

Let $0<\beta<\beta^\prime<1$ be as in Lemma \ref{g_e_c_o_s}. Then there exist $T>0$ and $\epsilon\in (0, \pi)$ --- both independent of $(y, \eta)\in \partial_+ S(S^{n-1}_{>\beta^\prime})$ --- such that 
\begin{equation}\label{change_variables_newzxcq}
\begin{aligned}
B \cap \Pi_{y, \eta}&   = \left\{(t,\theta) \in (-T, T)\times (\epsilon, \pi-\epsilon): \right.\\
&\qquad \qquad \qquad \qquad  \left. |e^{t+i\theta}- (\left \langle y, y_0 \right \rangle+i \left \langle \eta, y_0 \right \rangle)|<  {R_{y, \eta}}\right\},\\
\partial(B \cap \Pi_{y, \eta})&= \left\{(t,\theta) \in (-T, T)\times (\epsilon, \pi-\epsilon): \right.\\
& \qquad \qquad \qquad \qquad \left. |e^{t+i\theta}- (\left \langle y, y_0 \right \rangle+i \left \langle \eta, y_0 \right \rangle)|=  {R_{y, \eta}}\right\}.
\end{aligned}
\end{equation} 
\begin{rem}\label{rem_identi_1_i}
We make the natural identification between the variables $(t, \theta)\in \mathbb{R}^2$ and $z=t+i\theta \in \mathbb{C}$. In this sense, we shall also see $B \cap \Pi_{y, \eta}$ and its boundary $\partial(B \cap \Pi_{y, \eta})$ as subsets of the complex plane with references $y$ and $\eta$, which in turn can be seen as $1$ and the imaginary unit $i$ in the complex plane.
\end{rem}

Consider $a_0(z, \eta)=\widetilde{a}_0(z, \eta)b(\eta)$ in \eqref{rremov_exp_tez} with $\partial_{\overline{z}}\,\widetilde{a}_0(z, \eta)=0$ and $b$ being any smooth function on $y^\perp\cap S^{n-1}$.

\begin{lem} \label{iner_until_infuzz}
Let $y\in \partial S^{n-1}_{>\beta^\prime}$ and $p> 1$. There exists a universal constant $C>0$ such that
\[
\begin{aligned}\label{iner_until_infuz}
&\left\|  \int_{B\cap \Pi_{y, \cdot}}  (A_t+iA_\theta)(z, \cdot)\,e^{i\Phi(z, \cdot)}\, \widetilde{a}_0(z, \cdot)\, dz\,d\overline{z} \right\|_{W^{-1, \frac{2p}{p+1}}(y^\perp\cap S^{n-1})}\\
&\qquad \leq C  \left | \log \left \| \Lambda_1^\sharp- \Lambda_2^\sharp \right \| \right |^{-1}\left\| \widetilde{a}_0\right\|_{W^{1, 2p}\left(y^\perp\cap S^{n-1}; \, H^1(B\cap\Pi_{y, \eta}) \right)},
\end{aligned}
\]
where
\begin{equation}\label{uniform_bounded_R}
\begin{aligned}
&\left\| \widetilde{a}_0\right\|^{2p}_{W^{1, 2p}\left(y^\perp\cap S^{n-1}; \, H^1(B\cap\Pi_{y, \eta}) \right)}\\
&:=  \int_{y^\perp\cap S^{n-1}} \left( \left\| \widetilde{a}_0 (\cdot, \eta )\right\|^{2p}_{L^2(B\cap \Pi_{y, \eta})} +  \left\| \nabla_{z, \eta}\, \widetilde{a}_0 (\cdot, \eta )\right\|^{2p}_{L^2(B\cap \Pi_{y, \eta})} \right) d\eta.
\end{aligned}
\end{equation}
\end{lem}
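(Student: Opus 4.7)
The plan is to deduce the lemma from Corollary \ref{cor:Phi_new_coordinates} by combining a duality argument with a Hölder decomposition in the transverse variable $\eta$. Given an arbitrary smooth test function $b$ on $y^\perp\cap S^{n-1}$, set $a_0(z,\eta):=\widetilde{a}_0(z,\eta)\,b(\eta)$; since $b$ does not depend on $z$ and $\partial_{\overline{z}}\widetilde{a}_0=0$, the function $a_0$ is admissible in \eqref{rremov_exp_tez}. Writing
\[
F(\eta):=\int_{B\cap\Pi_{y,\eta}}(A_t+iA_\theta)(z,\eta)\,e^{i\Phi(z,\eta)}\,\widetilde{a}_0(z,\eta)\,dz\,d\overline{z},
\]
Corollary \ref{cor:Phi_new_coordinates} applied to this choice of $a_0$ yields
\[
\left|\int_{y^\perp\cap S^{n-1}}b(\eta)\,F(\eta)\,d\eta\right|\lesssim\left|\log\|\Lambda_1^\sharp-\Lambda_2^\sharp\|\right|^{-1}\|\widetilde{a}_0\,b\|_{H^1(\Psi_y(B))}.
\]

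The next step is to separate $b$ from $\widetilde{a}_0$ in the $H^{1}$-norm of the product by Hölder's inequality in $\eta$. Since $\Psi_y(B)$ fibers as the union of the slices $(B\cap\Pi_{y,\eta})\times\{\eta\}$, expanding via the product rule produces three types of contributions, namely
$|b(\eta)|^{2}\|\widetilde{a}_0(\cdot,\eta)\|_{L^2}^{2}$, $|b(\eta)|^{2}\|\nabla_{z,\eta}\widetilde{a}_0(\cdot,\eta)\|_{L^2}^{2}$ and $|\nabla_\eta b(\eta)|^{2}\|\widetilde{a}_0(\cdot,\eta)\|_{L^2}^{2}$, all integrated against $d\eta$. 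In each term the factor built from $b$ (or $\nabla b$) is paired with a factor built from $\widetilde{a}_0$; applying Hölder with the conjugate exponents $p/(p-1)$ and $p$ sends the $b$-factor into $L^{2p/(p-1)}(y^\perp\cap S^{n-1})$ and the $\widetilde{a}_0$-factor into $L^{2p}(y^\perp\cap S^{n-1};\,L^2(B\cap\Pi_{y,\eta}))$, which yields
\[
\|\widetilde{a}_0\,b\|_{H^1(\Psi_y(B))}\lesssim \|b\|_{W^{1,\frac{2p}{p-1}}(y^\perp\cap S^{n-1})}\,\|\widetilde{a}_0\|_{W^{1,2p}(y^\perp\cap S^{n-1};\,H^1(B\cap\Pi_{y,\eta}))},
\]
with the right-hand side matching the norm introduced in \eqref{uniform_bounded_R}.

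Combining the two displays above produces, for every smooth $b$,
\[
\left|\int_{y^\perp\cap S^{n-1}}b(\eta)F(\eta)\,d\eta\right|\lesssim\left|\log\|\Lambda_1^\sharp-\Lambda_2^\sharp\|\right|^{-1}\|\widetilde{a}_0\|_{W^{1,2p}(\,\cdot\,;\,H^1(\,\cdot\,))}\,\|b\|_{W^{1,\frac{2p}{p-1}}}.
\]
Taking the supremum over all $b$ with $\|b\|_{W^{1,2p/(p-1)}}\leq 1$ and invoking the duality $(W^{1,2p/(p-1)})^*=W^{-1,2p/(p+1)}$ (valid since $(p-1)/(2p)+(p+1)/(2p)=1$) delivers the stated estimate.

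The only non-routine point is the careful execution of the Hölder split in $\eta$ when the sections $B\cap\Pi_{y,\eta}$ deform with $\eta$. The geometric identification \eqref{change_variables_newzxcq} together with the uniform bounds \eqref{rad_bound} on $R_{y,\eta}$, which are secured precisely by the choice $B=B_{R_0}(y_0)$ made above this lemma, guarantees that no section degenerates and that all constants produced by Hölder can be absorbed into a single $C$ independent of $y\in\partial S^{n-1}_{>\beta'}$.
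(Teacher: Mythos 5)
Your proposal is correct and follows essentially the same route as the paper: insert $a_0=\widetilde{a}_0\,b$ into estimate \eqref{rremov_exp_tez} of Corollary \ref{cor:Phi_new_coordinates}, separate the factors with H\"older's inequality in $\eta$ using the exponents $p$ and $p/(p-1)$ (including the analogous bounds for $\nabla_z a_0$ and $\nabla_\eta a_0$), and conclude by duality between $W^{1,2p/(p-1)}$ and $W^{-1,2p/(p+1)}$ on $y^\perp\cap S^{n-1}$. No substantive difference from the paper's argument.
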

\begin{proof}
H\"older's inequality applied to $p$ and $p/(p-1)$ implies
\[
\left\| a_0\right\|_{H^1(\Psi_y(B))} \lesssim \left\| \widetilde{a}_0\right\|_{W^{1, 2p}\left(y^\perp\cap S^{n-1}; \, H^1(B\cap\Pi_{y, \eta}) \right)}   \left\| b\right\|_{W^{1, \frac{2p}{p-1}}(y^\perp\cap S^{n-1})}.
\]
Indeed, since $1/p + (p-1)/p=1$, we have ($z:=t+i\theta$)
\begin{equation}\label{lp_s_n}
\begin{aligned}
\left\|a_0 \right\|^{2}_{L^2(\Psi_y(B))}=\int_{y^\perp \cap S^{n-1}} \left( \int_{B\cap \Pi_{y, \eta}} | \widetilde{a_0}(t, \theta, \eta)|^2 dtd\theta\right)|b(\eta)|^2 d\eta\\
\leq \left\| \int_{B\cap \Pi_{y, \cdot}} | \widetilde{a_0}(t, \theta, \cdot)|^2 dtd\theta\right\|_{L^p(y^\perp\cap S^{n-1})} \left\| b^2\right\|_{L^{\frac{p}{p-1}}(y^\perp\cap S^{n-1})},
\end{aligned}
\end{equation}
We now compute these norms.
\begin{align*}
\left\|  \int_{B\cap \Pi_{y, \cdot}} | \widetilde{a_0}(t, \theta, \cdot)|^2 dtd\theta\right\|_{L^p(y^\perp\cap S^{n-1})}^p& = \int_{y^\perp \cap S^{n-1}}\left\|\widetilde{a_0} (\cdot, \eta) \right\|^{2p}_{L^2(B\cap \Pi_{y, \eta})}d\eta\\
&=\left\| \widetilde{a}_0 \right\|_{L^{2p}(y^\perp \cap S^{n-1};\, L^2(B\cap\Pi_{y, \eta}))}^{2p}
\end{align*}
and
\[
\left\| b^2\right\|_{L^{p/(p-1)}(y^\perp\cap S^{n-1})}^{p/(p-1)} =  \left\| b\right\|_{L^{2p/(p-1)}(y^\perp\cap S^{n-1})}^{2p/(p-1)}.
\]
Combining these identities into \eqref{lp_s_n}, we get
\[
\left\|a_0 \right\|_{L^2(\Psi_y(B))}\leq \left\| \widetilde{a}_0 \right\|_{L^{2p}(y^\perp \cap S^{n-1};\, L^2(B\cap\Pi_{y, \eta}))}\left\| b\right\|_{L^{2p/(p-1)}(y^\perp\cap S^{n-1})}.
\]
We can continue in this fashion to get similar bounds for $\nabla_{z}a_0$ and $\nabla_{\eta}a_0$. The lemma follows by  inserting these estimates in \eqref{rremov_exp_tez} and by duality. 
\end{proof}

\begin{lem} \label{lem_interp_L_infty}  
Let $y\in \partial S^{n-1}_{>\beta^\prime}$. Let $\gamma\in (0,1/2)$ and $s>n$. Assume that $\widetilde{a}_0\in W^{1, s}(\Psi_y(B))$. Then 
\begin{align*}
& \left\|  \int_{B\cap \Pi_{y, \cdot}}  (A_t+iA_\theta)(z, \cdot)\,e^{i\Phi(z, \cdot)}\, \widetilde{a}_0(z, \cdot)\, dz\,d\overline{z} \right\|_{W^{\gamma, s}(y^\perp\cap S^{n-1})} \\
&\qquad \qquad \qquad\lesssim \left\| \widetilde{a}_0\right\|_{W^{1, s}\left(y^\perp\cap S^{n-1}; \, L^2(B\cap\Pi_{y, \eta}) \right)}.\\
\end{align*}
\end{lem}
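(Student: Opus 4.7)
\smallskip\noindent\textbf{Strategy.} My plan is to establish a stronger $W^{1,s}$ bound on the map $F\colon \eta\mapsto \int_{B\cap\Pi_{y,\eta}}(A_t+iA_\theta)(z,\eta)e^{i\Phi(z,\eta)}\widetilde{a}_0(z,\eta)\,dz\,d\overline{z}$ and then deduce the claimed $W^{\gamma,s}$ bound for $\gamma\in(0,1/2)$ by the continuous embedding $W^{1,s}\hookrightarrow W^{\gamma,s}$ on the compact manifold $y^\perp\cap S^{n-1}$, which holds for every $\gamma\in(0,1)$.

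\smallskip\noindent\textbf{Reduction to a fixed domain.} Because $A=\chi_\Omega(A_1-A_2)$ is supported in $\overline{\Omega}\subset\subset B$, and $\Psi_y(\overline{\Omega})$ is a compact subset of $(-T,T)\times(\epsilon,\pi-\epsilon)\times(y^\perp\cap S^{n-1}_{>0})$, there exists a fixed compact set $K\subset(-T,T)\times(\epsilon,\pi-\epsilon)$, independent of $\eta$, with $\supp(A_t+iA_\theta)(\cdot,\eta)\subset K$ for every $\eta\in y^\perp\cap S^{n-1}$. Integrating over $K$ (or any fixed open set containing it) in place of $B\cap\Pi_{y,\eta}$ does not alter $F(\eta)$, and crucially removes any moving-boundary contribution when one differentiates in $\eta$.

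\smallskip\noindent\textbf{Pointwise $L^\infty$ control on $\partial_\eta\Phi$.} From \eqref{phi_partia_uno} each $\Phi_j=-\tfrac{i}{2}\mathcal{C}(A_{j,t}+iA_{j,\theta})$, where $\mathcal{C}$ acts only in $z$. Since $A_j\in C^{1+\sigma}_c$, differentiation in $\eta_k$ commutes with $\mathcal{C}$ by dominated convergence (the kernel $(z-\xi)^{-1}$ is $\eta$-independent), yielding $\partial_{\eta_k}\Phi_j=-\tfrac{i}{2}\mathcal{C}[\partial_{\eta_k}(A_{j,t}+iA_{j,\theta})]$ pointwise. Lemma \ref{delta_estimate} applied to the bounded, compactly supported integrand $\partial_{\eta_k}(A_{j,t}+iA_{j,\theta})$ then gives $\partial_\eta\Phi\in L^\infty(\Psi_y(B))$ with norm controlled by $\|A\|_{C^{1+\sigma}}$. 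Combined with the $W^{1,\infty}(\mathbb{C})$ bound on $\Phi$ in $z$ from the same lemma (so $|e^{i\Phi}|\le C$), this makes $\partial_\eta e^{i\Phi}=i(\partial_\eta\Phi)e^{i\Phi}$ uniformly bounded on $K\times(y^\perp\cap S^{n-1})$.

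\smallskip\noindent\textbf{Bounds on $F$ and $\nabla_\eta F$.} Cauchy-Schwarz in $z$ on the fixed compact $K$, combined with the uniform bounds on $A_t+iA_\theta$ and $e^{i\Phi}$, gives the pointwise estimate $|F(\eta)|\lesssim\|\widetilde{a}_0(\cdot,\eta)\|_{L^2(B\cap\Pi_{y,\eta})}$. Differentiating under the integral sign (legitimate by the previous paragraph) produces three terms, involving respectively $\partial_{\eta_j}(A_t+iA_\theta)$ (bounded in $L^\infty$ because $A\in C^{1+\sigma}$), $(A_t+iA_\theta)\cdot i(\partial_{\eta_j}\Phi)e^{i\Phi}$ (bounded in $L^\infty$ by the previous step), and $(A_t+iA_\theta)e^{i\Phi}\partial_{\eta_j}\widetilde{a}_0$. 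Applying Cauchy-Schwarz to each yields
\[
|\nabla_\eta F(\eta)|\lesssim \|\widetilde{a}_0(\cdot,\eta)\|_{L^2(B\cap\Pi_{y,\eta})}+\|\nabla_\eta \widetilde{a}_0(\cdot,\eta)\|_{L^2(B\cap\Pi_{y,\eta})}.
\]
Taking $L^s_\eta$ norms gives the desired $W^{1,s}$ estimate on $F$, and hence the $W^{\gamma,s}$ estimate by Sobolev embedding. The only delicate point is the pointwise $L^\infty$ control of $\partial_\eta\Phi$, since $\Phi$ is a priori given only as a Cauchy transform; this is exactly what is achieved by commuting $\partial_\eta$ with the $z$-integral and invoking Lemma \ref{delta_estimate} on the smooth, compactly supported function $\partial_\eta A$.
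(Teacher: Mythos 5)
Your proposal is correct in substance but takes a genuinely different route from the paper. The paper never differentiates $F(\eta)=\int_{B\cap\Pi_{y,\eta}}(A_t+iA_\theta)e^{i\Phi}\widetilde{a}_0\,dz\,d\overline{z}$ in $\eta$: it estimates the Gagliardo seminorm of $W^{\gamma,s}(y^\perp\cap S^{n-1})$ directly, splitting $F(\eta_1)-F(\eta_2)$ into a moving-domain term controlled by the area of $(B\cap\Pi_{y,\eta_1})\,\triangle\,(B\cap\Pi_{y,\eta_2})\lesssim|\eta_1-\eta_2|$, a term with $\mathbb{A}(z,\eta_1)-\mathbb{A}(z,\eta_2)$, where $\mathbb{A}:=(A_t+iA_\theta)e^{i\Phi}$ is only given H\"older-$\tfrac12$ continuity in $\eta$ through Morrey's embedding $W^{1,\infty}\subset W^{1,2n}\hookrightarrow C^{0,1/2}$, and a term with $\widetilde a_0(z,\eta_1)-\widetilde a_0(z,\eta_2)$ handled by Morrey $W^{1,s}\hookrightarrow C^{0,1-n/s}$; the hypothesis $\gamma<1/2$ is precisely the price of that H\"older-$\tfrac12$ modulus. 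You instead prove the stronger $W^{1,s}$ bound by differentiating under the integral, which hinges on the $L^\infty$ control of $\partial_\eta\Phi$ obtained by commuting $\partial_\eta$ with the Cauchy transform — the same commutation the paper itself performs (in $L^2$) in Lemma \ref{new_delta_bar_estimate} — and then you conclude by the embedding $W^{1,s}\hookrightarrow W^{\gamma,s}$; this buys the full range $\gamma\in(0,1)$ and avoids the H\"older bookkeeping. Two points should be tightened in a written-up version, though neither is a real obstruction: (i) since $\widetilde a_0$ is only defined on $\Psi_y(B)$ and your fixed compact $K$ need not be contained in every slice $B\cap\Pi_{y,\eta}$, the ``fixed domain'' reduction should be phrased as saying that the integrand, extended by zero wherever the factor $A_t+iA_\theta$ vanishes, is well defined on $K$, and that its support lies at a uniform positive coordinate-distance from $\partial(B\cap\Pi_{y,\eta})$ because $\supp A\subset\overline\Omega\subset\subset B$, which is what kills the boundary contribution; (ii) differentiation under the integral sign with $\widetilde a_0$ merely in $W^{1,s}(\Psi_y(B))$ needs a short approximation argument (smooth $\widetilde a_0^{(k)}\to\widetilde a_0$ in $W^{1,s}$, the estimate passing to the limit since, by H\"older on the uniformly bounded slices, the mixed norm $W^{1,s}(y^\perp\cap S^{n-1};L^2(B\cap\Pi_{y,\eta}))$ is controlled by $\|\cdot\|_{W^{1,s}(\Psi_y(B))}$).
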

\begin{proof}
We start by denoting
\[
\mathbb{A}(z, \eta):=(A_t+iA_\theta)(z, \eta)e^{i\Phi(z, \eta)}, \quad \chi_{\eta}(z):=\chi_{B\cap \Pi_{y, \eta}}(z) 
\]
and
\[
F(\eta)= \int_{\mathbb{C}} \chi_{\eta}(z)\, \mathbb{A}(z, \eta)\, \widetilde{a}_0(z, \eta)\, dz\,d\overline{z}.
\]
Since $y^\perp\cap S^{n-1}$ can be identified with $S^{n-2}$, we have
\begin{equation}\label{F_C_gamma}
\begin{aligned}
&\left\| F \right\|^s_{W^{\gamma, s}(y^\perp\cap S^{n-1})}= \int_{y^\perp\cap S^{n-1}}\left| F(\eta)\right|^sd\eta\\
&\qquad \quad \quad+  \int_{y^\perp\cap S^{n-1}} \int_{y^\perp\cap S^{n-1}}\dfrac{|F(\eta_1)- F(\eta_2)|^s}{|\eta_1-\eta_2|^{n-2+\gamma s}}d\eta_1d\eta_2 := I_1+I_2.
\end{aligned}
\end{equation}
Cauchy-Schwarz's inequality yields
\begin{align*}
I_1&\leq \int_{y^\perp\cap S^{n-1}}\left( \int_{B\cap \Pi_{y, \eta}} | \mathbb{A}(z, \eta)\,\widetilde{a}_0(z, \eta)|dzd\overline{z}\right)^sd\eta\\
& \leq \int_{y^\perp\cap S^{n-1}}\left\| \mathbb{A}(\cdot, \eta) \right\|^s_{L^2(B\cap \Pi_{y, \eta})} \left\|\widetilde{a}_0(\cdot, \eta) \right\|^s_{L^{2}(B\cap \Pi_{y, \eta})}d\eta\\
& \leq \left\|\mathbb{A}\right\|^s_{L^\infty(y^\perp \cap S^{n-1};\, L^2(B\cap\Pi_{y, \eta}))}\left\| \widetilde{a}_0\right\|^s_{L^s(y^\perp \cap S^{n-1};\, L^2(B\cap\Pi_{y, \eta}))}.
\end{align*}
Let now $\eta_1, \eta_2\in y^\perp\cap S^{n-1}$. From the identity 
\begin{align*}
F(\eta_1)-F(\eta_2)&= \int_{\mathbb{C}}(\chi_{\eta_1}- \chi_{\eta_2})(z) \mathbb{A}(z, \eta_1)\widetilde{a}_0(z, \eta_1)dzd\overline{z}\\
&\quad + \int_{\mathbb{C}} \chi_{\eta_2} (z)(\mathbb{A}(z, \eta_1)- \mathbb{A}(z, \eta_2))\widetilde{a}_0(z, \eta_2) dzd\overline{z}\\
& \quad  +\int_{\mathbb{C}}\chi_{\eta_2} (z)  \mathbb{A}(z, \eta_1)(\widetilde{a}_0(z, \eta_1)-\widetilde{a}_0(z, \eta_2)  ) dzd\overline{z}\\
&\quad  := I_{21}+ I_{22}+ I_{23}
\end{align*}
we deduce
\[
I_2=\int_{y^\perp \cap S^{n-1}}\int_{y^\perp \cap S^{n-1}}\dfrac{|I_{21}+I_{22}+ I_{23}|^s}{|\eta_1-\eta_2|^{n-2+\gamma s}}d\eta_1d\eta_2.
\]
Since the  area of the set $B\cap \Pi_{y, \eta_1}\setminus B\cap \Pi_{y, \eta_2}$ is upper bounded proportionally to  $|\eta_1-\eta_2|$, we get 
\begin{align*}
|I_{21}|&=\left| \int_{B\cap \Pi_{y, \eta_1}\setminus B\cap \Pi_{y, \eta_2}}\mathbb{A}(z, \eta_1) \widetilde{a}_0(z, \eta_1)dzd\overline{z} \right|\\
& \leq \left\|\mathbb{A}(\cdot, \eta_1) \right\|_{L^2(B\cap \Pi_{y, \eta_1}\setminus B\cap \Pi_{y, \eta_2})}\left\| \widetilde{a}_0(\cdot, \eta_1)\right\|_{L^2(B\cap \Pi_{y, \eta_1}\setminus B\cap \Pi_{y, \eta_2})}\\
& \lesssim  \left\| \mathbb{A} \right\|_{L^\infty(\Psi_y(B))}|\eta_1-\eta_2|\left\|\widetilde{a}_0(\cdot, \eta_1) \right\|_{L^{2}(B\cap \Pi_{y, \eta_1})}.
\end{align*}
Since  $n-2 + \gamma s-s <n-2$ for all $\gamma\in(0,1)$, we deduce
\[
\int_{y^\perp \cap S^{n-1}}\int_{y^\perp \cap S^{n-1}}\dfrac{|I_{21}|^s}{|\eta_1-\eta_2|^{n-2+\gamma s}}d\eta_1d\eta_2\lesssim \left\| \widetilde{a}_0\right\|^s_{L^s(y^\perp \cap S^{n-1};\, L^2(B\cap\Pi_{y, \eta}))}.
\]
Now we analize $I_{22}$. Note that  $\mathbb{A}\in W^{1, 2n}_c(\Psi_y(B))$ since $\mathbb{A}\in W^{1, \infty}_c(\Psi_y(B))$. Hence, Morrey's inequality implies
\[
\left\| \mathbb{A} \right\|_{C^{0, \frac{1}{2}}(\Psi_y(B))}\lesssim \left\| \mathbb{A} \right\|_{W^{1, \infty}(\Psi_y(B))}
\]
where the implicit constant depends on $n$ and $B$. In particular, we have
\[
|\mathbb{A}(z, \eta_1)-\mathbb{A}(z, \eta_2)| \lesssim  \left\| \mathbb{A} \right\|_{W^{1, \infty}(\Psi_y(B))} |\eta_1- \eta_2|^{1/2}
\]
for all $\eta_1, \eta_2 \in y^\perp \cap S^{n-1}$ and all $z\in \mathbb{C}$. Combining all these facts, we deduce 
\[
|I_{22}| \lesssim \left\| \mathbb{A} \right\|_{W^{1,\infty}(\Psi_y(B))}|\eta_1-\eta_2|^{1/2}\left\|\widetilde{a}_0(\cdot, \eta_2) \right\|_{L^{2}(B\cap \Pi_{y, \eta_2})}
\]
and since $n-2 + \gamma s-s/2 <n-2$, we get
\[
\int_{y^\perp \cap S^{n-1}}\int_{y^\perp \cap S^{n-1}}\dfrac{|I_{22}|^s}{|\eta_1-\eta_2|^{n-2+\gamma s}}d\eta_1d\eta_2\lesssim \left\| \widetilde{a}_0\right\|^s_{L^s(y^\perp \cap S^{n-1};\, L^2(B\cap\Pi_{y, \eta}))}.
\]

Similar computations give the estimate for $I_{23}$. Morrey's inequality ensures that
\[
\left\| \widetilde{a}_0\right\|_{C^{0, 1- n/s}((\Psi_y(B))} \lesssim \left\| \widetilde{a}_0\right\|_{W^{1, s}((\Psi_y(B))},
\]
where the implicit constant is uniformly bounded depending only on $n$ and $B$. Hence
\[
\int_{y^\perp \cap S^{n-1}}\int_{y^\perp \cap S^{n-1}}\dfrac{|I_{23}|^s}{|\eta_1-\eta_2|^{n-2+\gamma s}}d\eta_1d\eta_2\lesssim\left\| \widetilde{a}_0\right\|_{W^{1, s}\left(y^\perp\cap S^{n-1}; \, L^2(B\cap\Pi_{y, \eta}) \right)}.
\]
We conclude the proof by combining all the previous estimates into \eqref{F_C_gamma}.
\end{proof}

We are now able to interpolate estimates from Lemma \ref{iner_until_infuzz}  and Lemma \ref{lem_interp_L_infty} until $L^\infty(y^\perp \cap S^{n-1})$,  by choosing suitable parameters $p, \gamma$ and $s=2p$. Hence there exists $t_0\in (0,1)$ such that 
\begin{equation}
\begin{aligned}\label{iner_until_infuzx}
&\underset{\eta\in y^\perp \cap S^{n-1}}{\sup}\left|  \int_{B\cap \Pi_{y, \eta}}  (A_t+iA_\theta)(z, \eta)\,e^{i\Phi(z, \eta)}\, \widetilde{a}_0(z, \eta)\, dz\,d\overline{z}\right|         \\
&\quad= \left\|  \int_{B\cap \Pi_{y, \cdot}}  (A_t+iA_\theta)(z, \cdot)\,e^{i\Phi(z, \cdot)}\, \widetilde{a}_0(z, \cdot)\, dz\,d\overline{z} \right\|_{L^{\infty}(y^\perp\cap S^{n-1})}\\
&\quad \lesssim  \left | \log \left \| \Lambda_1^\sharp- \Lambda_2^\sharp \right \| \right |^{-t_0}\left\| \widetilde{a}_0\right\|_{W^{1, 2p}\left(y^\perp\cap S^{n-1}; \, H^1(B\cap\Pi_{y, \eta}) \right)}. 
\end{aligned}
\end{equation}

Before continuing the proof, it will be useful to introduce a change of coordinates to work into the unit ball $B_1(0)$ and its boundary $S^1$ in the complex plane instead of  $B_{R_0}(y_0)\cap \Pi_{y, \eta}$ and $\partial(B_{R_0}(y_0)\cap \Pi_{y, \eta})$. By Lemma \ref{g_e_c_o_s}, Remark \ref{rem_identi_1_i} and \eqref{rad_bound}-\eqref{change_variables_newzxcq}, the following map is well defined
\begin{equation}
\begin{matrix}
\mathcal{T}_{y, \eta}:&\overline{B_{R_0}(y_0)\cap \Pi_{y, \eta}} &\rightarrow&\overline{B_1(0)}\subset \mathbb{C} \\ 
 &z& \mapsto&  \left(e^z-(\left \langle y, y_0 \right \rangle+i \left \langle \eta, y_0 \right) \right)/R_{y, \eta}. 
\end{matrix}
\end{equation}
Moreover, one has
\[
\mathcal{T}_{y, \eta}\left(\partial(B_{R_0}(y_0)\cap \Pi_{y, \eta})\right)= S^1\subset \mathbb{C}.
\]
We now set
\begin{equation}\label{nexcq_ delta_zeta}
\widetilde{\Phi}(\widetilde{z}, \eta)= \Phi(\mathcal{T}_{y, \eta}(\widetilde{z}), \eta),\quad \widetilde{\mathbb{A}}(\widetilde{z}, \eta):= (A_t + i A_\theta)\left( \mathcal{T}^{-1}_{y, \eta}(\widetilde{z}), \eta\right).
\end{equation}
Taking into account \eqref{phi_partia_uno}, a direct application of the chain rule gives us the next equation for each $\eta\in y^{\perp}\cap S^{n-1}$:
\begin{equation}\label{new_anszastew_1}
\partial_{\overline{\widetilde{z}}}\, \widetilde{\Phi}(\cdot, \eta)+ \frac{i}{2} R_{y, \eta}\left(  \overline{\widetilde{z}} \,  {R_{y, \eta}}+ \left \langle y, y_0 \right \rangle-i \left \langle \eta, y_0 \right \rangle \right)^{-1} \widetilde{\mathbb{A}}(\cdot, \eta)=0.
\end{equation}
We consider this equation in the whole complex plane by extending $\widetilde{\mathbb{A}}(\cdot, \eta)$ by zero outside  $B_1(0)$. This extension, still denoted by $\widetilde{\mathbb{A}}(\cdot, \eta)$, belongs to $C^{1+\sigma}_c(\mathbb{C})$. Applying the second estimate from Lemma \ref{delta_estimate} with $\gamma=\sigma$ and $\tilde p=5\sigma^{-1}$, we deduce that $\widetilde{\Phi}(\cdot, \eta)\in C^{2+3\sigma/5}(\mathbb{C})$. Hence its restrictions to $S^1$, denoted by $(\widetilde{\Phi}(\cdot, \eta))|_{S^1}$, also belongs to $C^{2+3\sigma/5}(S^1)$. Furthermore, by Lemma \ref{new_delta_bar_estimate} with $R=1$, one gets
\begin{equation}\label{omh_li}
|| \widetilde{\Phi}||_{W^{1, \infty}(S^{n-2}; H^1(B_1(0))} \lesssim || \widetilde{\mathbb{A}} ||_{W^{1, \infty}(\mathbb{C}\times S^{n-2})} <\infty,
\end{equation}
where the norm on the  left-hand side is defined in Lemma \ref{new_delta_bar_estimate}. Consider $\widetilde{z}:=\mathcal{T}_{y, \eta}(z)$. Stoke's theorem with \eqref{phi_partia_uno} yield
\begin{equation}\label{new_Phi}
\begin{aligned}
& \int_{B\cap \Pi_{y, \eta}}  (A_t+iA_\theta)(z, \eta)\,e^{i\Phi(z, \eta)}\, \widetilde{a}_0(z, \eta)\, dz\,d\overline{z}\\
& = \int_{\partial( B_{R_0}(y_0)\cap \Pi_{y, \eta})}e^{i\Phi(z, \eta)}\, \widetilde{a}_0(z, \eta)\, dz\\
&= \int_{S^1} e^{i\widetilde{\Phi}(\widetilde{z}, \eta)} {R_{y, \eta}} \left( \widetilde{z} \,  {R_{y, \eta}}+ \left \langle y, y_0 \right \rangle+i \left \langle \eta, y_0 \right \rangle\right)^{-1}    \widetilde{a}_0\left( \mathcal{T}^{-1}_{y, \eta}(\widetilde{z}), \eta\right)\, d\widetilde{z}.
\end{aligned}
\end{equation}
This identity will provide us suitable bounds for $e^{i\widetilde{\Phi}}$. Indeed, consider  $\widetilde{a}_0$ as the family $\widetilde{a}_{0, k}$ with $k\in \mathbb{Z}^+$ defined by
\[
\widetilde{a}_{0, k} (z, \eta)= e^z (e^z- (\left \langle y, y_0 \right \rangle+i \left \langle \eta, y_0 \right \rangle))^k  {R_{y, \eta}^{-k-1}}
\]
so that the term accompanied $e^{i\widetilde{\Phi}(\widetilde{z}, \eta)}$ in the last line of \eqref{new_Phi} will be  ${\widetilde{z}}^{\,k}$. In consequence, we have 
\[
 \int_{B\cap \Pi_{y, \eta}}  (A_t+iA_\theta)(z, \eta)\,e^{i\Phi(z, \eta)}\, \widetilde{a}_{0, k}(z, \eta)\, dz\,d\overline{z}= \int_{S^1} e^{i\widetilde{\Phi}(\widetilde{z}, \eta)} \, \widetilde{z}^{\,k}\, d\widetilde{z}.
\]
By \eqref{uniform_bounded_R},  \eqref{iner_until_infuzx} and \eqref{new_Phi}, and since $B_{R_0}(y_0)\cap \Pi_{y, \eta}$ is a bounded set, we get
\[
\left|\int_{S^1} e^{i\widetilde{\Phi}(\widetilde{z}, \eta)} \, \widetilde{z}^{\, k}\, d\widetilde{z}\right|\leq C  \left | \log \left \| \Lambda_1^\sharp- \Lambda_2^\sharp \right \| \right |^{-t_0}k, \quad k\in \mathbb{Z}^+.
\]
Consider the family on $S^1$
\[
\mathbb{F}(\cdot, \eta)=e^{i\widetilde{\Phi}(\cdot, \eta)}, \quad \eta\in y^{\perp}\cap S^{n-1}.
\]
 Because of the discussion just after \eqref{new_anszastew_1}, $\mathbb{F}(\cdot, \eta) \in C^{2+3\sigma/5}(S^1)$ for all $\eta\in y^\perp\cap S^{n-1}$. The previous estimate implies 
\[
|\widehat{\mathbb{F}} (-k, \eta)|\leq C \left | \log \left \| \Lambda_1^\sharp- \Lambda_2^\sharp \right \| \right |^{-t_0} k, \quad k\in \mathbb{Z}^+,
\]
By abuse of notation, $\widehat{\mathbb{F}} (-k, \eta)$ stands for the Fourier coefficient of $\mathbb{F}$ at $-k$ with respect to the first variable and for a fixed $\eta$, see \eqref{fourier_coeff_k}. Combining Remark  \ref{Dini_condition} with Lemma \ref{quantification_complex_argument} applied to $m=2+3\sigma/5$ and $\beta=(\sigma+3)/2$, we deduce the following pointwise representation of $\mathbb{F}$ in $S^1$: 
\[
\mathbb{F}(\cdot, \eta)= F(\cdot, \eta) + \widetilde{\mathbb{F}}(\cdot, \eta),
\]
with
\begin{equation}\label{f_eta_removed_vf}
\left\|\widetilde{\mathbb{F}} (\cdot, \eta) \right\|_{C^{1+\sigma/2}(S^1)}  \lesssim \left\|\widetilde{\mathbb{F}} (\cdot, \eta) \right\|_{H^{(\sigma+3)/2}(S^1)} \lesssim  \left | \log \left \| \Lambda_1^\sharp- \Lambda_2^\sharp \right \| \right |^{-\frac{4\sigma t_0}{3(\sigma+5)}}.
\end{equation}
Here we have used the fact the  $H^{(\sigma+3)/2}(S^1)$ is compactly embedded in $C^{1+\sigma/2}(S^1)$. In addition, the function $F(\cdot, \eta): = (\mathbb{F}  - \widetilde{\mathbb{F}})(\cdot, \eta)$ can be extended holomorphically into $B_1(0)$ (we still denote such an extension by $F(\cdot, \eta)$) and satisfies
\begin{equation}\label{f_eta_removed_vfzi}
\left\|F (, \eta)\right\|_{C^{1+\sigma/2}(S^1)}\lesssim \left\| F (\cdot, \eta)\right\|_{H^{(\sigma+3)/2}(S^1)} \lesssim  \left\| \mathbb{F} (\cdot, \eta)\right\|_{H^{(\sigma+3)/2}(S^1)} <\infty 
\end{equation}
This estimate is uniform in the variable $\eta\in y^{\perp}\cap S^{n-1}$ due to \eqref{omh_li}.\\

 Next, we claim that the extension of $F(\cdot, \eta)$ does not vanish in $B_1(0)$. Thanks to the maximum principle for holomorphic functions, it is enough to prove the non-vanishing property on $S^1$.

%
\begin{lem}\label{lemma_bun_acot}
There exist universal constants $C_1, C_2>0$ such that 
\[
|F(\widetilde{z}, \eta)| \geq C_1, \; |\widetilde{z}|=1,\, \eta\in y^\perp\cap S^{n-1},
\]
whenever $|| \Lambda_1^\sharp- \Lambda_2^\sharp ||\leq C_2$.
\end{lem}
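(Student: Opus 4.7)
The plan is to decompose $F(\cdot,\eta) = \mathbb{F}(\cdot,\eta) - \widetilde{\mathbb{F}}(\cdot,\eta)$ and apply the reverse triangle inequality. It suffices to establish a universal positive lower bound on $|\mathbb{F}|$ on $S^1$, independent of $(y,\eta)$, while exploiting that $|\widetilde{\mathbb{F}}|$ can be made arbitrarily small by choosing $C_2$ small, thanks to \eqref{f_eta_removed_vf}.

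For the lower bound on $|\mathbb{F}|$, I would use that $\mathbb{F}(\widetilde{z},\eta) = e^{i\widetilde{\Phi}(\widetilde{z},\eta)}$, so $|\mathbb{F}(\widetilde{z},\eta)| = e^{-\mathrm{Im}\,\widetilde{\Phi}(\widetilde{z},\eta)}$; thus a uniform $L^\infty$ upper bound on $\widetilde{\Phi}$ immediately yields a uniform positive lower bound on $|\mathbb{F}|$. Recall from \eqref{nexcq_ delta_zeta} that $\widetilde{\Phi}(\widetilde{z},\eta) = \Phi(\mathcal{T}_{y,\eta}^{-1}(\widetilde{z}),\eta)$ where $\Phi = \Phi_1 + \overline{\Phi}_2$ is constructed via the Cauchy transform of the potentials in Remark~\ref{form_a_j} and Corollary~\ref{cor:Phi_new_coordinates}. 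Two applications of the first part of Lemma~\ref{delta_estimate} (with $k=0$), or equivalently estimate \eqref{sfsfsdxb_f1}, yield
\[
\|\widetilde{\Phi}(\cdot,\eta)\|_{L^\infty(\overline{B_1(0)})} \;\leq\; \|\Phi(\cdot,\eta)\|_{L^\infty(\Psi_y(B))} \;\lesssim\; \|A_1^{ext}\|_{L^\infty} + \|A_2^{ext}\|_{L^\infty} \;\lesssim\; M,
\]
with implicit constant universal in view of \eqref{bounded_extension} and the a priori class $\mathscr{A}(\Omega,M,\sigma)$. Hence there is a universal constant $C_1>0$ such that
\[
|\mathbb{F}(\widetilde{z},\eta)| \;\geq\; e^{-CM} \;=:\; 2C_1 \qquad \text{for all } \widetilde{z}\in S^1,\ \eta\in y^\perp\cap S^{n-1}.
\]

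Combining this with \eqref{f_eta_removed_vf} and the reverse triangle inequality, one obtains on $S^1$
\[
|F(\widetilde{z},\eta)| \;\geq\; |\mathbb{F}(\widetilde{z},\eta)| - |\widetilde{\mathbb{F}}(\widetilde{z},\eta)| \;\geq\; 2C_1 - C'\,\bigl|\log\|\Lambda_1^\sharp-\Lambda_2^\sharp\|\bigr|^{-\frac{4\sigma t_0}{3(\sigma+5)}},
\]
for some universal $C'>0$. Choosing $C_2>0$ sufficiently small that $C'|\log C_2|^{-4\sigma t_0/(3(\sigma+5))}\leq C_1$, the conclusion $|F(\widetilde{z},\eta)|\geq C_1$ follows whenever $\|\Lambda_1^\sharp-\Lambda_2^\sharp\|\leq C_2$, uniformly in $y$ and $\eta$.

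No step here is genuinely hard: the argument is a soft combination of (i) the universal $L^\infty$ control on the phase furnished by the Cauchy-transform construction, and (ii) the logarithmic smallness of the anti-holomorphic remainder from \eqref{f_eta_removed_vf}. The only point requiring slight care is to make sure that all implicit constants in the bound on $\|\widetilde{\Phi}\|_{L^\infty}$ are independent of $(y,\eta)\in \partial_+S(S^{n-1}_{>\beta'})$, which is guaranteed by the uniform two-sided bound \eqref{rad_bound} on $R_{y,\eta}$ and by the fact that the weight $(\overline{\widetilde z}R_{y,\eta}+\langle y,y_0\rangle-i\langle\eta,y_0\rangle)^{-1}$ appearing in \eqref{new_anszastew_1} is uniformly bounded on $B_1(0)$.
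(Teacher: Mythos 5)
Your argument is correct and is essentially the paper's own proof: both rest on the reverse triangle inequality for $F=\mathbb{F}-\widetilde{\mathbb{F}}$, a uniform lower bound $|\mathbb{F}|=|e^{i\widetilde{\Phi}}|\geq e^{-C_0}$ coming from the $L^\infty$ control of $\widetilde{\Phi}$ via the Cauchy transform (Lemma \ref{delta_estimate}), and the smallness of $\widetilde{\mathbb{F}}$ from \eqref{f_eta_removed_vf} once $\|\Lambda_1^\sharp-\Lambda_2^\sharp\|\leq C_2$ with $C_2$ chosen accordingly. The only cosmetic difference is that you bound $\|\widetilde{\Phi}\|_{L^\infty}$ by pulling back to $\Phi$ and invoking \eqref{sfsfsdxb_f1}, whereas the paper cites Lemma \ref{delta_estimate} applied to \eqref{new_anszastew_1}; these are the same estimate.
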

\begin{proof}
We set $C_0:=|| \widetilde{\Phi} ||_{L^{\infty}(\mathbb{C}: L^{\infty}(y^\perp \cap S^{n-1}))}$, which is finite by using Lemma \ref{delta_estimate} in \eqref{new_anszastew_1}. Consider $\eta\in y^\perp\cap S^{n-1}$ and $|\widetilde{z}|=1$. By \eqref{f_eta_removed_vf}, we get
\[
|\widetilde{\mathbb{F}}(\widetilde{z}, \eta)|\leq C \left | \log \left \| \Lambda_1^\sharp- \Lambda_2^\sharp \right \| \right |^{-\frac{4\sigma t_0}{3(\sigma+5)}}\leq \frac{1}{2} e^{-C_0}\leq \frac{1}{2} |\mathbb{F}(\widetilde{z}, \eta)|,
\]
where the intermediate inequality holds if, for instance
\[
|| \Lambda_1^\sharp- \Lambda_2^\sharp ||\leq e^{-\left(2Ce^{C_0} \right)^{\frac{3(\sigma+5)}{4\sigma t_0}}}:=C_2.
\]
We end the proof by noting that
\begin{align*}
|F(\widetilde{z}, \eta)|= | (\mathbb{F}  - \widetilde{\mathbb{F}})(\widetilde{z}, \eta)|\geq |\mathbb{F}(\widetilde{z}, \eta)| - |\widetilde{\mathbb{F}}(\widetilde{z}, \eta)|\\
\geq \frac{1}{2}|\mathbb{F}(\widetilde{z}, \eta) | 
 \geq \frac{1}{2} e^{-C_0}:=C_1.
\end{align*}
\end{proof}
As an immediate consequence, the logarithm of the extension of $F(\cdot, \eta)$ is well defined in $B_1(0)$. Let $H(\cdot, \eta)$ be the holomorphic function in $B_1(0)$ and continuous in $\overline{B_1(0)}$ so that $F(\cdot, \eta)= e^{H(\cdot, \eta)}$. Combining the identity
\[
e^{i \widetilde{\Phi}(\widetilde{z}, \eta)-H(\widetilde{z}, \eta)}= 1+ \widetilde{\mathbb{F}}(\widetilde{z}, \eta)/(e^{i \widetilde{\Phi}(\widetilde{z}, \eta)}- \widetilde{\mathbb{F}}(\widetilde{z}, \eta)), \; |\widetilde{z}|=1, \, \eta\in y^\perp\cap S^{n-1}
\]
with \eqref{f_eta_removed_vf}-\eqref{f_eta_removed_vfzi} and using Lemma \ref{lemma_bun_acot}, we deduce
\begin{equation}\label{Z1x1}
\begin{aligned}
&| \widetilde{\mathbb{F}}(\widetilde{z}, \eta)/(e^{i \widetilde{\Phi}(\widetilde{z}, \eta)}- \widetilde{\mathbb{F}}(\widetilde{z}, \eta))|&\\
&\lesssim  \left | \log \left \| \Lambda_1^\sharp- \Lambda_2^\sharp \right \| \right |^{-\frac{4\sigma t_0}{3(\sigma+5)}}, \; |\widetilde{z}|=1, \, \eta\in y^\perp\cap S^{n-1}.&
\end{aligned}
\end{equation}
Hence, Lemma \ref{quantification_complex_argument_5} implies 
\[
\begin{aligned}
&|i \widetilde{\Phi}(\widetilde{z}, \eta)-H(\widetilde{z}, \eta)|&\\
&\lesssim  2 \left | \log \left \| \Lambda_1^\sharp- \Lambda_2^\sharp \right \| \right |^{-\frac{4\sigma t_0}{3(\sigma+5)}},\; |\widetilde{z}|=1, \, \eta\in y^\perp\cap S^{n-1}.&
\end{aligned}
\]
We now choose $\widetilde{a_0}$ (with $\widetilde{z}\in B_1(0)$ and $\eta\in y^\perp\cap S^{n-1}$) in \eqref{new_Phi} as
\begin{align*}
\widetilde{a_0}(\mathcal{T}_{y, \eta}^{-1}(\widetilde{z}), \eta)= H(\widetilde{z}, \eta)\, e^{-H{(\widetilde{z}, \eta)}} \widetilde{b_0}(\mathcal{T}_{y, \eta}^{-1}(\widetilde{z}), \eta), \quad \partial_{\overline{\widetilde{z}}} \, \widetilde{b_0}(\mathcal{T}_{y, \eta}^{-1}(\widetilde{z}), \eta)=0.
\end{align*}
Combining \eqref{nexcq_ delta_zeta}-\eqref{new_anszastew_1} and Stoke's theorem, we get 
\begin{equation}\label{I_II_III}
\begin{aligned}
& \int_{B\cap \Pi_{y, \eta}}  (A_t+iA_\theta)(z, \eta) \widetilde{b}_0(z, \eta)\, dz\,d\overline{z}\\
&= \frac{1}{2} \int_{|\widetilde{z}|\leq 1} R_{y, \eta}^2  | \widetilde{z} \,  {R_{y, \eta}}+ \left \langle y, y_0 \right \rangle+i \left \langle \eta, y_0 \right \rangle|^{-2} \widetilde{\mathbb{A}}(\widetilde{z}, \eta) \widetilde{b}_0(z, \eta)\, d\widetilde{z}\,d\overline{\widetilde{z}}\\
&=i \int_{|\widetilde{z}|=1} {R_{y, \eta}} \left( \widetilde{z} \,  {R_{y, \eta}}+ \left \langle y, y_0 \right \rangle+i \left \langle \eta, y_0 \right \rangle\right)^{-1} \widetilde{\Phi}(\widetilde{z}, \eta) \widetilde{b}_0(z, \eta)d\widetilde{z}\\
&:= I + II + III,
\end{aligned}
\end{equation}
where
\begin{align*}
I&=  \int_{|\widetilde{z}|=1} e^{i\widetilde{\Phi}(\widetilde{z}, \eta)} {R_{y, \eta}} \left( \widetilde{z} \,  {R_{y, \eta}}+ \left \langle y, y_0 \right \rangle+i \left \langle \eta, y_0 \right \rangle\right)^{-1}\\
&\qquad \qquad\qquad \qquad \qquad \qquad  \times  H(\widetilde{z}, \eta) e^{-H(\widetilde{z}, \eta) }  \widetilde{b}_0\left( \mathcal{T}^{-1}_{y, \eta}(\widetilde{z}), \eta\right)\, d\widetilde{z}  \\
II&= -  \int_{|\widetilde{z}|=1}  {R_{y, \eta}} \left( \widetilde{z} \,  {R_{y, \eta}}+ \left \langle y, y_0 \right \rangle+i \left \langle \eta, y_0 \right \rangle\right)^{-1} \\
&\qquad \qquad\qquad \qquad \qquad \qquad  \times  (H(\widetilde{z}, \eta) -i\widetilde{\Phi}(\widetilde{z}, \eta) ) \widetilde{b}_0\left( \mathcal{T}^{-1}_{y, \eta}(\widetilde{z}), \eta\right)\, d\widetilde{z} \\
III&=  -  \int_{|\widetilde{z}|=1}  \dfrac{ \widetilde{\mathbb{F}}(\widetilde{z}, \eta)}{e^{i \widetilde{\Phi}(\widetilde{z}, \eta)}- \widetilde{\mathbb{F}}(\widetilde{z}, \eta)} {R_{y, \eta}} \left( \widetilde{z} \,  {R_{y, \eta}}+ \left \langle y, y_0 \right \rangle+i \left \langle \eta, y_0 \right \rangle\right)^{-1} \\
&\qquad \qquad\qquad \qquad \qquad \qquad  \times  H(\widetilde{z}, \eta) \widetilde{b}_0\left( \mathcal{T}^{-1}_{y, \eta}(\widetilde{z}), \eta\right)\, d\widetilde{z}.
\end{align*}

We choose  $\widetilde{b_0}(\mathcal{T}_{y, \eta}^{-1}(\widetilde{z}), \eta)=\left( \widetilde{z} \,  {R_{y, \eta}}+ \left \langle y, y_0 \right \rangle+i \left \langle \eta, y_0 \right \rangle\right)^{i\lambda}$ with $\lambda>0$. We remark that this family of functions is analytic in the complex set $B\cap \Pi_{y, \eta}$ for all $(y, \eta)\in \partial_+ S(S^{n-1}_{>\beta^\prime})$, because $\lambda$ is real and the complex logarithm is well defined in the complex set $B\cap \Pi_{y, \eta}$ for all $(y, \eta)\in \partial_+ S(S^{n-1}_{>\beta^\prime})$. Taking into account \eqref{Z1x1}-\eqref{I_II_III} and \eqref{iner_until_infuzx} with \eqref{omh_li}, we obtain the following estimate for every $y\in \partial S^{n-1}_{>\beta^\prime}$:
\begin{align*}
|I| & \lesssim  \left | \log \left \| \Lambda_1^\sharp- \Lambda_2^\sharp \right \| \right |^{-t_0}\left\| H e^{-H} \right\|_{W^{1, 2p}\left(y^\perp\cap S^{n-1}; \, H^1(B_1(0)) \right)}\\
& =  \left | \log \left \| \Lambda_1^\sharp- \Lambda_2^\sharp \right \| \right |^{-t_0}\left\| { \widetilde{\Phi}}^{-1} \log \widetilde{\Phi}  \right\|_{W^{1, 2p}\left(y^\perp\cap S^{n-1}; \, H^1(B_1(0)) \right)}\\
& \lesssim  \left | \log \left \| \Lambda_1^\sharp- \Lambda_2^\sharp \right \| \right |^{-t_0}\left\| \widetilde{\Phi}  \right\|_{W^{1, \infty}\left(y^\perp\cap S^{n-1}; \, H^1(B_1(0)) \right)}  \lesssim  \left | \log \left \| \Lambda_1^\sharp- \Lambda_2^\sharp \right \| \right |^{-t_0},\\
|II| &\lesssim  \left\| H-i\widetilde{\Phi}\right\|_{L^{\infty}(S^1\times y^\perp\cap S^{n-1})} \lesssim  \left | \log \left \| \Lambda_1^\sharp- \Lambda_2^\sharp \right \| \right |^{-\frac{4\sigma t_0}{3(\sigma+5)}},\\
|III| &\lesssim  \left\|\widetilde{\mathbb{F}}/(e^{i \widetilde{\Phi}}- \widetilde{\mathbb{F}}) \right\|_{L^{\infty}(S^1\times y^\perp\cap S^{n-1})} \lesssim  \left | \log \left \| \Lambda_1^\sharp- \Lambda_2^\sharp \right \| \right |^{-\frac{4\sigma t_0}{3(\sigma+5)}}.
\end{align*}
We end the proof of Theorem \ref{remov_esti_phi_1} by combining Lemma \ref{new_delta_bar_estimate} and \eqref{new_anszastew_1}.
\end{appendix}

\section*{Acknowledgements} This work was completed during Leo Tzou's visit to Jyv\"aksyl\"a and the author wishes to thank Mikko Salo and University of Jyv\"aksyl\"a for their hospitality. Leo Tzou is partially supported by ARC DP190103302 and ARC DP190103451. L.P-M., \ was supported by the Academy of Finland (Centre of Excellence in Inverse Modelling and Imaging, grant numbers 312121 and 309963) and by the European Research Council under Horizon 2020 (ERC CoG 770924), and partially supported by Spanish government predoctoral grant BES-2012-058774 and project MTM2011-28198. A.R.,\ was supported by Spanish Grant MTM2017-85934-C3-2-P.

\end{document}